\theoremstyle{definition}
\theoremstyle{plain}
\newtheorem{theorem}{Theorem}[section]
\newtheorem{proposition}[theorem]{Proposition}
\newtheorem{lemma}[theorem]{Lemma}
\newtheorem{corollary}[theorem]{Corollary}
\newtheorem{GizThm}[theorem]{Gizatullin's Theorem}
\theoremstyle{definition}
\newtheorem{definition}[theorem]{Definition}
\newtheorem{definitions}[theorem]{Definitions}
\newtheorem{parag}[theorem]{}
\newtheorem{notation}[theorem]{Notation}
\newtheorem{notations}[theorem]{Notations}
\newtheorem{remark}[theorem]{Remark}
\theoremstyle{remark}
\newtheorem*{smallremark}{Remark}
\newenvironment{enumerata}%
{\begin{enumerate}

}{\end{enumerate}}
\newenvironment{Enumerata}[1]%
{\begin{enumerata}\setlength{\itemsep}{#1}}{\end{enumerata}}
\newcommand{\supp}{	\operatorname{{\rm supp}}}
\newcommand{\ord}{	\operatorname{{\rm ord}}}
\newcommand{\Sing}{	\operatorname{{\rm Sing}}}
\newcommand{\Div}{	\operatorname{{\rm Div}}}
\newcommand{\Bs}{	\operatorname{{\rm Bs}}}
\newcommand{\dom}{	\operatorname{{\rm dom}}}
\newcommand{\codom}{	\operatorname{{\rm codom}}}
\renewcommand{\div}{	\operatorname{{\rm div}}}
\newcommand{\exc}{	\operatorname{{\rm exc}}}
\newcommand{\setspec}[2]{\big\{\,#1\, \mid \,#2\, \big\}}
\newcommand{\Integ}{\ensuremath{\mathbb{Z}}}
\newcommand{\Nat}{\ensuremath{\mathbb{N}}}
\newcommand{\Reals}{\ensuremath{\mathbb{R}}}
\newcommand{\aff}{\ensuremath{\mathbb{A}}}
\newcommand{\proj}{\ensuremath{\mathbb{P}}}
\newcommand{\bk}{{\ensuremath{\rm \bf k}}}
\newcommand{\nagata}{\ensuremath{\mathbb{F}\hspace{-.7pt}}}
\newcommand{\bbL}{\ensuremath{\mathbb{L}}}
\newcommand{\bbM}{\ensuremath{\mathbb{M}}}
\newcommand{\Aeul}{\EuScript{A}}
\newcommand{\Beul}{\EuScript{B}}
\newcommand{\Eeul}{\EuScript{E}}
\newcommand{\Geul}{\EuScript{G}}
\newcommand{\Heul}{\EuScript{H}}
\newcommand{\bup}{\leftarrow}
\newcommand{\temb}{\tilde\nu_{\text{\rm emb}}}
\newcommand{\isom}{\cong}
\renewcommand{\epsilon}{\varepsilon}
\renewcommand{\phi}{\varphi}
\renewcommand{\emptyset}{\varnothing}
\begin{document}
\renewcommand{\baselinestretch}{1.07}

\newcommand{\DAno}{\cite{Dai_Mell_1}}
\newcommand{\DA}[1]{\cite[#1]{Dai_Mell_1}}


\title[Linear systems associated to unicuspidal rational plane curves]
{Linear systems associated to\\ unicuspidal rational plane curves}

\author{Daniel Daigle, Alejandro Melle Hern\'andez}

\date{\today}

\address{Department of Mathematics and Statistics\\ University of Ottawa\\ Ottawa, Canada\ \ K1N 6N5}
\email{ddaigle@uottawa.ca}

\address{ICMAT (CSIC-UAM-UC3M-UCM) Dept.\ of Algebra, 
Facultad de Ciencias Matem\'aticas, Universidad Complutense, 28040, Madrid, Spain}
\email{amelle@mat.ucm.es} 

\thanks{Research of the first author supported by grants RGPIN/104976-2010 from NSERC Canada
and SAB2006-0060 from the Spanish Ministry of Education and Science.}
\thanks{Research of the second author supported by the grants MTM2010-21740-C02-01 and
Grupo Singular CCG07-UCM/ESP-2695-921020.}

{\renewcommand{\thefootnote}{}
\footnotetext{2010 \textit{Mathematics Subject Classification.}
Primary: 14C20.  Secondary: 14J26.}}

\begin{abstract} 
Given a unicuspidal rational curve $C \subset \proj^2$ with singular point $P$, 
we study the unique pencil $\Lambda_C$ on $\proj^2$ satisfying
$C \in \Lambda_C$ and $\Bs(\Lambda_C)=\{P\}$.
We show that the general member of $\Lambda_C$ is a rational curve if and only if $\tilde\nu(C) \ge 0$,
where $\tilde\nu(C)$ denotes the self-intersection number of $C$ after the minimal resolution of
singularities.
We also show that if $\tilde\nu(C) \ge0$, then $\Lambda_C$ has a dicritical of degree $1$.
Note that all currently known unicuspidal rational curves $C \subset \proj^2$ satisfy $\tilde\nu(C) \ge0$.
\end{abstract}

\maketitle
 
\vfuzz=2pt


{\noindent\bf Introduction.} 

\medskip
A {\it unicuspidal rational curve\/} is a pair $(C,P)$ where $C$ is a curve
and $P\in C$ satisfies $C \setminus \{P\} \isom \aff^1$.  We call $P$ the distinguished point of $C$.

Let $C \subset \proj^2$ be a unicuspidal rational curve with distinguished point $P$.
In section 1 we define an infinite family of linear systems on $\proj^2$ determined by $(C,P)$ in a natural way.
We are particularly interested in two of these linear systems, denoted $\Lambda_C$ and $N_C$,
where $\Lambda_C$ is a pencil and $N_C$ is a net. 
In fact $\Lambda_C$ has the following characterization:
\begin{enumerate}

\item \it $\Lambda_C$ is the unique pencil on $\proj^2$ satisfying $C \in \Lambda_C$
and $\Bs( \Lambda_C ) = \{P\}$

\end{enumerate}
where $\Bs( \Lambda_C )$ denotes the base locus of $\Lambda_C$ on $\proj^2$.
The existence of this pencil was pointed out to us by A.\ Campillo and I.\ Luengo in a friendly conversation.
It appeared to us that it would be interesting to understand {\it how the properties of $C$ are related to
those of $\Lambda_C$}; this is the underlying theme of the present paper.

Given a curve $C \subset \proj^2$, let $\widetilde \proj^2 \to \proj^2$
be the minimal resolution of singularities of $C$
(this is the ``short'' resolution, not the ``embedded'' resolution);
let $\widetilde C \subset \widetilde \proj^2$ be the strict transform of $C$, 
and let ${\tilde \nu}(C)$ denote the self-intersection number of $\widetilde C$ on $\widetilde  \proj^2$.

For a unicuspidal rational curve $C \subset \proj^2$, we show
(cf.\ Theorems \ref{kjhggfqfdwfdqfssgiocxpoxzojcnnvb}, \ref{wedkkwejdewideideiiiiidss} and \ref{kjfkjeopozxpozx}):
\begin{enumerate}
\addtocounter{enumi}{1}

\item \it The general member of  $\Lambda_C$ is a rational curve if and only if $\tilde\nu(C) \ge 0$.

\item {\it The general member of  $N_C$ is a rational curve if and only if $\tilde\nu(C)>0$.}

\item \it If $\tilde\nu(C) \ge 0$ then $\Lambda_C$ has either $1$ or $2$ dicriticals,
and at least one of them has degree $1$.

\end{enumerate}

In view of these results, it is worth noting that
{\it all currently known unicuspidal rational curves $C \subset \proj^2$ satisfy $\tilde\nu(C) \ge 0$}. 
See \ref{8714rfhisfja} for details.

The proofs of the above statements (2) and (3) make use of results from \cite{Dai_Mell_1},
where we solved the following problem: given a curve $C$ on a rational nonsingular projective surface $S$,
find all linear systems $\bbL$ on $S$ satisfying $C \in \bbL$, $\dim\bbL \ge1$,
and the general member of $\bbL$ is a rational curve.

In statement (4) we claim, in particular, that
{\it if $\tilde\nu(C) \ge 0$ then $\Lambda_C$ has a dicritical of degree $1$} (see \ref{254desq3sa3235fa} for definitions).
It seems that the existence of such a dicritical is not an easy fact.
Indeed, the proof of this claim takes more than half of the present paper
(all of  sections \ref{SecAppendix} and \ref{ExistDicDeg1}). 
Note, however, that the graph theoretic tool developed in section \ref{SecAppendix} is 
susceptible of being useful in other settings.

\medskip
For a survey of open problems related to cuspidal rational plane curves,
the reader is referred to \cite{FLMN:Marseille2005}.

\bigskip
\noindent{\bf Acknowledgements.} 
The first author would like to thank the Department of Algebra of the Universidad Complutense de Madrid
for its hospitality.  The research which led to this article was initiated during a $7$ months stay of the first
author at that institution.

\bigskip
\noindent{\bf Conventions.} 
All algebraic varieties are over an algebraically closed field $\bk$ of characteristic zero.
Varieties (so in particular curves) are irreducible and reduced.
A divisor $D$ of a surface is {\it reduced\/} if $D = \sum_{i=1}^n C_i$ where $C_1, \dots, C_n$
are distinct curves ($n\ge0$). 
We write $e_Q(C)$ for the multiplicity of a point $Q$ on a curve $C$.

\section{Definition of $\Lambda_C$ and $N_C$}

A \textit{unicuspidal rational curve\/} is a pair $(C,P)$
where $C$ is a curve and $P$ is a point of $C$ such that $C \setminus \{P\} \isom \aff^1$.
We call $P$ the {\it distinguished point}, and we consider that the sentence
``$C$ is a unicuspical rational curve with distinguished point $P$''
is equivalent to ``$(C,P)$ is a unicuspical rational curve''.
We allow ourselves to speak of a 
unicuspidal rational curve $C$ without mentioning $P$,
but keep in mind that $C$ always comes equipped with a choice of a point $P$
(that choice being forced when $C \not\isom \proj^1$).

The aim of this section is to define,
given a unicuspidal rational curve $C \subset \proj^2$, 
an infinite family of linear systems $X_{\ell,j}(C)$ on $\proj^2$.
This is done in Proposition~\ref{qwqwoiwowiquwuyu}.
We are particularly interested in two of these linear systems, the pencil $\Lambda_C$ and the
net $N_C$, defined in \ref{dkfjasjdf;ajs;fa;o}--\ref{dkfjasjdf;ajs;fa;o-net}.

\begin{notations}  \label {378hfd83nf293}
Let $C \subset \proj^2$ be a unicuspical rational curve with distinguished point $P$.
If $D$ is an effective divisor in $\proj^2$, let 
$i_P(C, D)$ denote the local intersection number
of $C$ and $D$ at $P$ (which is defined to be $+\infty$ if
$C$ is a component of $D$).
Let $\Gamma = \Gamma_{(C,P)} \subseteq \Nat$ denote the semigroup of $(C,P)$,
i.e., the set of local intersection numbers
$i_P(C,D)$ where $D$ is an effective divisor such that
$C \not\subseteq \supp(D)$.
We also use the standard notation for intervals, $[a,b] = \setspec{ x \in \Reals }{ a \le x \le b }$.
\end{notations}

\begin{proposition}  \label {qwqwoiwowiquwuyu}
Let $C \subset \proj^2$ be a unicuspidal rational curve of degree $d$ and with distinguished point $P$.
For each pair $(\ell,j) \in \Nat^2$ such that $\ell>0$ and $j \le \ell d$,
let $X_{\ell,j}(C)$ be the set of effective divisors
$D$ of $\proj^2$ such that $\deg(D) = \ell$ and $i_P(C, D) \ge j$.
\begin{Enumerata}{1mm}

\item $X_{\ell,j}(C)$ is a linear system on $\proj^2$ for all $\ell,j$,
and $\dim X_{\ell,j}(C) \ge 1$ whenever $\ell \ge d$.

\item
For each $j\in\Nat$ such that $j\le d^2$,
the dimension of the linear system $X_{d,j}(C)$ is equal
to the cardinality of the set $[j,d^2] \cap \Gamma$,
where $\Gamma = \Gamma_{(C,P)}$.
In particular, for each integer $j$ such that $(d-1)(d-2) \le j \le d^2$,
$\dim X_{d,j}(C) = d^2-j+1$.
Consequently, $X_{d,d^2}(C)$ is a pencil and $X_{d,d^2-1}(C)$ is a net.

\end{Enumerata}
For each $\ell \in \Nat\setminus\{0\}$, define the abbreviation
$X_\ell(C) = X_{\ell,\ell d}(C)$.
Note that the above assertions imply that $X_d(C)$ is a pencil and that
$\dim X_\ell(C) \ge 1$ whenever $\ell \ge d$.
Moreover, if $\ell \in \Nat$ is such that $0< \ell < d$ then the following hold:
\begin{Enumerata}{1mm} \addtocounter{enumi}{2}

\item  \label {716161761716}
$X_\ell(C)$ contains at most one element
and if $X_\ell(C) \neq \emptyset$ then $\ell d \in \Gamma$.

\item 
$|\Gamma \cap [0, \ell d ]| \ge (\ell+1)(\ell+2)/2$,
and if equality holds and $\ell d \in \Gamma$ then $X_\ell(C) \neq \emptyset$.

\end{Enumerata}
\end{proposition}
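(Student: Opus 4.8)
The plan is to pass to the normalization $\pi\colon \proj^1 \to C$, which exists and is an isomorphism over $C\setminus\{P\}$ because $C$ is rational and unibranch at $P$; write $Q=\pi^{-1}(P)$ for the unique point over $P$. Since $\deg C = d$ one has $\pi^*\Oeul(1)\cong\Oeul_{\proj^1}(d)$, hence $\pi^*\Oeul(\ell)\cong\Oeul_{\proj^1}(\ell d)$, and pullback of sections gives a linear map $\pi^*\colon H^0(\proj^2,\Oeul(\ell)) \to V$, where $V := H^0(\proj^1,\Oeul_{\proj^1}(\ell d))$ has dimension $\ell d + 1$. The crucial translation is that for a degree-$\ell$ form $F$ whose zero divisor $D$ does not contain $C$ one has $i_P(C,D)=\ord_Q(\pi^*F)$; in particular $i_P(C,D)\in\Gamma$ by the very definition of $\Gamma$, and $i_P(C,D)\le C\cdot D = \ell d$ by B\'ezout. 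Setting $V_{\ge j}=\{s\in V : \ord_Q s\ge j\}$ and letting $\psi_{\ell,j}$ be the composite of $\pi^*$ with the projection $V\to V/V_{\ge j}$ onto $j$-jets at $Q$, one gets $X_{\ell,j}(C)=\proj(\ker\psi_{\ell,j})$ (a form with $C\subseteq\supp D$ lies in $\ker\pi^*\subseteq\ker\psi_{\ell,j}$, matching $i_P=+\infty$). This is visibly a linear system, which settles the structural half of part (a).

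The dimensions are then read off from the order filtration. Write $W_\ell=\image\pi^*$; since $\ker\pi^*$ consists of the degree-$\ell$ multiples of the defining form $F_C$ of $C$, one has $\dim W_\ell=\binom{\ell+2}{2}$ when $\ell<d$ and $\dim W_\ell=\binom{\ell+2}{2}-1$ when $\ell=d$. Because $V_{\ge j}/V_{\ge j+1}$ has dimension at most $1$, the function $j\mapsto\dim(W_\ell\cap V_{\ge j})$ drops by $0$ or $1$ at each step, so $\dim W_\ell$ equals the number of values $\ord_Q w$ attained by $w\in W_\ell\setminus\{0\}$; by the first paragraph this set of attained values is contained in $\Gamma\cap[0,\ell d]$. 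Tracking $\ker\pi^*$ gives $\dim X_{\ell,j}(C)=\dim(W_\ell\cap V_{\ge j})$ for $\ell=d$ and $\dim X_{\ell,j}(C)=\dim(W_\ell\cap V_{\ge j})-1$ for $\ell<d$, and the filtration identifies $\dim(W_\ell\cap V_{\ge j})$ with the number of attained values that are $\ge j$.

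Part (b) then hinges on a cardinality match, which I expect to be the heart of the argument. The genus formula for a rational cuspidal plane curve gives $\delta=(d-1)(d-2)/2$, and the semigroup of a plane branch is symmetric, so its conductor is $2\delta=(d-1)(d-2)\le d^2$; hence every integer $\ge (d-1)(d-2)$ lies in $\Gamma$ and $|\Gamma\cap[0,d^2]|=(d^2+1)-\delta=d(d+3)/2$, which is exactly $\dim W_d$. Since the attained orders form a subset of $\Gamma\cap[0,d^2]$ of the same finite cardinality, the a priori inclusion is forced to be an equality: the attained orders are all of $\Gamma\cap[0,d^2]$. Consequently $\dim X_{d,j}(C)=|\{\gamma\in\Gamma\cap[0,d^2]:\gamma\ge j\}|=|[j,d^2]\cap\Gamma|$. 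The ``in particular'' clause follows because $[(d-1)(d-2),d^2]\subseteq\Gamma$, and substituting $j=d^2$ and $j=d^2-1$ gives dimensions $1$ and $2$, i.e.\ the pencil and the net. The only subtle input here is the numerical coincidence $\dim W_d=|\Gamma\cap[0,d^2]|$, which rests on both the genus formula and the symmetry (Gorenstein property) of the branch semigroup.

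The remaining assertions concern $0<\ell<d$, together with the dimension bound in part (a). When $\ell<d$ no degree-$\ell$ divisor can contain $C$, so $i_P(C,D)\le\ell d$ by B\'ezout, and the requirement $i_P(C,D)\ge\ell d$ defining $X_\ell(C)$ forces $i_P(C,D)=\ell d\in\Gamma$; moreover $\dim V_{\ge\ell d}=1$ gives $\dim X_\ell(C)=\dim(W_\ell\cap V_{\ge\ell d})-1\le 0$, so $X_\ell(C)$ has at most one member, which proves part (c). For part (d), the attained orders form a subset of $\Gamma\cap[0,\ell d]$ of cardinality $\dim W_\ell=(\ell+1)(\ell+2)/2$, giving the inequality; if equality holds the attained orders exhaust $\Gamma\cap[0,\ell d]$, so the hypothesis $\ell d\in\Gamma$ places $\ell d$ among them and yields an $F$ with $\ord_Q(\pi^*F)=\ell d$, i.e.\ a member of $X_\ell(C)$. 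Finally, for the bound in part (a) it suffices to treat $j=\ell d$: the case $\ell=d$ is the pencil produced in part (b), and for $\ell>d$ the sub-system $\{C+D':\deg D'=\ell-d\}\subseteq X_{\ell,\ell d}(C)$ already has dimension $\binom{\ell-d+2}{2}-1\ge 1$.
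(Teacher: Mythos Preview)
Your proof is correct and follows essentially the same route as the paper. Both arguments pull back degree-$\ell$ forms to the normalization (the paper via a local parametrization $t\mapsto(x(t),y(t),1)$, you via the global map $\pi:\proj^1\to C$), identify $X_{\ell,j}(C)$ with the projectivization of the preimage of an order-filtration subspace, and pin down the dimensions through the same numerical coincidence $|\Gamma\cap[0,d^2]|=d(d+3)/2=\dim_\bk\bk[X,Y,Z]_d-1$, which both of you derive from the genus formula together with the symmetry (conductor $=2\delta$) of the branch semigroup.
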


\begin{smallremark}
The proof below is an elaboration of the proof
of Proposition~2 of \cite{FLMN:London2006};
moreover, the inequality in assertion~(d) is part of the cited result.
\end{smallremark}

\begin{smallremark}
$C \in X_{d,j}(C)$ for all $j$, because $i_P(C,C) = \infty > j$.
\end{smallremark}

\begin{proof}[Proof of \ref{qwqwoiwowiquwuyu}]
Choose coordinates $(X,Y,Z)$ for $\proj^2$ such that $P = (0:0:1)$.
Let $\bk[X,Y,Z]_\ell$ denote the vector space of homogeneous polynomials of 
degree $\ell$ and, given $G \in \bk[X,Y,Z]_\ell \setminus \{0\}$,
let $\div_0(G)$ be the effective divisor on $\proj^2$, of degree $\ell$,
with equation ``$G=0$''.
Let $F \in \bk[X,Y,Z]_d$ be an irreducible homogeneous 
polynomial of degree $d$ whose zero-set is $C$.
Let $x(t), y(t) \in t\bk[[t]]$ be a local parametrization of $C$ at $P$.
Then $F( x(t), y(t), 1 ) = 0$ and,
for any $\ell \in \Nat \setminus\{0\}$ and
$G \in \bk[X,Y,Z]_{ \ell } \setminus \{0\}$,
Bezout's Theorem gives
\begin{equation} \label{dkfjakjewf;kjaew}
\ord_t G( x(t), y(t), 1 )  = i_P(C , \div_0(G))
\begin{cases}
\in \Gamma \cap [0, \ell d],  &
\text{if $G \in \bk[X,Y,Z]_{ \ell } \setminus (F)$}, \\
= \infty, & \text{if $G \in \bk[X,Y,Z]_{ \ell } \cap (F)$}
\end{cases}
\end{equation}
where $(F)$ is the principal ideal of $\bk[X,Y,Z]$ generated by $F$.
Define a sequence of $\bk$-linear maps
$L_n : \bk[X,Y,Z] \to \bk$ (for $n \in \Nat$)
by the condition
$G( x(t), y(t), 1 ) = \sum_{n \in \Nat} L_n(G) t^n$ for any $G \in \bk[X,Y,Z]$.

Fix a pair $(\ell,j) \in \Nat^2$ such that $\ell \ge 1$ and $0 \le j \le \ell d$.
Consider the linear map of $\bk$-vector spaces
$$
T_\ell : \bk[X,Y,Z]_{\ell} \to \bk^{ | \Gamma \cap [0, \ell d ] | }, \quad
G \mapsto \big( L_{n_1}(G), \dots, L_{n_p}(G) \big),
$$
where $n_1 < \dots < n_p$ are the elements of $\Gamma \cap [0,\ell d]$, and define the subspace
$E_{\ell,j}$ of $\bk^{ | \Gamma \cap [0, \ell d ] | }$ by
$$
E_{\ell,j} =
\setspec{(0, \dots, 0,\lambda_1, \dots, \lambda_e)}
{ \lambda_1,\dots,\lambda_e \in \bk },
$$
where $e = | \Gamma \cap [j, \ell d] |$.
Note that \eqref{dkfjakjewf;kjaew} has the following two consequences:
firstly, $\ker T_\ell = \bk[X,Y,Z]_{ \ell } \cap (F)$, so
\begin{equation} \label{kfdjalfkkdkdkkdk}
\dim( \ker T_\ell ) = \begin{cases}
0, & \text{if $\ell < d$} \\
1, & \text{if $\ell = d$};
\end{cases}
\end{equation}
secondly,
\begin{multline*}
T_\ell^{-1}(E_{\ell,j}) \setminus \{0\}
= \setspec{ G \in \bk[X,Y,Z]_{ \ell } \setminus \{0\} }{ \ord_t G(x(t),y(t),1) \ge j } \\
= \setspec{ G \in \bk[X,Y,Z]_{ \ell } \setminus\{0\} }
{ i_P( C, \div_0(G) ) \ge  j },
\end{multline*}
so 
\begin{equation} \label{jii3i489234i3u498242jleih}
X_{\ell,j}(C)
= \setspec{ \div_0(G) }{ G \in T_\ell^{-1}(E_{\ell,j}) \setminus\{0\} } .
\end{equation}
In particular,
\begin{equation} \label{cvcoivocioijekjwekjwkejkwj}
\textit{$X_{\ell,j}(C)$ is a linear system of dimension }
\dim_\bk\big( T_\ell^{-1}(E_{\ell,j}) \big) - 1 .
\end{equation}
If $\ell \ge d$ then $\ker( T_\ell ) = \bk[X,Y,Z]_\ell \cap (F)$ has dimension equal to 
$\dim \bk[X,Y,Z]_{\ell - d} = \frac{(\ell-d)(\ell-d+3)}2 + 1$,
so 
$$
\dim X_{\ell,j}(C) = \dim T_\ell^{-1}(E_{\ell,j}) - 1 \ge \frac{(\ell-d)(\ell-d+3)}2.
$$
Hence, $\dim X_{\ell,j}(C) \ge 2$ whenever $\ell>d$, and 
$X_{\ell,j}(C) \neq \emptyset$ when $\ell=d$.  To finish the proof of assertion~(a),
we still need to show that  $\dim X_{\ell,j}(C) \ge 1$ when $\ell=d$. 

Consider the case $\ell = d$.
It is known that the number $\delta = (d-1)(d-2)/2$ satisfies
$2\delta + \Nat \subseteq \Gamma$ as well as
$\delta = | \Nat \setminus \Gamma |$.  As $2\delta < d^2$,
it follows that $d^2 + \Nat \subset \Gamma$ and 
$$
| \Gamma \cap [0, d^2] | = d^2+1-\delta = (d^2+3d)/2 = \dim_\bk \bk[X,Y,Z]_d - 1,
$$
so $\dim( \dom T_d ) = 1 + \dim( \codom T_d )$.
As $\dim( \ker T_d ) = 1$ by \eqref{kfdjalfkkdkdkkdk},
we obtain that $T_d$ is surjective and that (for any $j \le d^2$)
$\dim T_d^{-1}(E_{d,j}) = 1 + \dim E_{d,j} = 1 + | \Gamma \cap [j,d^2] |$,
so
\begin{equation} \label{dpiofupq39wejf}
\dim X_{d,j}(C) = | \Gamma \cap [j,d^2] |.
\end{equation}
As $d^2 \in \Gamma \cap [j,d^2]$, it follows in particular that $\dim X_{d,j}(C) \ge1$, which
finishes the proof of (a).
In the special case where $2\delta \le j \le d^2$ we have
$[j,d^2] \cap \Nat \subset \Gamma$, so \eqref{dpiofupq39wejf} gives
$$
\dim X_{d,j}(C) = d^2 - j + 1 .
$$
In particular $\dim X_{d,d^2}(C) = 1$ and $\dim X_{d,d^2-1}(C) = 2$,
so (b) is proved.

From now-on assume that $0 < \ell < d$.

Since $T_\ell$ is injective by \eqref{kfdjalfkkdkdkkdk}, and since
the definition of $E_{\ell, j}$ implies
\begin{equation} \label{difupq9wejf;laksduf}
\dim E_{\ell, \ell d} = | \Gamma \cap \{ \ell d \} | =
\begin{cases} 
1, & \text{if $\ell d \in \Gamma$,} \\
0, & \text{if $\ell d \notin \Gamma$,}
\end{cases} 
\end{equation}
we have $\dim T_\ell^{-1}(E_{\ell, \ell d}) \le 1$,
so \eqref{jii3i489234i3u498242jleih} implies that
$X_\ell(C) = X_{\ell,\ell d}(C)$ contains at most one element.
Moreover, if $X_\ell(C) \neq \emptyset$ then 
$\dim T_\ell^{-1}(E_{\ell, \ell d}) = 1$,
so $\dim E_{\ell, \ell d} = 1$ and \eqref{difupq9wejf;laksduf} implies
that $\ell d \in \Gamma$.
This proves (c).

To prove (d) note that the fact that $T_\ell$ is injective implies that
$\dim( \codom T_\ell ) \ge \dim( \dom T_\ell )$, i.e.,
\begin{equation} \label{ghqghqghqgqhgqhg}
|\Gamma \cap [0, \ell d ]| \ge (\ell+1)(\ell+2)/2.
\end{equation}
Suppose that equality holds in \eqref{ghqghqghqgqhgqhg};
then $T_\ell$ is bijective, and if we also assume that 
$\ell d \in \Gamma$ then
$\dim E_{\ell, \ell d} = 1$ by \eqref{difupq9wejf;laksduf},
so $T_\ell^{-1}( E_\ell )$ has dimension $1$ and
\eqref{jii3i489234i3u498242jleih} implies that 
$X_\ell(C) \neq \emptyset$.
This completes the proof of (d), and of the Proposition.
\end{proof}

\begin{definition} \label {dkfjasjdf;ajs;fa;o}
Let $C \subset \proj^2$ be a rational unicuspidal curve, with distinguished point $P$.
We define $\Lambda_C =  X_d(C)=X_{d,d^2}(C)$, where $d = \deg(C)$.
By \ref{qwqwoiwowiquwuyu}(b), $\Lambda_C$ is a pencil on $\proj^2$.
The definition of $X_{d,d^2}(C)$ and Bezout's Theorem yield the following explicit description of $\Lambda_C$:
$$
\Lambda_C = \{ C \} \cup
\setspec{D \in \Div( \proj^2 ) }{ \text{$D \ge 0$,\ \ $\deg(D) = \deg(C)$\ \ and\ \  $C \cap \supp(D) = \{P\}$} }.
$$
\end{definition}

The pencil $\Lambda_C$ can also be characterized as follows:

\begin{corollary} \label {dkjfslkdflks}
Let $C \subset \proj^2$ be a unicuspidal rational curve with distinguished point $P$.
Then $\Lambda_C$ is the unique pencil on $\proj^2$
satisfying $C \in \Lambda_C$ and $\Bs( \Lambda_C ) = \{P\}$.
\end{corollary}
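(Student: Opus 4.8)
The plan is to establish two things: that $\Lambda_C$ is one such pencil, and that it is the only one. For existence, I would note that $C \in \Lambda_C$ holds by the preceding remark (since $i_P(C,C) = \infty$), and that $\Lambda_C$ is a pencil by \ref{qwqwoiwowiquwuyu}(b). So the only real content on the existence side is to verify $\Bs(\Lambda_C) = \{P\}$. One inclusion is immediate: every member of $\Lambda_C = X_{d,d^2}(C)$ has local intersection number $\ge d^2$ with $C$ at $P$, so $P$ lies on every member (a divisor meeting $C$ with positive multiplicity at $P$ passes through $P$), whence $P \in \Bs(\Lambda_C)$. For the reverse inclusion, I would use the explicit description from \ref{dkfjasjdf;ajs;fa;o}: a general member $D$ of $\Lambda_C$ satisfies $C \cap \supp(D) = \{P\}$, so $\supp(C) \cap \supp(D) = \{P\}$; intersecting $C$ with two distinct members cannot produce a common point other than $P$, because any point of $\Bs(\Lambda_C)$ lies on $C$ (as $C \in \Lambda_C$) and also lies on every other member $D$, hence lies in $C \cap \supp(D) = \{P\}$. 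This pins down $\Bs(\Lambda_C) \subseteq \{P\}$, giving equality.

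For uniqueness, suppose $\Lambda'$ is any pencil on $\proj^2$ with $C \in \Lambda'$ and $\Bs(\Lambda') = \{P\}$. I want to show $\Lambda' = \Lambda_C$. The key numerical input is Bezout: if $D'$ is a member of $\Lambda'$ distinct from $C$, then $D'$ has degree $d = \deg C$ (all members of a pencil of plane curves are linearly equivalent, hence share the degree of $C$), and $C \cdot D' = d^2$ as a total intersection number. Since $\Bs(\Lambda') = \{P\}$, the curves $C$ and $D'$ meet only at $P$, so the entire intersection multiplicity is concentrated there: $i_P(C, D') = d^2$. Thus $D' \in X_{d,d^2}(C) = \Lambda_C$. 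This shows $\Lambda' \subseteq \Lambda_C$ as sets of divisors (the member $C$ is common to both), and since both are pencils — i.e.\ one-dimensional linear systems — a containment of the underlying divisor sets forces equality $\Lambda' = \Lambda_C$.

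The step I expect to be the main obstacle, or at least the one requiring the most care, is justifying that $C$ and $D'$ meet \emph{only} at $P$ so that the full Bezout number $d^2$ is realized as a \emph{local} intersection number at $P$. The hypothesis $\Bs(\Lambda') = \{P\}$ controls the common intersection of \emph{all} members simultaneously, but a priori a single other member $D'$ could meet $C$ at a point $Q \neq P$ that is not in the base locus. I would rule this out as follows: if $Q \in C \cap D'$ with $Q \neq P$, then since $C \setminus \{P\} \isom \aff^1$ and the pencil through $C$ and $D'$ sweeps out a rational family, the point $Q$ would impose a common point on a general member; more directly, one argues that any point of $C \cap D'$ other than $P$ must be a base point of the pencil generated by $C$ and $D'$, contradicting $\Bs(\Lambda') = \{P\}$. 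Concretely, choosing affine equations $F$ for $C$ and $G'$ for $D'$, the base locus of $\Lambda'$ is $\{F = 0\} \cap \{G' = 0\}$ as schemes, which contains $C \cap D'$ set-theoretically; hence $C \cap D' \subseteq \Bs(\Lambda') = \{P\}$, and the claim follows. Once this is secured, the remainder is the bookkeeping with Bezout and the dimension count for pencils described above.
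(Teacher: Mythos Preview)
Your proof is correct and follows essentially the same approach as the paper. Both arguments hinge on the observation that, for a pencil, any point in $\supp(D') \cap C$ is automatically a base point (since the pencil is generated by $C$ and $D'$), so $\supp(D') \cap C \subseteq \Bs(\Lambda') = \{P\}$; the paper then invokes the explicit description of $\Lambda_C$ from \ref{dkfjasjdf;ajs;fa;o} to conclude $D' \in \Lambda_C$, while you equivalently compute $i_P(C,D') = d^2$ via Bezout and use the definition $\Lambda_C = X_{d,d^2}(C)$ directly.
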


\begin{proof}
From the explicit description of $\Lambda_C$ given in \ref{dkfjasjdf;ajs;fa;o},
it is clear that $C \in \Lambda_C$ and $\Bs( \Lambda_C ) = \{P\}$.
To prove uniqueness, consider a pencil $\Lambda$ on $\proj^2$
such that $C \in \Lambda$ and $\Bs( \Lambda ) = \{P\}$.
Let $D$ be any element of $\Lambda$ other than $C$.
Then (since $\Lambda$ is a pencil)
any point of $\supp(D) \cap C$ is in fact a base point
of $\Lambda$; so $\supp(D) \cap C = \{ P \}$.
Using again the explicit description of $\Lambda_C$ given in \ref{dkfjasjdf;ajs;fa;o},
this gives $D \in \Lambda_C$.
This shows that $\Lambda \subseteq \Lambda_C$ and hence that $\Lambda = \Lambda_C$.
\end{proof}

\begin{definition} \label {dkfjasjdf;ajs;fa;o-net}
Let $C \subset \proj^2$ be a rational unicuspidal curve, with distinguished point $P$.
Define $N_C = X_{d,d^2-1}(C)$, where $d = \deg(C)$. By \ref{qwqwoiwowiquwuyu}, $N_C$ is a net.
Observe that $\Lambda_C \subset N_C$ and that
$$
\Bs( N_C ) = \begin{cases}
\{P\}, & \text{if $\deg C > 1$,} \\
\emptyset, & \text{if $\deg C = 1$.}
\end{cases}
$$
Also note that the linear systems $\Lambda_C$ and $N_C$ are primitive
(i.e., their general member is irreducible and reduced), because
$C$ is irreducible and reduced and is 
an element of each of them.
\end{definition}

\begin{smallremark}
We shall restrict ourselves to studying the pencil $\Lambda_C$ and the net $N_C$ associated to
a unicuspidal rational curve $C \subset \proj^2$,
but the other linear systems defined in Proposition~\ref{qwqwoiwowiquwuyu} also deserve some attention.
For instance, consider the set 
$S_C = \setspec{ \ell \in \Nat }{ 0 \le \ell < d \text{ and } X_\ell(C) \neq \emptyset }$, where $d = \deg(C)$.
Parts (c) and (d) of the above proposition indicate that $S_C$ is closely related to the semigroup $\Gamma_{(C,P)}$,
and one can see that $S_C$ is also related to the reducible elements of $\Lambda_C$.
Something interesting can be said about these relations, but this theme is not developed in this paper.
\end{smallremark}

\begin{smallremark}
The objects
$X_{\ell,j}(C)$, $X_\ell(C)$, $\Lambda_C$ and $N_C$ should really be
denoted $X_{\ell,j}(C,P)$, $X_\ell(C,P)$, $\Lambda_{C,P}$ and $N_{C,P}$,
as they depend on the choice of $P$ in the nonsingular case.
\end{smallremark}


\section{Preliminaries on $\proj^1$-rulings on rational surfaces}

In this section, $S$ is a rational nonsingular projective surface.

\begin{definition}  \label {kd545342100ko}
A pencil $\Lambda$ on $S$ is called a \textit{$\proj^1$-ruling\/}
if it is base-point-free and if its general member is isomorphic to $\proj^1$.
If $\Lambda$ is a $\proj^1$-ruling of $S$ then by a \textit{section\/} of
$\Lambda$ we mean an irreducible curve $\Sigma \subset S$
such that $\Sigma \cdot D =1$ for any $D \in \Lambda$ (it then follows
that $\Sigma \isom \proj^1$). 
\end{definition}

The following is a well-known consequence of the Riemann-Roch Theorem for $S$:

\begin{lemma} \label {dskjfwejf;akj}
If $C \subset S$ satisfies $C \isom \proj^1$ and $C^2 = 0$ then the
complete linear system $|C|$ on $S$ is a $\proj^1$-ruling.
\end{lemma}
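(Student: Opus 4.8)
The plan is to apply the Riemann–Roch theorem on the surface $S$ to the divisor $C$ and show first that $\dim|C| \ge 1$, and then that every member of the pencil generated by $C$ is an irreducible rational curve, so that $|C|$ is base-point-free with general member isomorphic to $\proj^1$. Since $S$ is a rational nonsingular projective surface, we have $h^1(S,\Oeul_S) = h^2(S,\Oeul_S) = 0$, and the Riemann–Roch formula reads $\chi(\Oeul_S(C)) = \chi(\Oeul_S) + \tfrac12 C\cdot(C-K_S)$, where $K_S$ is the canonical divisor. By adjunction, $C \isom \proj^1$ together with $C^2 = 0$ forces $C\cdot K_S = -2$, hence $\tfrac12 C\cdot(C-K_S) = \tfrac12(0+2) = 1$, and $\chi(\Oeul_S) = 1$ by rationality. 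Therefore $\chi(\Oeul_S(C)) = 2$.

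Next I would bound the higher cohomology. The term $h^2(S,\Oeul_S(C)) = h^0(S,\Oeul_S(K_S-C))$ by Serre duality; one checks this vanishes because $K_S-C$ cannot be effective on a rational surface (intersecting a putative effective representative with a suitable curve, or using that $K_S$ itself is not effective on a rational surface while $C$ is effective, yields a contradiction). With $h^2 = 0$ we get $h^0(S,\Oeul_S(C)) \ge \chi(\Oeul_S(C)) = 2$, so $\dim|C| \ge 1$ and $|C|$ is at least a pencil.

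Then I would analyze the members of $|C|$. Since $C^2 = 0$, the pencil $|C|$ has no base points outside $C$, and because $C^2 = 0$ one sees that $|C|$ is in fact base-point-free: a base point would force $C^2 > 0$ by the usual argument that two distinct members meeting at a point have positive intersection, contradicting $C\cdot C' = C^2 = 0$ for $C' \in |C|$ distinct from $C$. The general member $C'$ is algebraically equivalent to $C$, hence satisfies $(C')^2 = 0$ and $C'\cdot K_S = -2$; by adjunction its arithmetic genus is $p_a(C') = 1 + \tfrac12 C'\cdot(C'+K_S) = 0$. A base-point-free pencil whose general member has arithmetic genus $0$ and is irreducible (which holds for the general member, the pencil being primitive since $C$ is an irreducible member) gives a general member that is a nonsingular rational curve, i.e.\ isomorphic to $\proj^1$.

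The main obstacle I anticipate is the base-point-freeness step and the accompanying irreducibility of the general member: one must rule out that $|C|$ has base points or that its general member is reducible or nonreduced, all of which are controlled by the numerical conditions $C^2=0$ and $C\isom\proj^1$ through Zariski's theorem on the behavior of linear systems with self-intersection zero, together with the connectedness of members. Since this is cited as a well-known consequence of Riemann–Roch, I expect the intended proof to invoke these standard surface-theoretic facts rather than to prove them from scratch.
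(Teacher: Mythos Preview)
The paper does not actually prove this lemma; it simply records it as ``a well-known consequence of the Riemann--Roch Theorem for $S$'' and moves on. Your sketch is exactly the standard argument one has in mind for such a citation, and it is essentially correct.

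Two small points worth tightening. First, you establish $\dim|C|\ge 1$ but the definition of a $\proj^1$-ruling requires $|C|$ to be a \emph{pencil}, i.e., $\dim|C|=1$; this follows at once from your disjointness observation, since if $\dim|C|\ge 2$ one could choose two distinct members through any given point of $S$, contradicting $C\cdot C'=0$. Second, in your opening plan you say ``every member of the pencil \dots\ is an irreducible rational curve,'' which is too strong (a $\proj^1$-ruling may well have reducible singular fibers); you correctly retreat to the general member later, and that is all that is needed.
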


\begin{parag} \label {sdewffrefcd3}
Recall that, given $k \in \Nat$, there exists a triple
$(\nagata_k, \bbL_k, \Delta_k )$ where
$\nagata_k$ is a nonsingular projective rational surface,
$\bbL_k$ is a base-point-free pencil on $\nagata_k$
each of whose elements is a projective line,
and $\Delta_k$ is a section of $\bbL_k$ satisfying $\Delta_k^2 = -k$.
Moreover, $(\nagata_k, \bbL_k, \Delta_k )$ is uniquely determined by $k$
up to isomorphism.
The surface $\nagata_k$ is called the Nagata-Hirzebruch ruled surface
of degree $k$.
\end{parag}

\begin{parag}  \label{difupawejk}
By an {\it SNC-divisor\/} of $S$ we mean a divisor $D = \sum_{i=1}^n C_i$
where $C_1, \dots, C_n$ ($n \ge0$) are distinct curves on $S$ and:
\begin{itemize}

\item each $C_i$ is a nonsingular curve;

\item for every choice of $i \neq j$ such that $C_i \cap C_j \neq \emptyset$,
$C_i \cap C_j$ is one point and the local intersection number
of $C_i$ and $C_j$ at that point is equal to $1$;

\item if $i,j,k$ are distinct then $C_i \cap C_j \cap C_k = \emptyset$.

\end{itemize}
The {\it dual graph\/} of an SNC-divisor $D=\sum_{i=1}^n C_i$ of $S$
is the weighted graph defined by stipulating that the vertex set is $\{ C_1, \dots, C_n \}$,
that distinct vertices $C_i$, $C_j$ are joined by an edge if
and only if $C_i \cap C_j \neq \emptyset$, and that the weight of the vertex $C_i$
is the self-intersection number $C_i^2$.
\end{parag}

For the following fact, see for instance \cite[Chap.~2, 2.2]{Miy_RatUnirat} or  \cite[Sec.~2]{Giz70}.

\begin{GizThm} \label{jhdhsgghgaqaqoaiaaqo}
Let $\Lambda$ be a $\proj^1$-ruling on $S$.
Then $\Lambda$ has a section and the following hold:
\begin{enumerata}

\item Let $D \in \Lambda$.
Then each irreducible component of $D$ is
a projective line and $\supp(D)$ is the support of an SNC-divisor of $S$
whose dual graph is a tree.
If $\supp(D)$ is irreducible then $D$ is reduced.
If $\supp(D)$ is reducible then there exists a $(-1)$-component $\Gamma$
of $\supp(D)$ which meets at most two other components of $\supp(D)$;
moreover, if $\Gamma$ has multiplicity $1$ in the divisor $D$ then
there exists another $(-1)$-component
of $\supp(D)$ which meets at most two other components of $\supp(D)$.

\item Let $\Sigma$ be a section of $\Lambda$.
Then there exist a nonsingular projective surface $\nagata$ and
a birational morphism $\rho : S \to \nagata$ satisfying:
\begin{itemize}

\item the exceptional locus of $\rho$ is the union of
the irreducible curves $C \subset S$
which are $\Lambda$-vertical\footnote{A curve $C \subset S$ is said to be
\textit{$\Lambda$-vertical\/} if it is included in
the support of an element of $\Lambda$.}
and disjoint from $\Sigma$;

\item the linear system $\bbL = \rho_*( \Lambda )$ is a base-point-free
pencil on $\nagata$ each of whose elements is a projective line,
and the curve $\Delta = \rho( \Sigma )$ is a section of $\bbL$;

\item $\nagata = \nagata_k$ for some $k \in \Nat$;
moreover, if $\Sigma^2 \le 0$ then $\Sigma^2 = -k$ and
$(\nagata, \bbL, \Delta ) = (\nagata_k, \bbL_k, \Delta_k )$.

\end{itemize}
\end{enumerata}
\end{GizThm}

\section{Rational linear systems; uniresolvable curves and linear systems}

We continue to assume that $S$ is a rational nonsingular projective surface.

\begin{definition}  \label {983r9daweifhado}
We say that a linear system $\bbL$ on $S$ is {\it rational\/}
if $\dim\bbL \ge 1$ and the general member of $\bbL$ is an irreducible rational curve.
\end{definition}

\begin{notations} \label{minimalresolsing}
Given a curve $C \subset S$,
consider the minimal resolution of singularities $X \to S$ of $C$,
let $\tilde C$ be the strict transform of $C$ on $X$,
and let $\tilde\nu(C)$ denote the self-intersection number of $\tilde C$ in $X$.
When $\tilde\nu(C) \ge 0$ (resp.\ $\tilde\nu(C)>0$), we say that $C$ is {\it of nonnegative type\/}
(resp.\ {\it of positive type}).
We also consider the embedded  resolution of singularities $Y \to S$ of $C$,
and define $\temb(C)$ to be the self-intersection number of the strict transform of $C$ on $Y$.
Clearly, $\temb(C) \le \tilde\nu(C)$.
\end{notations}

\medskip
Let $C \subset S$ be a curve.
It follows from \DA{Theorem 2.8} that
the existence of a rational pencil $\Lambda$ on $S$ satisfying $C \in \Lambda$ 
is equivalent to $C$ being rational and of nonnegative type.
Let us now be more precise in the special case where $C$ is ``uniresolvable''.

\begin{definitions} \label {ehwgehgwhepcopdopsdopsdo}
Consider a sequence
\begin{equation} \label {xixoixiixixuxixoix}
S = S_0 \xleftarrow{\ \pi_1\ } S_1 \xleftarrow{\ \pi_2\ } \cdots
\xleftarrow{\ \pi_n\ } S_{n}
\end{equation}
where $\pi_i : S_i \to S_{i-1}$ is the blowing-up of the nonsingular
surface $S_{i-1}$ at a point $P_i \in S_{i-1}$ (for $1 \le i \le n$).
\begin{enumerata}

\item We say that the sequence \eqref{xixoixiixixuxixoix} is a 
\textit{chain\/} if $\pi_{i-1}(P_i) = P_{i-1}$ for all $i$ such that $2\le i \le n$.

\item A linear system $\bbL$ on $S$ is \textit{uniresolvable\/}
if $\dim\bbL\ge1$, $\bbL$ is without fixed components
and there exists a  chain \eqref{xixoixiixixuxixoix} with the property
that the strict transform of $\bbL$ on $S_n$ is base-point-free.

\item A curve $C \subset S$ is \textit{uniresolvable\/}
if there exists a chain \eqref{xixoixiixixuxixoix}
with the property that the strict transform of $C$ on $S_n$
is a nonsingular curve.

\end{enumerata}
\end{definitions}

Note that if $C \subset S$ is uniresolvable then there exists at least one point $P \in C$
such that $\Sing(C) \subseteq \{P\}$.

\begin{theorem}  \label {dkfjalskdjfias777}
Let $C \subset S$ be a uniresolvable curve and let $P \in C$ be such that $\Sing(C) \subseteq \{P\}$.
Then the following are equivalent:
\begin{enumerata}

\item $C$ is rational and of nonnegative type;

\item there exists a rational linear system $\bbL$ on $S$ satisfying $C \in \bbL$;

\item there exists a rational and uniresolvable pencil $\Lambda$
on $S$ such that $C \in \Lambda$ and $\Bs(\Lambda) \subseteq \{P\}$.

\end{enumerata}
\end{theorem}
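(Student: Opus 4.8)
The plan is to prove the equivalence by establishing the implications $(a) \Rightarrow (c) \Rightarrow (b) \Rightarrow (a)$, so that the strong conclusion $(c)$ falls out naturally in the middle. The implication $(c) \Rightarrow (b)$ is immediate, since a rational pencil is in particular a rational linear system, so no work is needed there. For $(b) \Rightarrow (a)$, I would invoke the cited result \DA{Theorem 2.8}, which (as noted in the excerpt just before the theorem) says that the existence of a rational pencil through $C$ is equivalent to $C$ being rational and of nonnegative type. Since I have a rational linear system $\bbL$ of dimension $\ge 1$ through $C$, I can extract a rational pencil from it by intersecting with a general hyperplane of $\bbL$ (a general sub-pencil of $\bbL$ still has an irreducible rational general member, provided $\bbL$ is primitive / without relevant fixed components), thereby reducing to the pencil case and applying the cited theorem. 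The minor care needed is to ensure the general member of a sub-pencil is still irreducible rational, which follows from Bertini-type considerations together with primitivity.

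The substantive implication is $(a) \Rightarrow (c)$: from the hypothesis that $C$ is rational and of nonnegative type, I must \emph{construct} a rational pencil $\Lambda$ that is additionally uniresolvable and whose base locus is contained in the single point $P$. The idea is to use uniresolvability of $C$ to pass to a chain of blowups $S = S_0 \leftarrow \cdots \leftarrow S_n$ centered over $P$ so that the strict transform $\tilde C$ of $C$ on $S_n$ is nonsingular. Because $C$ is of nonnegative type, the self-intersection $\tilde\nu(C) = \tilde C^2 \ge 0$ at the \emph{minimal} resolution; I would arrange (possibly by further blowups within the chain, which only increases or leaves controlled the relevant numbers) that on $S_n$ the strict transform is a nonsingular rational curve with self-intersection exactly $0$. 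Once I have a nonsingular rational curve $\tilde C$ with $\tilde C^2 = 0$ on the rational nonsingular surface $S_n$, Lemma \ref{dskjfwejf;akj} tells me that the complete linear system $|\tilde C|$ is a $\proj^1$-ruling on $S_n$. Pushing this ruling back down the chain via the birational morphism $S_n \to S$ yields a pencil $\Lambda$ on $S$ with $C \in \Lambda$; its general member is rational (image of a general member of a $\proj^1$-ruling), it is uniresolvable by construction (the chain resolves it), and its base points all lie over $P$, hence $\Bs(\Lambda) \subseteq \{P\}$.

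The main obstacle will be the bookkeeping in $(a) \Rightarrow (c)$ needed to guarantee simultaneously that (i) the strict transform can be made to have self-intersection exactly $0$ on some $S_n$ reached by a \emph{chain} (all blowups centered at points lying over $P$, in the sense of Definition \ref{ehwgehgwhepcopdopsdopsdo}(a)), and (ii) that the resulting ruling descends to a pencil whose base locus is genuinely confined to $P$ rather than spreading to other points of $S$. The delicate point is reconciling the \emph{minimal} resolution (which gives the invariant $\tilde\nu(C) \ge 0$) with the need to reach self-intersection $0$: if $\tilde\nu(C) > 0$ one overblows to bring the self-intersection down to $0$, but each such blowup must be performed at a point of the strict transform lying over $P$ so that the chain condition and the base-locus condition $\Bs(\Lambda) \subseteq \{P\}$ are preserved. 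I expect this reduction to $\tilde C^2 = 0$ along a chain over $P$ to be the crux, and I would handle it by an explicit descending induction on $\tilde C^2$, blowing up a point of $\tilde C$ at each stage and checking that nonsingularity and rationality of $\tilde C$ persist while $\tilde C^2$ drops by one.
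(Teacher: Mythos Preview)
Your proposal is correct and matches the paper's argument essentially line for line: the paper also derives $(a)\Leftrightarrow(b)$ from \DA{Theorem~2.8} (invoking it directly for linear systems, so your reduction to a sub-pencil is unnecessary but harmless), notes $(c)\Rightarrow(b)$ is trivial, and proves $(a)\Rightarrow(c)$ by exactly your construction---blow up along a chain over $P$ until the strict transform is smooth with self-intersection $0$, apply Lemma~\ref{dskjfwejf;akj} to get a $\proj^1$-ruling, and push it down. Your ``main obstacle'' paragraph correctly identifies the only bookkeeping needed, and the paper handles it just as you suggest, by blowing up further points on the strict transform lying over $P$.
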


\begin{proof}
It follows from \DA{Theorem 2.8} that (a) is equivalent to (b), and it is clear that (c) implies (b);
so it suffices to prove that (a) implies (c).  Assume that (a) is satisfied.
Then there exists a chain \eqref{xixoixiixixuxixoix} satisfying:
\begin{itemize}

\item the strict transform $C_n \subset S_n$ of $C$ is nonsingular and satisfies $C_n^2=0$;

\item $P_1=P$ and, for each  $i\ge2$, $P_i$ lies on the strict transform $C_{i-1} \subset S_{i-1}$ of $C$.

\end{itemize}
By \ref{dskjfwejf;akj}, $|C_n|$ is a $\proj^1$-ruling on $S_n$.
Define $\Lambda = \pi_* | C_n |$, where $\pi = \pi_1 \circ \cdots \circ \pi_n : S_n \to S_0$.
Then $\Lambda$ is a rational pencil on $S$ satisfying $C\in\Lambda$.
The strict transform of $\Lambda$ on $S_n$ is $|C_n|$, which is base-point-free.
This has two consequences: (i) all infinitely near base points of $\Lambda$ are among $\{P_1, \dots, P_n\}$,
so in particular $\Bs(\Lambda) \subseteq \{P\}$; (ii) since \eqref{xixoixiixixuxixoix} is a chain,
$\Lambda$ is uniresolvable.
\end{proof}

Let us also mention the following related fact:

\begin{lemma} \label{goigoigoigoieawwaw}
Let $\Lambda$ be a pencil on $S$ and $C \subset S$ an irreducible component
of the support of some member of $\Lambda$.
If $\Lambda$ is rational and uniresolvable, then $C$ is rational and uniresolvable.
\end{lemma}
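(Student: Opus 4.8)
The plan is to reduce everything to Gizatullin's Theorem by passing to a surface on which $\Lambda$ becomes base-point-free. Since $\Lambda$ is uniresolvable, there is a chain \eqref{xixoixiixixuxixoix}, say $S = S_0 \xleftarrow{\pi_1} S_1 \leftarrow \cdots \xleftarrow{\pi_n} S_n$, such that the strict transform $\Lambda_n$ of $\Lambda$ on $S_n$ is base-point-free; write $\pi = \pi_1 \circ \cdots \circ \pi_n : S_n \to S$, and let $C_n \subset S_n$ be the strict transform of $C$. My first goal is to show that $\Lambda_n$ is a $\proj^1$-ruling on $S_n$, and my second goal is to identify $C_n$ as a $\Lambda_n$-vertical curve, so that Gizatullin's Theorem~\ref{jhdhsgghgaqaqoaiaaqo}(a) forces $C_n \isom \proj^1$.

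For the first goal, I would note that the strict transform induces a bijection between the members of $\Lambda$ and those of $\Lambda_n$, compatible with the parametrization by $\proj^1$; since $\Lambda$ has no fixed components, the general member $M_n$ of $\Lambda_n$ is simply the strict transform of a general member $M$ of $\Lambda$. Because $\Lambda$ is rational, $M$ is an irreducible rational curve, hence so is its birational transform $M_n$. As $\Lambda_n$ is base-point-free and $S_n$ is a nonsingular surface over a field of characteristic zero, Bertini's theorem shows that $M_n$ is nonsingular, and a nonsingular rational curve is isomorphic to $\proj^1$. Thus $\Lambda_n$ is base-point-free with general member isomorphic to $\proj^1$, i.e.\ a $\proj^1$-ruling in the sense of Definition~\ref{kd545342100ko}.

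For the second goal, I would fix a member $D \in \Lambda$ whose support contains $C$, and let $D_n \in \Lambda_n$ be its strict transform. The rational map $f : S \dashrightarrow \proj^1$ defined by $\Lambda$ (which exists since $\Lambda$ has no fixed components, so $\Bs(\Lambda)$ is finite) is generically constant on $C$ with value $[D]$: through a general point of $C$, which is not a base point, passes a unique member of $\Lambda$, and that member is $D$ since $C \subset \supp(D)$. Lifting $f$ to the morphism $\tilde f : S_n \to \proj^1$ determined by $\Lambda_n$, I get that $C_n$ lies in the fiber $\tilde f^{-1}([D]) = \supp(D_n)$, so $C_n$ is an irreducible component of $\supp(D_n)$, and in particular $C_n$ is $\Lambda_n$-vertical. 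Gizatullin's Theorem~\ref{jhdhsgghgaqaqoaiaaqo}(a) then yields $C_n \isom \proj^1$.

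To finish, $C$ is birational to $C_n \isom \proj^1$, so $C$ is rational; and the chain \eqref{xixoixiixixuxixoix} itself witnesses the uniresolvability of $C$, since the strict transform $C_n$ of $C$ on $S_n$ is nonsingular. The two goals are routine once the framework is in place; the step I expect to require the most care is the verification that $\Lambda_n$ is a \emph{genuine} $\proj^1$-ruling rather than merely a base-point-free rational pencil, which is precisely where Bertini's theorem and the characteristic-zero hypothesis are used, together with the bookkeeping confirming that the strict transform of $C$ is a component of a member of $\Lambda_n$.
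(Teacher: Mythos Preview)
Your proof is correct and follows essentially the same approach as the paper: pass to a chain resolving the base points of $\Lambda$, observe that the resulting pencil $\Lambda_n$ is a $\proj^1$-ruling, and apply Gizatullin's Theorem~\ref{jhdhsgghgaqaqoaiaaqo}(a) to the vertical curve $C_n$. The paper's version is terser---it asserts directly that the general member of $\Lambda_n$ is a $\proj^1$ and that $C_n$ lies in some member of $\Lambda_n$---whereas you spell out the Bertini step and the verticality argument via the rational map, but the underlying strategy is identical.
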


\begin{proof}

Consider the minimal resolution \eqref{xixoixiixixuxixoix} of the base points of $\Lambda$;
since $\Lambda$ is uniresolvable, \eqref{xixoixiixixuxixoix} is a chain.
Let $\Lambda_n$ (resp.\ $C_n$) be the strict transform of $\Lambda$ (resp.\ of $C$) on $S_n$.
As $\Lambda$ is rational,
the general member of $\Lambda_n$ is isomorphic to $\proj^1$,
so $\Lambda_n$ is a $\proj^1$-ruling.
As $C_n$ is included in the support of some element of $\Lambda_n$,
Gizatullin's Theorem~\ref{jhdhsgghgaqaqoaiaaqo}
implies that $C_n$ is nonsingular and rational.
So $C$ is rational and 
(since \eqref{xixoixiixixuxixoix} is a chain) uniresolvable.
\end{proof}

\section{Rationality of $\Lambda_C$ and $N_C$}

Given a unicuspidal rational curve $C \subset \proj^2$
we consider the pencil $\Lambda_C$ 
and the net $N_C$ defined in  \ref{dkfjasjdf;ajs;fa;o},
and ask when these linear systems are rational (in the sense of \ref{983r9daweifhado}).

\begin{theorem} \label {kjhggfqfdwfdqfssgiocxpoxzojcnnvb}
For a unicuspidal rational curve $C \subset \proj^2$, the following are equivalent:
\begin{enumerata}

\item $C$ is of nonnegative type

\item $\Lambda_C$ is rational.

\end{enumerata}
Moreover, if these conditions hold then $\Lambda_C$ is uniresolvable.
\end{theorem}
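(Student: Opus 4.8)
The theorem asserts the equivalence of two conditions for a unicuspidal rational curve $C\subset\proj^2$: (a) $C$ is of nonnegative type, i.e.\ $\tilde\nu(C)\ge0$; and (b) $\Lambda_C$ is rational. It also adds that when these hold, $\Lambda_C$ is uniresolvable. The plan is to deduce this from the general machinery already assembled in the excerpt, principally Theorem~\ref{dkfjalskdjfias777} and Corollary~\ref{dkjfslkdflks}, rather than to argue from scratch.

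**The key observation.** First I would check that $C$ is uniresolvable with $\Sing(C)\subseteq\{P\}$, so that Theorem~\ref{dkfjalskdjfias777} applies with the distinguished point $P$. Since $C$ is unicuspidal, $\Sing(C)\subseteq\{P\}$ is immediate, and uniresolvability follows because the singularity at $P$ is a cusp whose resolution is achieved by a chain of blowing-ups (each center lying on the strict transform of $C$, the defining property of a chain in~\ref{ehwgehgwhepcopdopsdopsdo}(a)). With this in hand, Theorem~\ref{dkfjalskdjfias777} gives the equivalence of three conditions for the \emph{abstract} surface $S=\proj^2$: (a$'$) $C$ is rational and of nonnegative type; (b$'$) some rational linear system contains $C$; and (c$'$) there is a rational, uniresolvable pencil $\Lambda$ with $C\in\Lambda$ and $\Bs(\Lambda)\subseteq\{P\}$. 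Note that $C$ is automatically rational, so (a$'$) reduces exactly to condition (a) of the present theorem.

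**Bridging to $\Lambda_C$.** The heart of the argument is to identify the abstract pencil $\Lambda$ produced by Theorem~\ref{dkfjalskdjfias777}(c$'$) with the specific pencil $\Lambda_C$. The plan is as follows. For the direction (a)$\Rightarrow$(b): assuming (a), apply Theorem~\ref{dkfjalskdjfias777} to obtain a rational uniresolvable pencil $\Lambda$ with $C\in\Lambda$ and $\Bs(\Lambda)\subseteq\{P\}$. If $\Bs(\Lambda)=\emptyset$ then $\Lambda$ would be base-point-free with $C\in\Lambda$; but $C$ is singular (assuming $\deg C>1$; the line case $C\isom\proj^1$ is trivial and $\Lambda_C$ is a genuine base-point-free pencil of lines), and a general member of a base-point-free pencil meeting a fixed singular member must pass through its singular point, forcing $P\in\Bs(\Lambda)$, a contradiction. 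Hence $\Bs(\Lambda)=\{P\}$, and Corollary~\ref{dkjfslkdflks} identifies $\Lambda=\Lambda_C$. Thus $\Lambda_C$ is rational (and uniresolvable), giving (b) and the final assertion simultaneously. For the converse (b)$\Rightarrow$(a): if $\Lambda_C$ is rational, then it is a rational linear system containing $C$, so condition (b$'$) of Theorem~\ref{dkfjalskdjfias777} holds, whence (a$'$), i.e.\ $C$ is of nonnegative type.

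**Anticipated obstacle.** The delicate point is the identification step, specifically verifying $\Bs(\Lambda)=\{P\}$ exactly (not strictly smaller) so that Corollary~\ref{dkjfslkdflks}'s uniqueness applies. The corollary requires $\Bs(\Lambda)=\{P\}$ on the nose; I must rule out $\Bs(\Lambda)=\emptyset$. The cleanest way is to argue that $\Lambda$, being uniresolvable and having $C$ as a singular member, cannot be base-point-free: by Gizatullin's Theorem~\ref{jhdhsgghgaqaqoaiaaqo}(a) the members of a $\proj^1$-ruling (which is what $\Lambda$ becomes after resolution) have SNC support with tree dual graph, and in particular each member is reduced-with-nice-crossings; a base-point-free pencil on $\proj^2$ has lines as members (by degree considerations), which cannot include a singular curve of degree $>1$. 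I would therefore handle the degenerate case $\deg C=1$ separately at the outset and, for $\deg C>1$, invoke the fact that $C$ being a singular member forces the resolution to be nontrivial, hence $\Bs(\Lambda)\ne\emptyset$ and so equals $\{P\}$. This is the one place where care is genuinely needed; the remaining implications are formal consequences of the cited results.
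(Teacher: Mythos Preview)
Your approach is essentially the paper's: both directions go through Theorem~\ref{dkfjalskdjfias777}, and for (a)$\Rightarrow$(b) the pencil $\Lambda$ produced there is identified with $\Lambda_C$ via the uniqueness in Corollary~\ref{dkjfslkdflks}. The paper's proof is exactly this, compressed to a few lines.

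Your ``anticipated obstacle'' is, however, much easier than you make it, and your treatment of it contains small errors. On $\proj^2$ every pencil of degree $d\ge1$ has nonempty base locus, since any two distinct members intersect in $d^2>0$ points by B\'ezout; thus $\Bs(\Lambda)\subseteq\{P\}$ forces $\Bs(\Lambda)=\{P\}$ immediately, with no case distinction and no use of the singularity of $C$. This is why the paper simply writes ``then $\Bs(\Lambda)=\{P\}$'' without further comment. By contrast, your parenthetical claim that for $\deg C=1$ the pencil $\Lambda_C$ is ``a genuine base-point-free pencil of lines'' is incorrect (it is the pencil of lines through $P$, with base point $P$), and your argument that ``a general member of a base-point-free pencil meeting a fixed singular member must pass through its singular point'' is confused: in a base-point-free pencil distinct members are disjoint, so there is no such meeting. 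None of this affects the overall correctness of your plan, but you should replace the obstacle paragraph with the one-line B\'ezout observation.
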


\begin{proof}
The fact that (b) implies (a) follows from either one of \DA{2.8} or \ref{dkfjalskdjfias777}.
Conversely, suppose that (a) holds and let $P$ be the distinguished point of $C$.
Then, in particular, $C$ is uniresolvable and $P \in C$ is such that $\Sing(C) \subseteq \{P\}$.
By \ref{dkfjalskdjfias777},
there exists a rational and uniresolvable pencil $\Lambda$
on $\proj^2$ such that $C \in \Lambda$ and $\Bs(\Lambda) \subseteq \{P\}$;
then $\Bs(\Lambda) = \{P\}$.
By \ref{dkjfslkdflks}, $\Lambda_C$ is the unique pencil on $\proj^2$
satisfying $C \in \Lambda_C$ and $\Bs( \Lambda_C ) = \{P\}$.
Thus $\Lambda=\Lambda_C$.
Consequently, $\Lambda_C$ is rational and uniresolvable.
\end{proof}

\begin{remark}  \label {8714rfhisfja}
In view of \ref{kjhggfqfdwfdqfssgiocxpoxzojcnnvb}, it is interesting to note:
\begin{enumerata}

\item \it All unicuspidal rational curves $C \subset \proj^2$ satisfying 
$\bar\kappa( \proj^2 \setminus C ) < 2$ are of nonnegative type.

\item \it All currently known unicuspidal rational curves $C \subset \proj^2$ are of nonnegative type.

\end{enumerata}
Indeed, let $C \subset \proj^2$ be a unicuspidal rational curve and consider 
$\bar\kappa = \bar\kappa( \proj^2 \setminus C )$, the logarithmic Kodaira dimension of 
$\proj^2 \setminus C$.
Then it is a priori clear that $\bar\kappa \in \{ -\infty, 0, 1, 2\}$.
\begin{itemize}

\item If $\bar\kappa = -\infty$ then \cite{MiS} implies that $\temb(C) \ge-1$,
and it follows that $\tilde\nu(C)>0$.

\item The case $\bar{\kappa}=0$ cannot occur by a result of Tsunoda \cite{Tsu1}.

\item The case $\bar{\kappa}=1$ is completely classified in \cite{KeitaTono}, and the multiplicity
sequences are given explicitly.
A straightforward computation using these sequences shows that $\tilde\nu(C) \in \{0,1\}$, where the two cases occur.

\item The case $\bar{\kappa}=2$ is not classified. 
The only known examples here are two families of curves found by Orevkov in \cite{Or}.
For these examples the multiplicity sequences are known explicitly, and 
a straightforward computation shows that $\tilde\nu(C) \in \{1,4\}$ where the two cases occur.

\end{itemize}
One should also remark that the sets
\begin{gather*}
\setspec{ \tilde\nu(C) }{ C \subset \proj^2 \text{ cuspidal rational } }\\
\setspec{ \temb(C) }{ C \subset \proj^2 \text{ unicuspidal rational, $\bar\kappa(\proj^2 \setminus C)=1$ } }
\end{gather*}
are not bounded below, as can be deduced from \cite{Fenske1} and \cite{KeitaTono}, respectively. 
\end{remark}

The next paragraph will be used as a reference, when we want
to establish the notation:

\begin{notations}  \label  {sdopfiuqpweijdfklsmdn897823487}
Let $C \subset \proj^2$ be a unicuspidal rational curve with distinguished point $P$.
Then $(C,P)$ determines an infinite sequence
\begin{equation}  \label{diufpq3poaksdjfi8888}
\proj^2 = S_0 \xleftarrow{\ \pi_1\ } S_1 \xleftarrow{\ \pi_2\ }
S_2 \xleftarrow{\ \pi_3\ } \cdots
\end{equation}
of nonsingular projective surfaces and blowing-up morphisms such that,
for each $i\ge1$,
$\pi_i : S_i \to S_{i-1}$ is the blowing-up
of $S_{i-1}$ at the unique point $P_i \in S_{i-1}$ which lies on
the strict transform of $C$ and which is mapped to $P_1=P$
by $\pi_1 \circ \cdots \circ \pi_{i-1} : S_{i-1} \to S_0$.
Let $E_i = \pi_i^{-1}(P_i) \subset S_i$ and, if $i<j$,
let the strict transform of $E_i$ on $S_j$ be also denoted by $E_i \subset S_j$.
Let $C_i \subset S_i$ be the strict transform of $C_0 = C$ on $S_i$,
and let $\Lambda_i$ be the strict transform of $\Lambda_0 = \Lambda_C$ on $S_i$.
By definition of the sequence \eqref{diufpq3poaksdjfi8888}, it is clear that
\begin{equation} \label{dedsedwsewdwededswewd}
\textit{$C_{i-1} \cap E_{i-1} = \{ P_i \}$ in $S_{i-1}$,\ \ for all $i\ge2$.}
\end{equation}
Let $n \le N$ be the natural numbers satisfying:
\begin{itemize}

\item $S_n \to S_0$ is the minimal resolution of singularities of $C$;

\item $S_N \to S_0$ is the minimal embedded resolution of singularities of $C$.

\end{itemize}
Finally, let $r_i = e_{P_i}(C_{i-1})$ for all $i \ge1$, and let $d=\deg(C)$.
Then the invariants $\tilde\nu(C)$ and $\temb(C)$
defined in \ref{minimalresolsing} are given by
$$
\tilde\nu(C) = C_n^2 = d^2-\sum_{i=1}^n r_i^2
\quad \text{and} \quad  \temb(C) = C_N^2.
$$
It is clear that if $C$ is singular then $N=n+r_n$ and hence
 $\temb(C)= \tilde\nu(C) - r_n$, and that
if $C$ is nonsingular (i.e., $d \le 2$)
then $N=n=0$ and $\temb(C)= \tilde\nu(C)=d^2$.
\end{notations}  

\begin{smallremark}
If $\tilde\nu(C)\ge0$, the natural number $m$ defined in \ref{dif4938998r984j20w2j} (below)
is to be added to the set of notations introduced in \ref{sdopfiuqpweijdfklsmdn897823487}. 
Note that the inequality $n \le \min(N,m)$ always holds,
and that the three cases $m<N$, $m=N$ and $m>N$ can occur.
\end{smallremark}

\begin{proposition}  \label {dif4938998r984j20w2j}
Let $C \subset \proj^2$ be a unicuspidal rational curve with distinguished point $P$, and let the
notation be as in \ref{sdopfiuqpweijdfklsmdn897823487}.
If $C$ is of nonnegative type, then the following hold.
\begin{Enumerata}{1mm}

\item  There exists a natural number $m \ge n$ such that 
$S_m \to S_0$ is the minimal resolution of the base points of $\Lambda_C$.

\item  \label {fdfdffdfdfd5g}
$C_i \in \Lambda_i$ for all $i \in \{ 0, \dots,  m \}$.

\item $\Lambda_m$ is a $\proj^1$-ruling of $S_m$ (cf.~\ref{kd545342100ko}). 

\item $C_m \isom \proj^1$ and $C_m^2 = 0$.

\item \label {ghghghghtytr}
For all $i \in \{ 1, \dots, m \}$, the following hold in $S_m$:
$$
\textit{$E_i$ is horizontal $\iff E_i \cap C_m \neq \emptyset \iff P_{m+1} \in E_i$.}
$$
Here we say that a curve in $S_m$ is \emph{vertical} if it is included in the
support of a member of $\Lambda_m$, and \emph{horizontal} if it is not vertical.

\item \label {hghshsghsghshgs}
$E_m$ is horizontal and at most one $i < m$ is such that $E_i \subset S_m$ is horizontal.

\item \label{positive-char}
$E_m$ is a section of $\Lambda_m$ if and only if $C$ is of positive type.

\end{Enumerata}
\end{proposition}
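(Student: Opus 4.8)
The plan is to analyze Proposition~\ref{dif4938998r984j20w2j} as a whole, building the needed structure through parts (a)--(f) before attacking the final characterization (g). The backbone of the argument is the factorization machinery of Gizatullin's Theorem~\ref{jhdhsgghgaqaqoaiaaqo} applied to the $\proj^1$-ruling $\Lambda_m$, together with the chain structure of the blowing-up sequence \eqref{diufpq3poaksdjfi8888} and the invariant identity $\tilde\nu(C) = C_n^2 = d^2 - \sum_{i=1}^n r_i^2$.

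\textbf{Parts (a)--(d).} Since $C$ is of nonnegative type, Theorem~\ref{kjhggfqfdwfdqfssgiocxpoxzojcnnvb} tells us that $\Lambda_C$ is rational and uniresolvable. Uniresolvability means the minimal resolution of the base points of $\Lambda_C$ is achieved by a \emph{chain}, and since $\Bs(\Lambda_C)=\{P\}$ by \ref{dkjfslkdflks} and the proof of \ref{dkfjalskdjfias777} shows all infinitely near base points lie over $P$ on the strict transform of $C$, this chain must coincide with an initial segment of \eqref{diufpq3poaksdjfi8888}. This gives the number $m$ in (a); the inequality $m\ge n$ follows because $S_n$ (the minimal resolution of $C$) is dominated by $S_m$. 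First I would show (b) by downward induction: $\Lambda_m$ is base-point-free so $C_m\in\Lambda_m$, and pushing forward along each $\pi_i$ (a chain blow-up) preserves membership, giving $C_i\in\Lambda_i$. Then (c) is immediate since $\Lambda_m$ is base-point-free with rational general member, hence a $\proj^1$-ruling by definition; and (d) follows because $C_m$ is the strict transform of the smooth rational curve $C_n$ (itself smooth by minimality of the resolution) and, being a general member of the $\proj^1$-ruling $\Lambda_m$, satisfies $C_m\isom\proj^1$ and $C_m^2=0$. One must check $C_m=C_n$, i.e.\ no further blow-ups from $S_n$ to $S_m$ touch $C$; this holds because $C_n^2=\tilde\nu(C)\ge0$ already makes $|C_n|$ a $\proj^1$-ruling and the base points being resolved lie off $C_n$.

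\textbf{Parts (e)--(f).} For (e), the equivalences are essentially tautological consequences of the chain structure: $E_i$ is horizontal iff $E_i$ meets the general member $C_m$, and since the base points are resolved exactly at $S_m$, the point $P_{m+1}$ (the next center) lies on whichever exceptional curve still meets $C_m$; I would unwind the definitions and use \eqref{dedsedwsewdwededswewd}. For (f), that $E_m$ is horizontal uses that $P_{m+1}\in E_m$ (the last blow-up center lies on the last exceptional curve, by the chain condition and minimality of $m$); the ``at most one $i<m$'' claim is the crux and uses Gizatullin~\ref{jhdhsgghgaqaqoaiaaqo}(a): the support of a reducible member of the $\proj^1$-ruling is a tree of lines, and a section meets it in exactly one point, constraining how many exceptional curves can be horizontal. \textbf{The hard part will be} (f) and the section-analysis feeding (g), since counting horizontal components requires tracking the tree/dual-graph combinatorics of the degenerate fibers against the sections supplied by Gizatullin's factorization.

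\textbf{Part (g), the main statement.} With the structure above, a curve $\Sigma$ is a \emph{section} of $\Lambda_m$ iff $\Sigma\cdot D=1$ for all $D\in\Lambda_m$; applied to $\Sigma=E_m$ and $D=C_m$, this is the condition $E_m\cdot C_m=1$. The plan is to compute $E_m\cdot C_m$ in terms of the multiplicity sequence and relate it to $\temb(C)$ versus $\tilde\nu(C)$. By \ref{sdopfiuqpweijdfklsmdn897823487}, $C$ is of positive type iff $\tilde\nu(C)>0$, and $\temb(C)=\tilde\nu(C)-r_n$ in the singular case; the embedded resolution is reached at $S_N$ with $N=n+r_n$, while $S_m$ resolves the base points of $\Lambda_C$. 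I would argue that $E_m$ meets $C_m$ transversally in a single point precisely when the last infinitely near base point is a simple point of the fiber, and that this transversality is equivalent to $C_m^2=0$ being achieved ``with room to spare,'' i.e.\ $\tilde\nu(C)>0$ rather than $\tilde\nu(C)=0$. Concretely, when $\tilde\nu(C)=0$ the general member $C_m$ has $E_m\cdot C_m\ge 2$ (or $E_m$ fails to be a section by the Gizatullin tree constraint), whereas $\tilde\nu(C)>0$ forces $E_m\cdot C_m=1$. I expect the delicate point to be the borderline intersection computation distinguishing $E_m\cdot C_m=1$ from $E_m\cdot C_m\ge2$, which is exactly where the hypothesis $\tilde\nu(C)>0$ (as opposed to $\ge0$) enters, and to close this I would invoke \ref{jhdhsgghgaqaqoaiaaqo}(b) to exhibit $E_m$ as the image-defining section of the ruled-surface structure precisely in the positive-type case.
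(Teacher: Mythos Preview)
Your architecture is right but there are two genuine errors and a missed simplification. In (d) you claim that ``no further blow-ups from $S_n$ to $S_m$ touch $C$'' and that $|C_n|$ is already a $\proj^1$-ruling because $C_n^2=\tilde\nu(C)\ge0$. Both are false. By the very definition of the sequence~\eqref{diufpq3poaksdjfi8888}, \emph{every} center $P_i$ lies on $C_{i-1}$, so the blow-ups between $S_n$ and $S_m$ do touch $C$; indeed $r_i=1$ for $i>n$ gives $C_m^2=C_n^2-(m-n)$, hence $m-n=\tilde\nu(C)$, which is positive exactly in the positive-type case. And $|C_n|$ is a $\proj^1$-ruling only when $C_n^2=0$. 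The paper's argument for (d) is simply that $C_m\in\Lambda_m$ and $\Bs(\Lambda_m)=\emptyset$ force $C_m^2=0$, while $m\ge n$ makes $C_m$ nonsingular. Relatedly, your (b) (``$\Lambda_m$ is base-point-free so $C_m\in\Lambda_m$'') is a non-sequitur: membership of a specific curve in the strict transform of a pencil is not automatic. The paper invokes the external input \DA{2.7(b)} to obtain $\widetilde C\in\widetilde\Lambda_C$ on the minimal base-point resolution, and this is what pins the resolution to the sequence~\eqref{diufpq3poaksdjfi8888} and starts (b).

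For (g) you are reaching for the wrong tools: there is no need for $\temb$, for Gizatullin~\ref{jhdhsgghgaqaqoaiaaqo}(b), or for fiber combinatorics. The whole argument is one line: $E_m$ is a section iff $E_m\cdot C_m=1$; but $E_m\cdot C_m=e_{P_m}(C_{m-1})$, so this holds iff $C_{m-1}$ is nonsingular, and since $C_{m-1}^2=C_m^2+r_m^2>0$ this is exactly $\tilde\nu(C)>0$. Likewise for (f): rather than Gizatullin's tree structure, use (e) directly. By (e), $E_i$ is horizontal iff $P_{m+1}\in E_i$; since $\sum_i E_i$ is an SNC-divisor of $S_m$, at most two of the $E_i$ pass through any given point, and one of those through $P_{m+1}$ is $E_m$.
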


\begin{proof}
Let $S = Y_0 \xleftarrow{\ \rho_1\ } Y_1 \xleftarrow{\ \rho_2\ } \cdots \xleftarrow{\ \rho_m\ } Y_{m}$
be the minimal resolution of the base points of $\Lambda_C$,
where, for $1 \le i \le m$, $\rho_i : Y_i \to Y_{i-1}$ is the blowing-up of the nonsingular
surface $Y_{i-1}$ at a point $P^*_i \in Y_{i-1}$.
As $C$ is of nonnegative type,
\ref{kjhggfqfdwfdqfssgiocxpoxzojcnnvb} implies that $\Lambda_C$ is rational.
Let $\widetilde C \subset Y_m$ (resp.\ $\widetilde \Lambda_C$) be the strict transform of $C$
(resp.\ of $\Lambda_C$) on $Y_m$.
By \DA{2.7(b)}, the fact that $\Lambda_C$ is rational implies that 
$\widetilde C \in \widetilde \Lambda_C$ and that $\widetilde C$ is nonsingular.
From $\widetilde C \in \widetilde \Lambda_C$, we deduce that for each $i$ the base point $P_i^*$ lies on
the strict transform of $C$ on $Y_{i-1}$; as $P_i^*$ is infinitely near $P$ (because $\Bs( \Lambda_C ) = \{P\}$),
it follows that $(P_1^*, \dots, P_m^*) = (P_1, \dots, P_m)$.
Thus $S_m \to S_0$ is the minimal resolution of the base points of $\Lambda_C$.
As we have observed, $\widetilde C = C_m$ is nonsingular; it follows that $m \ge n$, so (a) is proved.

Then $\Bs( \Lambda_{i-1} ) = \{ P_i \}$ for all $i \in \{1, \dots, m \}$, and $\Bs( \Lambda_{m} ) = \emptyset$.

We already noted that 
$\widetilde C \in \widetilde \Lambda_C$, which we may rewrite as $C_m \in \Lambda_m$. 
It follows that assertion~(b) holds.
As $\Lambda_C$ is a rational pencil, so is $\Lambda_m$;
as $\Lambda_m$ is base-point-free, its general member is a $\proj^1$, so (c) holds.
By $C_m \in \Lambda_m$ and $\Bs(\Lambda_m) = \emptyset$, we get $C_m^2=0$, so
assertion~(d) holds.

The fact that $C_m \in \Lambda_m$
and that $\Lambda_m$ is base-point-free implies that if $C' \subset S_m$ is a
curve distinct from $C_m$ then $C'$ is horizontal if and only if
$C' \cap C_m \neq \emptyset$.  In particular, (e) is proved, and (f) immediately follows.

To prove (g), note that
$E_m$ is a section of $\Lambda_m$ if and only if $E_m\cdot C_m=1$,
if and only if $C_{m-1}$ is nonsingular;
as $C_{m-1}^2 > C_m^2=0$, this is equivalent to $C$ being of positive type.
\end{proof}

\begin{theorem}    \label{wedkkwejdewideideiiiiidss}
For a unicuspidal rational curve $C \subset \proj^2$,
the following are equivalent:
\begin{Enumerata}{1mm}

\item $C$ is of positive type;

\item $N_C$ is rational;

\item the rational map $\Phi_{N_C} : \proj^2 \dasharrow \proj^2$, corresponding
to the net $N_C$, is birational.

\end{Enumerata}
Moreover, if the above conditions hold then
the Cremona map $\Phi_{N_C}$ transforms $C$ into a line,
and $\Lambda_C$ into a pencil of ``all lines through some point''.
\end{theorem}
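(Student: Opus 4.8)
The plan is to reduce the whole statement to a single integer attached to the resolution of $\Lambda_C$. Throughout I adopt the notation of \ref{sdopfiuqpweijdfklsmdn897823487}; assuming $C$ of nonnegative type I also use \ref{dif4938998r984j20w2j}, so that $S_m \to \proj^2$ is the minimal resolution of $\Bs(\Lambda_C)$, $\Lambda_m=|C_m|$ is a $\proj^1$-ruling, and $E_m$ is the last exceptional curve. I set $k=E_m\cdot C_m$. Since $E_m\cdot C_m$ equals the multiplicity $r_m$ of $C_{m-1}$ at $P_m$, part~\ref{positive-char} of \ref{dif4938998r984j20w2j} gives $k=1$ exactly when $C$ is of positive type (equivalently $m>n$), whereas $\tilde\nu(C)=0$ forces $m=n$ and $k=r_m=r_n\ge 2$. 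The two numbers I will compute are $\deg\Phi_{N_C}$ and the geometric genus of a general member of $N_C$, and I will show they equal $k$ and $k-1$ respectively.

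The heart of the argument is to pin down the multiplicity profile of a general member $D$ of $N_C$ along the tower \eqref{diufpq3poaksdjfi8888}. Since $D\in N_C$ and $D\ne C$, Bézout gives $C\cdot D=d^2$ while $i_P(C,D)=d^2-1$, so $C$ and $D$ meet at one further point transversally and $D$ is as tangent to $C$ at $P$ as is possible without lying in $\Lambda_C$. I expect to show that this forces $D$ to share with $C$ the points $P_1,\dots,P_{m-1}$ with the same multiplicities $r_1,\dots,r_{m-1}$, to pass through $P_m$ with multiplicity $r_m-1$, and then to follow the branch of $C$ through $r_m-1$ further free points; one checks directly that this profile realises $i_P(C,D)=d^2-1$. \emph{Establishing that the general member has precisely this minimal profile is the main obstacle}, and is where the real work lies — either by a dimension count using the linear maps $T_\ell$ of \ref{qwqwoiwowiquwuyu}, or by invoking the structure results of \DAno. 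Granting the profile, the strict transform $\widetilde D$ of $D$ on the resolution of $N_C$ has self-intersection $\widetilde D^2=d^2-\big(\sum_{i<m}r_i^2+(r_m-1)^2+(r_m-1)\big)=r_m=k$ (using $\sum_{i\le m}r_i^2=d^2$, i.e. $C_m^2=0$), and the plane-curve genus formula gives $p_g(D)=\binom{d-1}{2}-\big(\sum_{i<m}\binom{r_i}{2}+\binom{r_m-1}{2}\big)=\binom{r_m}{2}-\binom{r_m-1}{2}=k-1$, where I have used that $C$ is rational, i.e. $\sum_{i\le m}\binom{r_i}{2}=\binom{d-1}{2}$.

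From here the equivalences follow quickly. If $C$ is not of nonnegative type then all of (a),(b),(c) fail: $\tilde\nu(C)<0$ excludes (a); were $N_C$ rational, a general pencil in $N_C$ through $C$ would be a rational pencil containing $C$, forcing $C$ to be of nonnegative type by \DA{Theorem 2.8}, so (b) fails; and since the preimage of a general line under a birational $\Phi_{N_C}$ is an irreducible rational curve, (c) implies (b) and hence (c) fails as well. So I may assume $C$ of nonnegative type and use the computation above. Because $N_C$ is primitive (\ref{dkfjasjdf;ajs;fa;o-net}) its general member is irreducible, hence $\Phi_{N_C}$ is dominant of degree $\widetilde D^2=k$; thus $\Phi_{N_C}$ is birational iff $k=1$, which is (a)$\Leftrightarrow$(c). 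Likewise the general member of $N_C$ is an irreducible curve of geometric genus $k-1$, hence rational iff $k=1$, giving (a)$\Leftrightarrow$(b).

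Finally, for the \emph{moreover} I make the $k=1$ case explicit. Here $m>n$, so the profile simply omits $P_m$: on $S_{m-1}$ the strict transform of $N_C$ is a base-point-free net whose members are linearly equivalent to the smooth rational curve $C_{m-1}$ with $C_{m-1}^2=1$, and the induced morphism $S_{m-1}\to\proj^2$ is birational. Consequently $C$, being a member of this net, is carried to a line, while the subpencil $\Lambda_C$ — which on $S_{m-1}$ still has the single base point $P_m$ — is carried to the pencil of all lines through the image of $P_m$. (Equivalently, one applies Gizatullin's Theorem~\ref{jhdhsgghgaqaqoaiaaqo} to the ruling $\Lambda_m$ with section $E_m$, contracting $S_m$ to $\nagata_1$, and then blows down $\Delta_1$ to recover $\proj^2$.) This gives the stated description of the Cremona transformation $\Phi_{N_C}$.
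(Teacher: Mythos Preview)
Your strategy is genuinely different from the paper's, and the difference is exactly where your gap sits. You reduce everything to the single integer $k=E_m\cdot C_m=r_m$ via an explicit \emph{multiplicity profile} for a general $D\in N_C$: multiplicities $r_1,\dots,r_{m-1},r_m-1$ at $P_1,\dots,P_m$, then $r_m-1$ further simple points along the branch of $C$. You yourself flag that establishing this profile is ``the main obstacle'' and proceed by \emph{granting} it. That is the gap, and it is not a formality. The condition $i_P(C,D)=d^2-1$ gives a single linear relation $\sum_Q e_Q(C)\,e_Q(D)=d^2-1$ over infinitely near points; it does not by itself force $e_{P_i}(D)=r_i$ for $i<m$, nor does it force the transform of $D$ to pass through the specific points $P_{m+1},\dots,P_{m+r_m-1}$ on the branch of $C$ rather than through other infinitely near points. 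A dimension count via the maps $T_\ell$ of \ref{qwqwoiwowiquwuyu} would have to be carried out carefully to show that your profile is the generic one and that $D$ acquires no further singularity (you also silently need $\Sing(D)\subseteq\{P\}$ for your genus computation, which is Bertini but should be said). Absent this, both the equality $\deg\Phi_{N_C}=k$ and the equality $p_g(D)=k-1$ are unproved.

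The paper sidesteps this entirely. For (b)$\Rightarrow$(a) it quotes \DA{2.8}\,(e,f): any rational linear system through $C$ is contained in $\bbL_C$, and $\dim\bbL_C=\tilde\nu(C)+1$; so $N_C$ rational forces $\tilde\nu(C)\ge\dim N_C-1=1$. For (a)$\Rightarrow$(c) and the ``moreover'', instead of computing the profile of $N_C$, the paper \emph{constructs} the Cremona map first: from \ref{dif4938998r984j20w2j}\eqref{positive-char} and Gizatullin's Theorem it contracts $S_m$ to $\nagata_1$ along vertical curves disjoint from the section $E_m$, then blows down the $(-1)$-section to $\proj^2$, obtaining a birational $\Phi:\proj^2\dasharrow\proj^2$. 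Only \emph{after} building $\Phi$ does it check, by pushing forward the complete linear system of lines, that the net determined by $\Phi$ coincides with $N_C$ (and that the sub-pencil of lines through a point pushes forward to $\Lambda_C$). In other words, the paper never needs to know the base scheme of $N_C$ in advance; your approach tries to read it off directly, which is where the unproved step lies.
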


\begin{proof}
The fact that (c) implies (b) is trivial.
If (b) holds then parts (e) and (f) of \DA{2.8} imply that $N_C \subseteq \bbL_C$
and that $\dim\bbL_C = \tilde\nu(C)+1$, so $\tilde\nu(C) > 0$, showing that (b) implies (a).
There remains to show that if (a) holds then 
$\Phi_{N_C}$ is birational and transforms $C$ into a line and
$\Lambda_C$ into a pencil of all lines through some point.

Suppose that $C$ is of positive type and let the
notation be as in~\ref{sdopfiuqpweijdfklsmdn897823487} and \ref{dif4938998r984j20w2j}.
By \ref{dif4938998r984j20w2j}\eqref{positive-char}, $E_m$ is a section of $\Lambda_m$.
Then Gizatullin's Theorem~\ref{jhdhsgghgaqaqoaiaaqo} implies that there exists a birational
morphism $\rho : S_m \to \nagata_1$ whose exceptional locus $\exc(\rho) \subset S_m$ 
is a union of $\Lambda_m$-vertical curves in $S_m$ and $\exc(\rho) \cap E_m = \emptyset$.
Moreover, in the notation of~\ref{sdewffrefcd3}, $\rho_*( \Lambda_m )$ is the standard ruling $\bbL_1$
of $\nagata_1$ and $\rho(E_m)$ is the $(-1)$-section of that ruling.
As the exceptional locii of the two morphisms
$S_{m-1} \xleftarrow{ \pi_{m} } S_m \xrightarrow{ \rho } \nagata_1$
are disjoint, we have the commutative diagram
$$
\xymatrix{
\proj^2 = S_0 \ar @{<-} [r]^{\pi_1} & \ \cdots\ 
& S_{m-1}  \ar @{<-} [r]^{\pi_m} \ar [l]_{\pi_{m-1}} \ar[d]^{\bar\rho}
&  S_m \ar[d]^{\rho}  \\
& & \proj^2  &  \nagata_1 \ar[l]_{\bar\pi_m}
}
$$
where $\bar\pi_m : \nagata_1 \to \proj^2$ is the contraction of $\rho(E_m)$.
Define the birational map \mbox{$\Phi : \proj^2 \dashrightarrow \proj^2$} as
the composition
$$
\xymatrix{
S_0 \ar @{.>}[rr]^{ (\pi')^{-1} } && S_{m-1} \ar[r]^{ \bar\rho } & \proj^2,
}
$$
where $\pi' = \pi_1 \circ \cdots \circ \pi_{m-1} : S_{m-1} \to S_0$.
It is clear that $\Phi$ transforms $C$ into a line in $\proj^2$.
Also, $\Phi$ determines a net $N$ on $\proj^2$ (without fixed components);
let us show that $N = N_C$.

Consider the group homomorphisms
$$
\Div( S_0 ) \xleftarrow{\ \pi'_*\ } \Div( S_{m-1} )
\xleftarrow{\ \bar\rho^*\ } \Div( \proj^2 )
$$
where $\bar\rho^*$ is the operation of taking the total transform with respect
to $\bar\rho$ and $\pi'_*$ takes direct image with respect to $\pi'$.
Let $Q = \bar\rho( P_m ) \in \proj^2$ and
let $\bbL$ be the linear system on $\proj^2$ consisting of 
all lines through $Q$.
Then the strict transform of $\bbL$ on $S_{m-1}$ (via $\bar \rho$)
is $\Lambda_{m-1}$.
As $\bar\rho$ restricts
to an isomorphism from a neighborhood of $P_m$ to a neighborhood of $Q$
(because $\exc(\rho) \cap E_m = \emptyset$),
the strict transform of $\bbL$ coincides with the total transform of $\bbL$,
so $\bar\rho^*$ transforms $\bbL$ into $\Lambda_{m-1}$ and consequently
$\pi'_* \circ \bar\rho^*$ transforms $\bbL$ into $\Lambda_C$.
Now we note that 
$\pi'_* \circ \bar\rho^*$ transforms $\bbM$ into $N$,
where $\bbM$ is the linear system of all lines in $\proj^2$.
As $\bbL \subset \bbM$, it follows that $\Lambda_C \subset N$ (in particular
the elements of $N$ have degree $d = \deg C$).

Let $\bbM^\circ$ be the set of $M \in \bbM$ such that
$Q \notin M$ and $\bar\rho^{-1}(M)$ is an irreducible curve in $S_{m-1}$.
Then the image of 
$\bbM^\circ$ via $\pi'_* \circ \bar\rho^*$ is a dense subset of $N$.
Since $N$ and $N_C$ have the same dimension, in order to show
that $N=N_C$ it suffices to show that
$\pi'_* \circ \bar\rho^*$ maps $\bbM^\circ$ into $N_C$.
Let $M \in \bbM^\circ$ and consider the curve
$D = (\pi'_* \circ \bar\rho^*)(M) = \pi'( \bar\rho^{-1}(M) ) \subset S_0$.

Let $L = \bar\rho( C_{m-1} ) \in \bbL$ and note that 
$\bar \rho$ restricts to an isomorphism
from a neighbourhood of $C_{m-1}$ to a neighbourhood of $L$.
As $(M \cdot L)_{\proj^2} =1$ and the point $M \cap L$ is not $Q$,
it follows that 
$$
(\bar\rho^{-1}(M) \cdot C_{m-1})_{ S_{m-1} } = 1
$$
and that the point 
$\bar\rho^{-1}(M) \cap C_{m-1} = \{R\}$ belongs to $C_{m-1}\setminus P_{m}$,
so $R \notin \exc( \pi' )$.
Consequently, $D \cap C \subseteq \{ \pi'(R), P \}$ and
$i_{\pi'(R)}(D , C) =1$, where the point $\pi'(R)$ is distinct from $P$.
By Bezout, $i_P(D , C) = d^2-1$, so $D \in N_C$.
This shows that $\pi'_* \circ \bar\rho^*$ maps $\bbM^\circ$ into $N_C$;
it follows that $N=N_C$, as desired.

So $\Phi_{N_C} = \Phi$ and consequently $\Phi_{N_C}$ is birational.
We already noted that $\Phi$ transforms $C$ into a line and that 
$\pi'_* \circ \bar\rho^*$ transforms $\bbL$ into $\Lambda_C$, so the last assertions follow.
\end{proof}

\section{Intermezzo: erasable weighted pairs}
\label {SecAppendix}

The aim of this section is to prove Proposition~\ref{656758765876dfkasbkjgfakh67664},
which is needed in the proof of Theorem~\ref{kjfkjeopozxpozx}.
Our proof of Proposition~\ref{656758765876dfkasbkjgfakh67664} 
makes use of a theory of ``erasable weighted pairs'' which we develop in this section;
in fact Proposition~\ref{bvbwvwbwvbevbrbtobvbvp} is the only fact from this
graph theory which is needed,
but its proof requires several preliminary lemmas.

We stress that the present section is completely self-contained.
Except for the fact that 
Proposition~\ref{656758765876dfkasbkjgfakh67664} is used in the proof of Theorem~\ref{kjfkjeopozxpozx},
this section is completely independent from the rest of the paper.

Our graphs have finitely many vertices and edges, edges are not directed, 
no edge connects a vertex to itself, and at most one edge exists between a given pair of vertices.  
A {\it weighted graph\/} is a graph in which each vertex is assigned an integer (called the weight of the vertex).
Note that the empty graph is a weighted graph.
We assume that the reader is familiar with the classical notion of blowing-up of a weighted graph,
and refer to 1.1 and 1.2 of \cite{Dai:Chains} for details.
In particular, recall that there are three ways to blow-up a weighted graph $\Geul$:
one can blow-up $\Geul$ {\it at a vertex}, or {\it at an edge}, or one can perform the {\it free blowing-up\/} of $\Geul$
(in the last case, one takes the disjoint union of $\Geul$ and of a vertex of weight $-1$).
In all cases, blowing-up $\Geul$ produces a new weighted graph $\Geul'$
whose vertex-set is obtained from that of $\Geul$ by adding one new vertex $e$ of weight $-1$ (one says that
$e$ is the vertex ``created'' by the blowing-up).
If $\Geul'$ is a blowing-up of $\Geul$ and $e$ is the vertex of $\Geul'$  created by the blowing-up,
then one says that $\Geul$ is the blowing-down of $\Geul'$ at $e$.
Two weighted graphs $\Aeul$ and $\Beul$ are {\it equivalent\/} (denoted $\Aeul \sim \Beul$)
if one can be obtained from the other by a finite sequence of blowings-up and
blowings-down.
Note that if $\Geul$ is a weighted graph without edges, and in which each vertex has weight $-1$,
then $\Geul$ is equivalent to the empty weighted graph $\emptyset$.

\begin{definitions} \label{dfj;wieufajfklaj;dklfj;89868}
\mbox{\ }
\begin{enumerata}

\item By a \textit{weighted pair,} we mean an ordered pair $(\Geul, v)$
where $\Geul$ is a nonempty weighted graph and $v$ is a vertex of $\Geul$
(called the distinguished vertex).

\item
A \textit{blowing-up\/} of a weighted pair $(\Geul, v)$
is a weighted pair $(\Geul', v')$ satisfying:
\begin{itemize}

\item the weighted graph $\Geul'$ is obtained by blowing-up the 
weighted graph $\Geul$ either at the vertex $v$ or at an edge incident to $v$

\item $v'$ is the unique vertex of $\Geul'$ which is not a vertex of $\Geul$
(i.e., $v'$ is the vertex of weight $-1$ which is created by the 
blowing-up).

\end{itemize}
We write $(\Geul, v) \bup (\Geul', v')$ to indicate that 
$(\Geul', v')$ is a blowing-up of $(\Geul, v)$.

\item A weighted pair $(\Geul,v)$ is said to be \textit{erasable\/}
if there exists a finite sequence
\begin{equation} \label{37463786457823645897}
(\Geul,v) = (\Geul_0, e_0) \bup (\Geul_1, e_1) \bup \cdots \bup (\Geul_n, e_n)
\qquad \text{(with $n \ge 0$)}
\end{equation}
of blowings-up of weighted pairs such that
the weighted graph $\Geul_n \setminus \{ e_n \}$ is equivalent
to the empty weighted graph.

\end{enumerata}
\end{definitions}

\begin{remark}
In contrast with the theory of weighted graphs,
we do not define a ``blowing-down'' of weighted pairs.
The contraction of weighted pairs defined in
\ref{453724982736hsdjhasdgy8734978jkdfh} is not the inverse 
operation of the blowing-up of weighted pairs.
\end{remark}

\begin{remark} \label{5d656d50df90d9f0s=-f=00ijk}
Let $(\Geul,v)$ be a weighted pair.
The following claims are obvious:
\begin{enumerata}

\item {\it If $\Geul$ has a vertex $w$ of nonnegative weight
such that $w \neq v$ and $w$ is not a neighbor of $v$, then 
$(\Geul,v)$ is not erasable.}

\item {\it If $\Geul$ has at least two vertices, $v$ has negative weight
and all weights in $\Geul \setminus \{v\}$ are strictly less than $-1$,
then $(\Geul,v)$ is not erasable.}

\end{enumerata}
\end{remark}

\begin{definition}
For any weighted pair $(\Geul,v)$ we define
$\ell(\Geul,v) \in \Nat \cup \{ \infty \}$ as follows.
If $(\Geul,v)$ is not erasable, we set $\ell(\Geul,v) = \infty$.
If $(\Geul,v)$ is erasable, then we define $\ell(\Geul,v)$ to be the least
$n \in \Nat$ for which there exists a sequence~\eqref{37463786457823645897}
satisfying $\Geul_n \setminus \{ e_n \} \sim \emptyset$.
Thus a weighted pair $(\Geul,v)$ is erasable if and only if 
$\ell(\Geul,v) < \infty$.
Also note that the condition $\ell(\Geul,v) = 0$ is equivalent to
$\Geul \setminus \{v\} \sim \emptyset$.
\end{definition}

\begin{definition}
Let $(\Geul,v)$ be an erasable weighted pair such that $\ell(\Geul,v) > 0$.
A blowing-up $(\Geul',v')$ of $(\Geul,v)$ is said to be \textit{good\/} if
it satisfies $\ell (\Geul',v') < \ell (\Geul,v)$.
Note that if $(\Geul',v')$ is a good blowing-up of $(\Geul,v)$
then $(\Geul',v')$ is erasable and 
$\ell (\Geul',v') = \ell (\Geul,v) - 1$.
\end{definition}

\begin{lemma} \label {dkfjqwejpqoiweropi[wepi}
If $(\Geul,v)$ is an erasable weighted pair such that $\ell(\Geul,v) > 0$,
then there exists a good blowing-up of $(\Geul,v)$.
\end{lemma}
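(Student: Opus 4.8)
The plan is to read off a good blowing-up directly from a minimal erasing sequence for $(\Geul,v)$, so the argument is essentially an unwinding of the definition of $\ell$. Set $n = \ell(\Geul,v)$; by hypothesis $0 < n < \infty$, so the definition of $\ell$ supplies a sequence of blowings-up of weighted pairs
$$(\Geul,v) = (\Geul_0, e_0) \bup (\Geul_1, e_1) \bup \cdots \bup (\Geul_n, e_n)$$
with $\Geul_n \setminus \{e_n\} \sim \emptyset$, and of minimal length $n$ among all such sequences.

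Next I would single out the first step. Since $n \ge 1$, this sequence has at least one blowing-up, and by the very definition of blowing-up of a weighted pair, $(\Geul_1, e_1)$ is a blowing-up of $(\Geul_0, e_0) = (\Geul, v)$. This is my candidate, $(\Geul', v') := (\Geul_1, e_1)$.

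Finally I would verify that this candidate is good, by truncating the sequence. The tail
$$(\Geul_1, e_1) \bup (\Geul_2, e_2) \bup \cdots \bup (\Geul_n, e_n)$$
is itself a sequence of blowings-up of weighted pairs, it has length $n - 1$, and its terminal graph still satisfies $\Geul_n \setminus \{e_n\} \sim \emptyset$. Hence it witnesses that $(\Geul', v')$ is erasable with $\ell(\Geul', v') \le n - 1 < n = \ell(\Geul, v)$, which is exactly the condition for $(\Geul', v')$ to be a good blowing-up of $(\Geul, v)$.

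I do not anticipate any genuine obstacle here: the result is immediate from the characterization of $\ell$ as the minimal length of an erasing sequence, the only point requiring a (trivial) check being that the tail of an erasing sequence is again an erasing sequence for the pair at which it begins. In fact the same truncation idea, applied in reverse, forces $\ell(\Geul', v') = n - 1$ (prepending the first step to a shorter erasing sequence for $(\Geul',v')$ would contradict minimality of $n$), recovering the equality recorded in the remark preceding the lemma; but for the statement itself only the strict inequality is needed.
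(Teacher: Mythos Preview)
Your proof is correct and follows exactly the same approach as the paper: pick a minimal-length erasing sequence, take its first blowing-up $(\Geul_1,e_1)$ as the candidate, and observe that the tail witnesses $\ell(\Geul_1,e_1) \le n-1 < n$. The paper's version is terser (it asserts $\ell(\Geul_1,e_1) = \ell(\Geul,v)-1$ directly), but the content is identical.
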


\begin{proof}
Since $(\Geul,v)$ is erasable,
there exists a sequence~\eqref{37463786457823645897} such that
$\Geul_n \setminus \{ e_n \} \sim \emptyset$ and $n = \ell(\Geul,v)$.
By assumption we have $n>0$, so we may consider the blowing-up
$(\Geul,v) \bup (\Geul_1,e_1)$.
Then $\ell (\Geul_1,e_1) = \ell (\Geul,v) - 1$, so 
$(\Geul_1,e_1)$ is a good blowing-up of $(\Geul,v)$.
\end{proof}

\begin{definition} \label{453724982736hsdjhasdgy8734978jkdfh}
Let $(\Geul,v)$ be a weighted pair.
A \textit{contractible vertex\/} of $(\Geul,v)$ is a vertex
$w$ of $\Geul$ satisfying:
\begin{itemize}

\item the weight of $w$ is $(-1)$ and
$w$ has at most two neighbours in $\Geul$

\item if $w$ has two neighbours $v_1$ and $v_2$,
then $v_1,v_2$ are not joined by an edge

\item $w \neq v$ and $w$ is not a neighbour of $v$.

\end{itemize}
If $w$ is a contractible vertex of $(\Geul,v)$ then the 
\textit{contraction of $(\Geul,v)$ at $w$} is the weighted pair
$(\bar\Geul, \bar v)$ defined by taking $\bar \Geul$ to be the 
blowing-down of the weighted graph $\Geul$ at $w$ and by setting
$\bar v = v$.
\end{definition}

\begin{lemma} \label{7r6d7g6dfgfuerhju475387}
Suppose that $(\bar\Geul, \bar v)$  is the contraction of
a weighted pair $(\Geul,v)$ at some contractible vertex.
Then $\ell(\Geul,v) = \ell(\bar\Geul, \bar v)$.
\end{lemma}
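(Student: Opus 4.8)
The plan is to reduce everything to a single \emph{commutation lemma}: blowing up a weighted pair at its distinguished vertex (or at an edge incident to it) commutes with blowing down a contractible vertex $w$. First I would record the elementary observations that, because $w \neq v$ and $w$ is not a neighbour of $v$, blowing down $w$ leaves the vertex $v$, its weight, and the set of edges incident to $v$ completely untouched (the edges removed by the blow-down are incident to $w$, and any edge created joins the two neighbours of $w$, neither of which is $v$). In particular $\deg_\Geul(v) = \deg_{\bar\Geul}(v)$, so the pairs $(\Geul,v)$ and $(\bar\Geul,v)$ have exactly the same \emph{blow-up locations} --- the vertex $v$ together with each edge incident to $v$ --- and these are in a canonical bijection.

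The commutation lemma I would state as follows: if $w$ is a contractible vertex of $(\Geul,v)$ and $(\Geul,v) \bup (\Geul',v')$ is the blow-up at a location $\lambda$, then $w$ is again a contractible vertex of $(\Geul',v')$, and the contraction of $(\Geul',v')$ at $w$ equals the blow-up of $(\bar\Geul,v)$ at the same location $\lambda$. The proof is a local case analysis according to whether $\lambda$ is the vertex $v$ or an edge $v-u$, and according to whether $w$ has $0$, $1$, or $2$ neighbours. The only case in which the ``blow-up region'' near $v$ and the ``blow-down region'' near $w$ can interact is when $\lambda$ is an edge $v-u$ with $u$ a neighbour of $w$; there one checks that blowing up at $v-u$ and blowing down $w$ change the weight of $u$ by $-1$ and $+1$ respectively (so it is restored) while acting on disjoint edges, and that blowing up at $v-u$ never inserts an edge between the two neighbours of $w$, so $w$ remains contractible. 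This verification is the main obstacle: it is not deep, but it is where every clause in the definition of a contractible vertex gets used, and it must be carried out carefully for each configuration.

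With the commutation lemma in hand, I would prove $\ell(\Geul,v) = \ell(\bar\Geul,v)$ by establishing the two inequalities separately, each by induction on the relevant value of $\ell$. For the base case I note that deleting $v$ commutes with blowing down $w$ (again because $v$ is neither $w$ nor a neighbour of $w$), so $\Geul \setminus \{v\} \sim \bar\Geul \setminus \{v\}$ as weighted graphs; hence $\ell(\Geul,v)=0 \iff \Geul\setminus\{v\}\sim\emptyset \iff \bar\Geul\setminus\{v\}\sim\emptyset \iff \ell(\bar\Geul,v)=0$. For the inductive step proving $\ell(\bar\Geul,v) \le \ell(\Geul,v)$, I invoke Lemma~\ref{dkfjqwejpqoiweropi[wepi} to choose a good blow-up $(\Geul,v)\bup(\Geul_1,e_1)$ with $\ell(\Geul_1,e_1) = \ell(\Geul,v)-1$; the commutation lemma makes its $w$-contraction a blow-up of $(\bar\Geul,v)$, and the induction hypothesis applied to $(\Geul_1,e_1)$ gives $\ell(\bar\Geul,v) \le 1 + \ell(\Geul_1,e_1) = \ell(\Geul,v)$. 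The reverse inequality is symmetric: starting from a good blow-up of $(\bar\Geul,v)$, I lift it along the canonical bijection of blow-up locations to a blow-up of $(\Geul,v)$ whose $w$-contraction is the given one, and conclude by induction. Combining the two inequalities --- and noting that both sides equal $\infty$ when neither pair is erasable --- yields the stated equality.
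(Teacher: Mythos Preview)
Your proof is correct and rests on the same core observation as the paper's: blowing up the pair at (or near) its distinguished vertex commutes with contracting the remote vertex $w$. You isolate this as an explicit commutation lemma and sketch its verification by cases, whereas the paper simply asserts it in one line (``$w$ is a contractible vertex of $(\Heul,e)$, and $(\bar\Heul,\bar e)$ is the contraction of $(\Heul,e)$ at $w$''). The only structural difference is the induction scheme: you prove the two inequalities $\ell(\bar\Geul,\bar v)\le\ell(\Geul,v)$ and $\ell(\Geul,v)\le\ell(\bar\Geul,\bar v)$ separately, each by induction on the right-hand side, while the paper runs a single induction on $n=\min\big(\ell(\Geul,v),\ell(\bar\Geul,\bar v)\big)$, picks whichever side realizes the minimum, takes a good blow-up there, transports it to the other side via commutation, and applies the inductive hypothesis to the blown-up pair to conclude both sides are~$\le n$. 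Your version is more explicit and arguably easier to check; the paper's is more compact and avoids stating the commutation separately.
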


\begin{proof}
We proceed by induction on $n = \min( \ell(\Geul,v) , \ell(\bar\Geul, \bar v) )$, noting that the lemma is true whenever $n = \infty$.
Let $w$ be the contractible vertex of $(\Geul, v)$ at which the contraction is performed.
Then $\bar \Geul \setminus \{ \bar v \}$ is the blowing-down of $\Geul \setminus \{ v \}$ at $w$,
so there is an equivalence of weighted graphs $\Geul \setminus \{ v \} \sim \bar \Geul \setminus \{ \bar v \}$.
In particular, the lemma is true whenever $n=0$.

Consider $n \in \Nat \setminus \{0\}$ such that the lemma is true for all 
$(\Geul,v)$ and $(\bar\Geul, \bar v)$ satisfying $\min( \ell(\Geul,v) , \ell(\bar\Geul, \bar v) ) < n$. 
Consider $(\Geul,v)$ and $(\bar\Geul, \bar v)$ such that $\min( \ell(\Geul,v) , \ell(\bar\Geul, \bar v) ) = n$. 

Choose an element $(\Geul_0, v_0)$ of the set $\big\{ (\Geul,v) , (\bar\Geul, \bar v) \big\}$ such that 
$\ell(\Geul_0,v_0) = n$, and let $(\Geul_0', v_0')$ denote the other element of the set.
By \ref{dkfjqwejpqoiweropi[wepi},
there exists a blowing-up $(\Geul_0,v_0) \bup (\Geul_1, v_1)$ such that $\ell(\Geul_1,v_1) = n-1$.
Then $\Geul_1$ is the blowing-up of $\Geul_0$ at $x$, where $x$ is either the distinguished vertex $v_0$ or an 
edge $\{ v_0, u \}$ with $u$ a neighbour of $v_0$ in $\Geul_0$.
As the distinguished vertices of
$(\Geul_0,v_0)$ and $(\Geul_0', v_0')$ are the same ($v_0 = v_0'$ because $v=\bar v$),
and the neighbours of that vertex are the same
in $\Geul_0$ and in $\Geul_0'$, it makes sense to blow-up $\Geul_0'$ at $x$, 
and this gives rise to a blowing-up $(\Geul_0', v_0') \bup (\Geul_1', v_1')$ of weighted pairs.
Let us change the notation again and represent the two blowings-up
$(\Geul_0,v_0) \bup (\Geul_1, v_1)$ and $(\Geul_0', v_0') \bup (\Geul_1', v_1')$ 
as
$$
(\Geul,v) \bup (\Heul, e) \quad \text{and} \quad (\bar \Geul, \bar v) \bup (\bar \Heul, \bar e) \qquad \text{(in some order)}.
$$
Note that $w$ is a contractible vertex of $(\Heul, e)$,
and that $(\bar \Heul, \bar e)$ is the contraction of $(\Heul, e)$ at $w$.
We have
$$
\min( \ell( \Heul, e ), \ell( \bar \Heul, \bar e ) )
= \min( \ell(\Geul_1, v_1), \ell(\Geul_1', v_1') )
\le \ell(\Geul_1, v_1) = n-1,
$$
so the inductive hypothesis implies that  $\ell( \Heul, e ) = \ell( \bar \Heul, \bar e )$, which is equal to $n-1$.
Thus $\ell(\Geul,v) \le 1 + \ell(\Heul,e) =n$ and $\ell(\bar \Geul,\bar v) \le 1 + \ell(\bar \Heul,\bar e) =n$,
so 
$$
\max( \ell( \Geul, v ), \ell( \bar \Geul, \bar v ) ) \le n = \min( \ell( \Geul, v ), \ell( \bar \Geul, \bar v ) )
$$
and consequently $\ell( \Geul, v ) = \ell( \bar \Geul, \bar v ) )$.
\end{proof}

\begin{notation}
Given integers $x_1, \dots, x_n$ and $i \in \{1, \dots, n\}$,
the weighted pair
$$
\setlength{\unitlength}{1mm}
\begin{picture}(66,6)(-3,-3)
\put(0,0){\circle*{1}}
\put(20,0){\circle*{1}}
\put(30,0){\circle*{1}}
\put(40,0){\circle*{1}}
\put(60,0){\circle*{1}}
\put(0,0){\line(1,0){7}}
\put(20,0){\line(-1,0){7}}
\put(10,0){\makebox(0,0){\dots}}
\put(50,0){\makebox(0,0){\dots}}
\put(20,0){\line(1,0){27}}
\put(60,0){\line(-1,0){7}}
\put(0,-1){\makebox(0,0)[t]{\tiny $x_1$}}
\put(20,-1){\makebox(0,0)[t]{\tiny $x_{i-1}$}}
\put(30,-1){\makebox(0,0)[t]{\tiny $x_{i}$}}
\put(40,-1){\makebox(0,0)[t]{\tiny $x_{i+1}$}}
\put(60,-1){\makebox(0,0)[t]{\tiny $x_n$}}
\put(30,1){\makebox(0,0)[b]{\tiny $*$}}
\end{picture}
$$
(where the asterisk $*$ indicates the distinguished vertex)
is denoted by 
$$
[x_1, \dots, x_{i-1}, x_i^*, x_{i+1}, \dots, x_n].
$$
Observe that there is an equality of weighted pairs
$$
[x_1, \dots, x_{i-1}, x_i^*, x_{i+1}, \dots, x_n]
=
[x_n, \dots, x_{i+1}, x_i^*, x_{i-1}, \dots, x_1].
$$
\end{notation}

\begin{smallremark}
The next result asserts that a certain implication is true.
We will find later that this implication has a false hypothesis---which, of course, is not a problem.
\end{smallremark}

\begin{lemma}  \label{5657df69a78d09a87se<F7>k4l;234l5k2;o}
$\ell [-2,-1^*, -1, -3] < \infty\ \implies\ 
\ell [-3,-1^*, -1, -2] < \ell[-2,-1^*, -1, -3]$.
\end{lemma}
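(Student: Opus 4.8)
The plan is to pin down $\ell(B)$ in terms of $\ell(A)$ by analysing the three blow-ups of $A=[-2,-1^*,-1,-3]$. Write the path of $A$ as $t_1-t_2-t_3-t_4$ with weights $-2,-1,-1,-3$ and distinguished vertex $t_2$. Since a weighted pair is blown up only at its distinguished vertex or at an edge incident to it, $A$ has exactly three blow-ups, and I would first compute each one and simplify it by contractions (Lemma~\ref{7r6d7g6dfgfuerhju475387}). Blowing up at the edge $\{t_1,t_2\}$ leaves $t_3$ as a contractible $(-1)$-vertex; contracting it produces exactly $B=[-3,-1^*,-1,-2]$, so this blow-up $A'$ of $A$ satisfies $\ell(A')=\ell(B)$. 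Blowing up at the vertex $t_2$ and then contracting $t_3$ produces the star $F$ with centre of weight $-1$ and arms of weights $-2,-2,-1^*$. Blowing up at the edge $\{t_2,t_3\}$ produces $C:=[-2,-2,-1^*,-2,-3]$, which has no contractible vertex.

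Because $A\setminus\{t_2\}$ is a nonempty forest equivalent to $[-2]\sqcup[-2]$ (the leaf $-1$ of the component $[-1,-3]$ blows down), it is not equivalent to $\emptyset$, so $\ell(A)>0$. Hence, by the definition of $\ell$ together with Lemma~\ref{dkfjqwejpqoiweropi[wepi}, one has the recursion $\ell(A)=1+\min\bigl(\ell(F),\ell(B),\ell(C)\bigr)$. The whole proof then reduces to showing that $\ell(F)=\ell(C)=\infty$: once this is known, the minimum is realised only by $\ell(B)$, so $\ell(A)=1+\ell(B)$, and the hypothesis $\ell(A)<\infty$ forces $\ell(B)=\ell(A)-1<\ell(A)$.

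Dispatching $C$ is immediate: its distinguished vertex has weight $-1<0$ and every other weight of $C$ is $-2$ or $-3$, hence $<-1$, so Remark~\ref{5d656d50df90d9f0s=-f=00ijk}(b) gives at once that $C$ is not erasable, i.e.\ $\ell(C)=\infty$. The star $F$ is the genuine difficulty, because the centre of $F$ has weight exactly $-1$, so Remark~\ref{5d656d50df90d9f0s=-f=00ijk}(b) does not apply to $F\setminus\{v\}$. Here I would argue by infinite descent. One blow-up of $F$ (at the edge joining the distinguished vertex to the centre) lands on a pair all of whose non-distinguished weights are $\le-2$, which is non-erasable by Remark~\ref{5d656d50df90d9f0s=-f=00ijk}(b); the other blow-up (at the distinguished vertex) retains a persistent $(-1)$-vertex of degree $\ge3$ carrying two $(-2)$-arms, a feature reproduced by all its further blow-ups that are not themselves caught by Remark~\ref{5d656d50df90d9f0s=-f=00ijk}(b). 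I would show that along every such ``surviving'' branch the determinant of the complement of the distinguished vertex stays even (it is $0$ for $F$ and $\pm4,\pm8,\dots$ thereafter), hence never equals $\pm1$, so that complement is never equivalent to $\emptyset$. Since every pair occurring in a hypothetical erasing sequence for $F$ must itself be erasable and therefore cannot satisfy Remark~\ref{5d656d50df90d9f0s=-f=00ijk}(b), such a sequence would stay entirely on the surviving branch and could never reach a complement equivalent to $\emptyset$; this contradiction yields $\ell(F)=\infty$.

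The main obstacle is exactly this last parity computation for $F$. For $C$ the negative-definiteness underlying Remark~\ref{5d656d50df90d9f0s=-f=00ijk}(b) does all the work, but for $F$ the surviving $(-1)$-vertex keeps the complement from being negative definite (its determinant is $0$), so one must instead control the determinant of the complement modulo $2$. The delicate point is the closure of this invariant under edge blow-ups, which bring in the determinants $\det(\Geul\setminus\{u\})$ of the neighbours $u$ of the distinguished vertex and thus force one to track more than the single quantity $\det(\Geul\setminus\{v\})$; verifying that evenness propagates through this branching is where the real bookkeeping lies, everything else being routine blow-up and contraction calculus.
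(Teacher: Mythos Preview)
Your overall architecture matches the paper's: analyse the three blow-ups of $A$, rule two of them out, and use the third to produce $B$. Your treatment of $C$ (the paper's case (b), via Remark~\ref{5d656d50df90d9f0s=-f=00ijk}(b)) and of $B$ (the paper's case (c), via contraction and Lemma~\ref{7r6d7g6dfgfuerhju475387}) is exactly what the paper does.

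The difference is in how you handle the blow-up at the vertex (your $F$, the paper's case (a)), and here you are making things much harder than necessary. The paper does \emph{not} contract to $F$ and then study its blow-up tree. Instead it keeps the original $(\Geul_1,e_1)$ and observes that for every subsequent $(\Geul_k,e_k)$ in an erasure sequence, the vertices $t_1,t_3,t_4$ are never neighbours of the distinguished vertex, so their weights stay $-2,-1,-3$; and all newly created vertices except $e_n$ have weight $\le -2$. Hence the component of $\Geul_n\setminus\{e_n\}$ containing $t_2$ has the fixed shape $t_1(-2)-t_2(z)-t_3(-1)-t_4(-3)$ with a branch $\Beul$ of weights $\le -2$ at $t_2$ and $z\le -2$. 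This graph is not $\sim\emptyset$: blow down $t_3$; then either all weights are $\le -2$ (if $z\le -3$), or the only $(-1)$-vertex is $t_2$ which has degree $3$ (if $z=-2$ and $\Beul\neq\emptyset$), or one reaches $[0]$ (if $z=-2$ and $\Beul=\emptyset$). In all cases no further blow-down is possible and the graph is nonempty, so it is not $\sim\emptyset$. That is the whole argument.

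Your parity-of-determinant programme is not needed, and as written it is a genuine gap: you acknowledge that edge blow-ups force you to track $\det(\Geul\setminus\{u\})$ for neighbours $u$ as well, and you never verify the inductive step. The insight you are missing is that your own observation about the ``persistent $(-1)$-vertex of degree $\ge 3$ carrying two $(-2)$-arms'' already finishes the job: a weighted forest is $\sim\emptyset$ if and only if it can be blown down to $\emptyset$ (this is the standard fact underlying all of Section~\ref{SecAppendix}, used again in Proposition~\ref{bvbwvwbwvbevbrbtobvbvp}), and a forest whose only $(-1)$-vertex has degree $\ge 3$ admits no blow-down. So once you have that persistent vertex, you are done---no determinant bookkeeping required.
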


\begin{proof}
Suppose that $(\Geul,v) = [-2,-1^*, -1, -3]$ is erasable 
and observe that $\ell(\Geul,v) > 0$.
Pick a sequence \eqref{37463786457823645897}
such that $\Geul_n \setminus \{e_n\} \sim \emptyset$ and such that 
$n = \ell(\Geul,v)$.
Then $(\Geul_1, e_1)$ is a good blowing-up of $(\Geul,v)$ and one of
the following holds:
\begin{enumerata}

\item $(\Geul_1, e_1)$ is the blowing-up of $(\Geul,v)$ at $v$

\item $(\Geul_1, e_1)$ is the blowing-up of $(\Geul,v)$ at 
the edge $[-1^*, -1]$

\item $(\Geul_1, e_1)$ is the blowing-up of $(\Geul,v)$ at 
the edge $[-2,-1^*]$.

\end{enumerata}
In case~(a), one of the connected components of $\Geul_n \setminus \{ e_n \}$ has the form
\begin{equation} \label{34gf3g4f3gf4g3f4g3f4g3f}
\setlength{\unitlength}{1mm}
\raisebox{-7\unitlength}{\begin{picture}(36,16)(-3,-3)
\put(0,0){\circle*{1}}
\put(10,0){\circle*{1}}
\put(20,0){\circle*{1}}
\put(30,0){\circle*{1}}
\put(0,0){\line(1,0){30}}
\put(10,0){\line(0,1){5}}
\put(0,-1){\makebox(0,0)[t]{\tiny $-2$}}
\put(10,-1.5){\makebox(0,0)[t]{\tiny $z$}}
\put(20,-1){\makebox(0,0)[t]{\tiny $-1$}}
\put(30,-1){\makebox(0,0)[t]{\tiny $-3$}}
\put(12,9){\makebox(0,0)[l]{\tiny $\Beul$}}
\put(10,9){\oval(3,10)}
\end{picture}} \qquad \text{(for some $z \in \Integ$)}
\end{equation}
where every vertex in the branch $\Beul$ has weight strictly less than $-1$
(and $\Beul$ might be empty).
This is absurd, because the weighted
graph \eqref{34gf3g4f3gf4g3f4g3f4g3f} is not equivalent to $\emptyset$.
Thus case~(a) does not occur.

In case~(b),
\ref{5d656d50df90d9f0s=-f=00ijk}(b) implies that
$(\Geul_1, e_1)$ is not erasable, which is absurd; 
so case~(b) does not occur either.

In case~(c) we have $(\Geul_1, e_1) = [-3, -1^*, -2, -1, -3]$,
and the contraction of 
$(\Geul_1, e_1)$ at its contractible vertex is
$(\bar \Geul_1, \bar e_1) = [-3, -1^*, -1, -2]$. Consequently
$$
\ell [-3, -1^*, -1, -2] = \ell (\bar \Geul_1, \bar e_1)
= \ell (\Geul_1, e_1) < \ell (\Geul, v) = \ell [-2,-1^*, -1, -3]
$$
(where we used \ref{7r6d7g6dfgfuerhju475387}),
and this proves the lemma.
\end{proof}

\begin{lemma} \label{cvnbcvnbx,nmv.z,mnv,js746597645782}
If $x \le -2$ and $\ell[-1^*, -1, x, -4] < \infty$, then
$$
\ell[-1^*, -1, x, -4] > \ell[ -3, -1^*, -1, -2] .
$$
\end{lemma}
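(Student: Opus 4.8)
The plan is to fix the abbreviation $f(x) = \ell[-1^*,-1,x,-4]$ (for integers $x$) and to prove, for every $x \le -2$ with $f(x) < \infty$, the closed formula $f(x) = -x + \ell[-2,-1^*,-1,-3]$. Granting this, the conclusion is immediate: since $x \le -2$ forces $-x \ge 2 > 0$, one gets $f(x) > \ell[-2,-1^*,-1,-3]$, while (as explained below) the hypothesis $f(x)<\infty$ propagates to $\ell[-2,-1^*,-1,-3]<\infty$, so Lemma~\ref{5657df69a78d09a87se<F7>k4l;234l5k2;o} applies and yields $\ell[-2,-1^*,-1,-3] > \ell[-3,-1^*,-1,-2]$. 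Chaining the two inequalities gives the assertion.

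First I would establish the recursion $f(x) = 1 + f(x+1)$ for every $x \le -2$ (assuming $f(x)<\infty$). Since $[-1,x,-4]\not\sim\emptyset$, we have $f(x)>0$, so by Lemma~\ref{dkfjqwejpqoiweropi[wepi} some blowing-up of $[-1^*,-1,x,-4]$ is good. The distinguished vertex sits at the end of the chain and has a single incident edge, so there are exactly two blowings-up: the one at the distinguished vertex, namely $[-1^*,-2,-1,x,-4]$, and the one at the edge, namely $[-2,-1^*,-2,x,-4]$. In the latter every non-distinguished weight is $\le -2 < -1$ (here I use $x \le -2$), so Remark~\ref{5d656d50df90d9f0s=-f=00ijk}(b) shows it is not erasable; hence it is not good, and the good blow-up must be $[-1^*,-2,-1,x,-4]$, giving $f(x) = 1 + \ell[-1^*,-2,-1,x,-4]$. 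The third vertex of $[-1^*,-2,-1,x,-4]$ is contractible (weight $-1$, two non-adjacent neighbours, and it is neither the distinguished vertex nor a neighbour of it), and contracting it produces $[-1^*,-1,x+1,-4]$; by Lemma~\ref{7r6d7g6dfgfuerhju475387} contraction preserves $\ell$, so $f(x) = 1 + f(x+1)$. In particular finiteness of $f(x)$ forces finiteness of $f(x+1)$, so the hypothesis propagates upward along the recursion.

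Iterating from $x$ up to $-1$ yields $f(x) = (-1-x) + f(-1)$ with $f(-1) = \ell[-1^*,-1,-1,-4] < \infty$. To evaluate $f(-1)$ I would contract again: the third vertex of $[-1^*,-1,-1,-4]$ is contractible, and contraction gives $[-1^*,0,-3]$, so $f(-1) = \ell[-1^*,0,-3]$. Since $[0,-3]\not\sim\emptyset$, we have $\ell[-1^*,0,-3]>0$ and a good blow-up exists. Of the two blowings-up, the one at the distinguished vertex is $[-1^*,-2,0,-3]$, which carries the nonnegative weight $0$ on a vertex that is neither distinguished nor adjacent to the distinguished vertex; by Remark~\ref{5d656d50df90d9f0s=-f=00ijk}(a) it is not erasable, hence not good. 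Thus the good blow-up is the edge one, $[-2,-1^*,-1,-3]$, so $f(-1) = 1 + \ell[-2,-1^*,-1,-3]$. Combining, $f(x) = (-1-x) + 1 + \ell[-2,-1^*,-1,-3] = -x + \ell[-2,-1^*,-1,-3]$, the claimed formula, and the proof closes as in the first paragraph.

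The main obstacle is organizational rather than computational: at each stage one must show that exactly one of the two available blowings-up is good, which means ruling out the other through the two obstructions of Remark~\ref{5d656d50df90d9f0s=-f=00ijk}, and one must confirm $\ell>0$ (equivalently that the residual chains $[-1,x,-4]$ and $[0,-3]$ are not equivalent to $\emptyset$) so that Lemma~\ref{dkfjqwejpqoiweropi[wepi} actually supplies a good blow-up. For these two non-equivalences the convenient tool is the determinant of the intersection matrix, which changes sign under every blowing-up and blowing-down; combined with a parity count of the vertex number this rules out equivalence to $\emptyset$. The contraction Lemma~\ref{7r6d7g6dfgfuerhju475387} is what makes the bookkeeping tractable, collapsing each five-vertex chain back to a four-vertex chain so that the recursion closes onto the configuration treated in Lemma~\ref{5657df69a78d09a87se<F7>k4l;234l5k2;o}.
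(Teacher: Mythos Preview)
Your proof is correct and follows essentially the same path as the paper: identify the unique good blow-up at each stage (ruling out the alternative via Remark~\ref{5d656d50df90d9f0s=-f=00ijk}), contract back to a shorter chain via Lemma~\ref{7r6d7g6dfgfuerhju475387}, and recurse down to the configuration handled by Lemma~\ref{5657df69a78d09a87se<F7>k4l;234l5k2;o}. The only cosmetic difference is that you track the exact equality $\ell(\Geul',v') = \ell(\Geul,v)-1$ for a good blow-up to produce the closed formula $f(x) = -x + \ell[-2,-1^*,-1,-3]$, whereas the paper records only the strict inequality $f(x) > f(x+1)$ at each step; the underlying argument is identical.
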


\begin{proof}
Let $x \le -2$, let $(\Geul,v) = [-1^*, -1, x, -4]$ and suppose
that $\ell (\Geul,v) < \infty$.
As $\ell (\Geul,v) > 0$, there exists a good blowing-up 
$(\Geul',v')$ of $(\Geul,v)$. By \ref{5d656d50df90d9f0s=-f=00ijk}(b),
$(\Geul',v')$ cannot be the blowing-up of $(\Geul,v)$ at the
edge $[-1^*, -1]$; so 
$(\Geul',v')$ is the blowing-up of $(\Geul,v)$ at $v$, i.e.,
$(\Geul',v') = [-1^*, -2, -1, x, -4]$. The contraction of
$(\Geul',v')$ at its contractible vertex is
$(\bar\Geul',\bar v') = [-1^*, -1, x+1, -4]$, so 
$$
\ell [-1^*, -1, x+1, -4] 
= \ell (\bar\Geul',\bar v') 
= \ell (\Geul',v') 
< \ell (\Geul,v)
= \ell[-1^*, -1, x, -4].
$$
More precisely, we have shown that
if $x \le -2$ and $\ell[-1^*, -1, x, -4] < \infty$ then
$$
\ell[-1^*, -1, x, -4] > \ell [-1^*, -1, x+1, -4].
$$
By induction it follows that 
if $x \le -2$ and $\ell[-1^*, -1, x, -4] < \infty$, then
$$
\ell[-1^*, -1, x, -4] > \ell [-1^*, -1, -1, -4] = \ell [-1^*, 0, -3]
$$
(where the equality follows from \ref{7r6d7g6dfgfuerhju475387});
so there only remains to show that 
\begin{equation} \label{dfpqoi23u9873489rioe}
\ell [-1^*, 0, -3] \ge  \ell[ -3, -1^*, -1, -2] .
\end{equation}
This is obvious if 
$\ell [-1^*, 0, -3] = \infty$, so let us assume that
$\ell [-1^*, 0, -3] < \infty$.
Let $(\Geul,v) = [-1^*, 0, -3]$.
As $\ell (\Geul,v) > 0$, there exists a good blowing-up 
$(\Geul',v')$ of $(\Geul,v)$. By \ref{5d656d50df90d9f0s=-f=00ijk}(a),
$(\Geul',v')$ cannot be the blowing-up of $(\Geul,v)$ at $v$,
so it must be the blowing-up of $(\Geul,v)$ at the edge $[-1^*, 0]$; so 
$(\Geul',v') = [-2, -1^*, -1, -3]$ and consequently
\begin{equation} \label{4yt3yt42yut42u3y765}
\ell [-2, -1^*, -1, -3] = \ell (\Geul',v') < \ell (\Geul,v)
= \ell  [-1^*, 0, -3] < \infty.
\end{equation}
As $\ell [-2, -1^*, -1, -3] < \infty$,
\ref{5657df69a78d09a87se<F7>k4l;234l5k2;o} implies that 
$\ell [-3,-1^*, -1, -2] < \ell[-2,-1^*, -1, -3]$,
so \eqref{4yt3yt42yut42u3y765} gives
$$
\ell [-3,-1^*, -1, -2] < \ell[-2,-1^*, -1, -3] < \ell  [-1^*, 0, -3].
$$
So \eqref{dfpqoi23u9873489rioe} is proved and we are done.
\end{proof}

\begin{lemma} \label{cvgdvcdgfgcvgvcsiyuegtou78}
If $x \le -2$ and $\ell[-1, -1^*, x, -4] < \infty$, then
$$
\ell[-1, -1^*, x, -4] > \ell[ -3, -1^*, -1, -2] .
$$
\end{lemma}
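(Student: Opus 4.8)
The statement to be proved is Lemma~\ref{cvgdvcdgfgcvgvcsiyuegtou78}, which is almost identical to the preceding Lemma~\ref{cvnbcvnbx,nmv.z,mnv,js746597645782}: the only difference is that the distinguished vertex has moved one step to the right, so that the weighted pair under consideration is $[-1, -1^*, x, -4]$ instead of $[-1^*, -1, x, -4]$. The plan is to reduce the new case to the already-established Lemma~\ref{cvnbcvnbx,nmv.z,mnv,js746597645782} by analyzing the possible good blowings-up of $[-1, -1^*, x, -4]$ and showing that each of them leads, after a contraction, to a weighted pair of a form already under control. Throughout I will use Lemma~\ref{dkfjqwejpqoiweropi[wepi} (existence of a good blowing-up when $\ell>0$), Lemma~\ref{7r6d7g6dfgfuerhju475387} (invariance of $\ell$ under contraction), and the obvious non-erasability criteria of Remark~\ref{5d656d50df90d9f0s=-f=00ijk}.

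\textbf{Main steps.}
First I would set $(\Geul,v) = [-1, -1^*, x, -4]$ with $x \le -2$ and assume $\ell(\Geul,v) < \infty$; note $\ell(\Geul,v) > 0$ since $\Geul \setminus \{v\}$ is not equivalent to $\emptyset$ (it contains vertices of weight $\le -2$ and the isolated $-1$ on the left cannot by itself be contracted away). By Lemma~\ref{dkfjqwejpqoiweropi[wepi} there is a good blowing-up $(\Geul',v')$, obtained either at the distinguished vertex $v$, or at the edge $[-1,-1^*]$ joining $v$ to its left neighbour, or at the edge $[-1^*, x]$ joining $v$ to its right neighbour. The second option blows up an edge between two weight-$(-1)$ vertices; the resulting graph $\Geul' \setminus \{v'\}$ then has all non-distinguished weights $\le -2$ while $v'$ has negative weight, so Remark~\ref{5d656d50df90d9f0s=-f=00ijk}(b) forbids erasability, a contradiction. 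For the remaining two cases I would write out $\Geul'$ explicitly, locate the contractible $(-1)$-vertex created, contract it using Lemma~\ref{7r6d7g6dfgfuerhju475387}, and identify the resulting pair. The blow-up at $v$ gives $[-1, -1^*, -2, -1, x, -4]$, whose contractible vertex contracts to $[-1, -1^*, x+1, -4]$; the blow-up at the edge $[-1^*, x]$ gives $[-1, -2, -1^*, -1, x+1, -4]$, and contracting the new $(-2)$-neighbour's companion $(-1)$ reduces it to the same target $[-1, -1^*, x+1, -4]$ (after further contraction of the now-superfluous $(-1)$ on the left). In either surviving case the strict decrease $\ell(\Geul',v') = \ell(\Geul,v)-1$ combined with contraction-invariance yields $\ell[-1,-1^*,x,-4] > \ell[-1,-1^*,x+1,-4]$, and an induction on $x$ (terminating at $x=-1$, where the left $(-1)$ together with the now weight-$(-1)$ third vertex can be cleared) reduces the whole problem to comparing against the base case handled in Lemma~\ref{cvnbcvnbx,nmv.z,mnv,js746597645782}.

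\textbf{Where the difficulty lies.}
The main obstacle is bookkeeping in the contraction step: after each good blowing-up one must verify that the newly created $(-1)$-vertex is genuinely contractible in the sense of Definition~\ref{453724982736hsdjhasdgy8734978jkdfh} (at most two neighbours, those neighbours non-adjacent, and the vertex distinct from and not adjacent to the distinguished vertex), and that after contraction the distinguished vertex and its list of neighbours match the target string exactly. The asymmetry introduced by having a $(-1)$ on the \emph{left} of the distinguished vertex means I must be careful that this stray $(-1)$ does not obstruct a contraction or, conversely, that it can itself be cleared at the appropriate moment; tracking which $(-1)$'s survive and in what order they may be contracted is the delicate part. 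Once the recursion $\ell[-1,-1^*,x,-4] > \ell[-1,-1^*,x+1,-4]$ is established, the passage to the base value of $x$ and the final comparison with $\ell[-3,-1^*,-1,-2]$ via Lemma~\ref{cvnbcvnbx,nmv.z,mnv,js746597645782} is routine.
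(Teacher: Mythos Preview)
Your overall framework—rule out the blow-up at the left edge via Remark~\ref{5d656d50df90d9f0s=-f=00ijk}(b), then analyze the two surviving good blow-ups and use Lemma~\ref{7r6d7g6dfgfuerhju475387}—is the right one and matches the paper. However, your explicit computations are wrong, and the recursion you extract runs in the opposite direction to the one you claim.

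Both of your blown-up graphs have six vertices, but a single blow-up of a four-vertex graph has five. More substantively: the blow-up of $(\Geul,v)=[-1,-1^*,x,-4]$ at the vertex $v$ is \emph{not} a linear chain. The old $v$ (now of weight $-2$) becomes a branch point with three neighbours (the left $(-1)$, the vertex of weight $x$, and the new distinguished vertex $v'$). The unique contractible vertex is the leftmost $(-1)$; contracting it yields $[-1^*,-1,x,-4]$ with the \emph{same} $x$. This is exactly the pair treated by Lemma~\ref{cvnbcvnbx,nmv.z,mnv,js746597645782}, so if this case occurs the desired inequality follows at once—no recursion on $x$ is involved here.

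For the blow-up at the edge $[-1^*,x]$ one obtains $(\Geul',v')=[-1,-2,-1^*,x-1,-4]$ (five vertices); contracting its leftmost $(-1)$ gives $[-1,-1^*,x-1,-4]$. Thus $x$ \emph{decreases}, so your planned upward induction terminating at $x=-1$ cannot work. The correct argument (and the paper's) is a descent: let $E$ be the set of $x\le -2$ for which the lemma fails. If $x\in E$ then the vertex blow-up case is impossible (it would prove the lemma for $x$), so the edge blow-up case must occur, yielding $x-1\in E$ with $\ell[-1,-1^*,x-1,-4]<\ell[-1,-1^*,x,-4]$. Hence $E\ne\emptyset$ would give an infinite strictly decreasing sequence in $\Nat$, a contradiction.
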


\begin{proof}
Let $E$ be the set of $x \in \Integ$ satisfying $x \le -2$ and
\begin{equation} \label{dfjpqio3u4rp934rjuf}
\ell[-1, -1^*, x, -4] < \infty
\textit{\ \ and\ \ }
\ell[-1, -1^*, x, -4] \le \ell[ -3, -1^*, -1, -2] .
\end{equation}
It suffices to show that $E = \emptyset$.
By contradiction, suppose that $E \neq \emptyset$ and pick $x \in E$.
Let $(\Geul,v) = [-1, -1^*, x, -4]$.
Then $\ell (\Geul,v) < \infty$ and $\ell (\Geul,v) > 0$,
so there exists a good blowing-up $(\Geul',v')$ of $(\Geul,v)$.
By \ref{5d656d50df90d9f0s=-f=00ijk}(b),
$(\Geul',v')$ cannot be the blowing-up of $(\Geul,v)$ at
the edge $[-1, -1^*]$; so one of the following conditions must hold:
\begin{enumerata}

\item $(\Geul',v')$ is the blowing-up of $(\Geul,v)$ at $v$

\item $(\Geul',v')$ is the blowing-up of $(\Geul,v)$ at
the edge $[-1^*, x]$.

\end{enumerata}
In  case~(a), the contraction of $(\Geul',v')$ at its contractible
vertex is 
$$
(\bar \Geul',\bar v') = [-1^*, -1, x, -4].
$$
Thus $\ell [-1^*, -1, x, -4] = \ell (\bar \Geul',\bar v')
= \ell (\Geul',v') < \ell (\Geul,v) < \infty$,
so \ref{cvnbcvnbx,nmv.z,mnv,js746597645782}
implies that $\ell[-1^*, -1, x, -4] > \ell[ -3, -1^*, -1, -2]$.
This gives
$$
\ell[ -3, -1^*, -1, -2] < \ell[-1^*, -1, x, -4] < \ell (\Geul,v)
= \ell [-1, -1^*, x, -4],
$$
which contradicts \eqref{dfjpqio3u4rp934rjuf}
(and \eqref{dfjpqio3u4rp934rjuf} holds since $x \in E$).
Thus case~(a) does not occur.

In case~(b), $(\Geul',v') = [-1, -2, -1^*, x-1, -4]$.
The contraction of $(\Geul',v')$ at its contractible
vertex is $(\bar \Geul',\bar v') = [-1, -1^*, x-1, -4]$, so
$\ell [-1, -1^*, x-1, -4] = \ell (\bar \Geul',\bar v')
= \ell (\Geul',v') < \ell (\Geul,v) = \ell[-1, -1^*, x, -4]$.
In fact we have shown:
$$
\textit{if $x \in E$ then 
$\ell [-1, -1^*, x-1, -4] < \ell[-1, -1^*, x, -4]$ and $x-1 \in E$.}
$$
This implication together with $E \neq \emptyset$ imply the existence
of an infinite descending sequence
$$
\ell[-1, -1^*, x, -4] > \ell [-1, -1^*, x-1, -4] > \ell [-1, -1^*, x-2, -4]
> \dots
$$
of natural numbers, which is absurd.
So $E = \emptyset$ and we are done.
\end{proof}

\begin{lemma} \label{g3gg4gg6g8gg9ggg3g3g}
$[-3,-1^*, -1, -2]$ is not erasable.
\end{lemma}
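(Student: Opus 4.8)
The plan is to show that the weighted pair $[-3,-1^*,-1,-2]$ is not erasable by deriving a contradiction from the assumption that it is. The key tool is Lemma~\ref{cvgdvcdgfgcvgvcsiyuegtou78}, which applies to pairs of the shape $[-1,-1^*,x,-4]$ with $x\le -2$. The strategy is to perform good blowings-up on $[-3,-1^*,-1,-2]$ in order to produce (after a contraction) a pair of that shape, thereby linking $\ell[-3,-1^*,-1,-2]$ to $\ell[-1,-1^*,x,-4]$ and exploiting the strict inequality $\ell[-1,-1^*,x,-4] > \ell[-3,-1^*,-1,-2]$ furnished by that lemma. If these two quantities can be shown to be comparable in the opposite direction as well, we get an impossible chain of strict inequalities among natural numbers.

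First I would assume $(\Geul,v)=[-3,-1^*,-1,-2]$ is erasable, so $\ell(\Geul,v)<\infty$, and note $\ell(\Geul,v)>0$ (since $\Geul\setminus\{v\}$, a chain $[-3]\!-\![-1]\!-\![-2]$ with no $(-1)$-vertex eligible for contraction after the neighbor restriction, is not equivalent to $\emptyset$). By Lemma~\ref{dkfjqwejpqoiweropi[wepi} there exists a good blowing-up $(\Geul',v')$. I would then enumerate the possible blowing-up sites at the distinguished vertex $v$ or at an edge incident to $v$, using Remark~\ref{5d656d50df90d9f0s=-f=00ijk} to rule out the blowing-up at the edge $[-1^*,-1]$ (which would leave $v'$ with a nonneighboring $(-3)$ or produce a non-erasable pair). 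The remaining cases are the blowing-up at $v$ itself and at the edge $[-3,-1^*]$. In each case I would write down the resulting pair explicitly, then apply a contraction at the newly exposed contractible $(-1)$-vertex and invoke Lemma~\ref{7r6d7g6dfgfuerhju475387} to keep $\ell$ unchanged, reducing to a pair whose $\ell$ is strictly smaller than $\ell[-3,-1^*,-1,-2]$.

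The aim is that at least one of these reduced pairs is of the form $[-1,-1^*,x,-4]$ for some $x\le -2$ (after possibly a symmetry $[x_1,\dots]=[\dots,x_1]$ in the bracket notation). Once I reach such a pair $\Heul$ with $\ell(\Heul)<\ell[-3,-1^*,-1,-2]<\infty$, Lemma~\ref{cvgdvcdgfgcvgvcsiyuegtou78} applies and yields $\ell(\Heul)>\ell[-3,-1^*,-1,-2]$. Combining $\ell(\Heul)<\ell[-3,-1^*,-1,-2]$ with $\ell(\Heul)>\ell[-3,-1^*,-1,-2]$ is the desired contradiction, forcing $\ell[-3,-1^*,-1,-2]=\infty$, i.e.\ the pair is not erasable.

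The main obstacle will be the bookkeeping in the case analysis: verifying that the good blowing-up at $v$ or at the edge $[-3,-1^*]$, followed by the correct contraction, genuinely lands in the family $[-1,-1^*,x,-4]$ rather than some stray configuration, and correctly tracking the orientation of the bracket so that the $(-4)$ ends up in the required slot. I expect the weights to work out as $[-3,-1^*,-2,-1,-2]$ or similar after blowing up at the edge, whose contraction must be checked to match a shape handled by the preceding lemmas; the delicate point is ensuring the hypotheses of Lemma~\ref{cvgdvcdgfgcvgvcsiyuegtou78} (namely the leading $[-1,-1^*,x]$ pattern with $x\le -2$ and the trailing $-4$) are exactly met. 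Everything else is a routine application of Lemmas~\ref{dkfjqwejpqoiweropi[wepi}, \ref{7r6d7g6dfgfuerhju475387}, \ref{cvgdvcdgfgcvgvcsiyuegtou78} and Remark~\ref{5d656d50df90d9f0s=-f=00ijk}.
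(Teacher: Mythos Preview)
Your overall strategy matches the paper's: assume erasable, take a good blowing-up, rule out the edge $[-1^*,-1]$ via Remark~\ref{5d656d50df90d9f0s=-f=00ijk}(b), and treat the two remaining cases separately. For the blowing-up at the edge $[-3,-1^*]$ your plan is essentially what the paper does: after two contractions and one further good blowing-up one reaches $[-4,-2,-1^*,-1]=[-1,-1^*,-2,-4]$, and then Lemma~\ref{cvgdvcdgfgcvgvcsiyuegtou78} gives the contradiction exactly as you outline.

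The gap is in the case where the good blowing-up is at the distinguished vertex $v$. The resulting pair is T-shaped; after two contractions (a single contraction is not enough) one arrives at $[-1^*,0,-3]$, and the only possible good blowing-up of that yields $[-2,-1^*,-1,-3]$. No vertex of weight $-4$ ever appears in this branch, so the pair is never of the form $[-1,-1^*,x,-4]$ and Lemma~\ref{cvgdvcdgfgcvgvcsiyuegtou78} cannot be invoked. The paper closes this case with Lemma~\ref{5657df69a78d09a87se<F7>k4l;234l5k2;o}, which gives $\ell[-3,-1^*,-1,-2] < \ell[-2,-1^*,-1,-3]$ whenever the right-hand side is finite; combined with $\ell[-2,-1^*,-1,-3] < \ell[-3,-1^*,-1,-2]$ from the chain of good blowings-up, that is the contradiction. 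You do not list this lemma among your tools, and without it this case remains open.

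Two smaller corrections. First, blowing up at the edge $[-3,-1^*]$ produces $[-4,-1^*,-2,-1,-2]$, not $[-3,-1^*,-2,-1,-2]$: the $-3$ drops by one, and this is precisely where the $-4$ required by Lemma~\ref{cvgdvcdgfgcvgvcsiyuegtou78} comes from. Second, ``at least one of these reduced pairs'' is the wrong quantifier: since you do not know a priori which blowing-up is good, \emph{every} remaining case must be pushed to a contradiction.
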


\begin{proof}
We prove this by contradiction.
Let $(\Geul_0,e_0) = [-3,-1^*, -1, -2]$ and assume that
$(\Geul_0,e_0)$ is erasable.
As $\ell(\Geul_0,e_0)>0$, there exists a good blowing-up
$(\Geul_1,e_1)$ of $(\Geul_0,e_0)$.
There are three possibilities:
\begin{enumerata}

\item $(\Geul_1,e_1)$ is the blowing-up of $(\Geul_0,e_0)$ at $e_0$
\item $(\Geul_1,e_1)$ is the blowing-up of $(\Geul_0,e_0)$ at
the edge $[-1^*, -1]$
\item $(\Geul_1,e_1)$ is the blowing-up of $(\Geul_0,e_0)$ at
the edge $[-3,-1^*]$.

\end{enumerata}

Consider case~(a).
Let $(\bar \Geul_1, \bar e_1)$ be obtained from $(\Geul_1,e_1)$ by performing
two contractions at contractible vertices. Then
$(\bar \Geul_1, \bar e_1) = [ -1^*, 0, -3]$, so
$\ell(\bar \Geul_1, \bar e_1)>0$, so 
$(\bar \Geul_1, \bar e_1)$ has a good blowing-up $(\bar \Geul_2, \bar e_2)$.
By \ref{5d656d50df90d9f0s=-f=00ijk}(a),
the blowing-up of $(\bar \Geul_1, \bar e_1)$ at $\bar e_1$ is not good;
so $(\bar \Geul_2, \bar e_2)$ must be the blowing-up
of $(\bar \Geul_1, \bar e_1)$ at the edge $[ -1^*, 0]$,
i.e., $(\bar \Geul_2, \bar e_2) = [-2, -1^*, -1, -3]$.
Then
\begin{multline*}
\ell [-2, -1^*, -1, -3] = \ell (\bar \Geul_2, \bar e_2)
< \ell (\bar \Geul_1, \bar e_1)
= \ell (\Geul_1, e_1)< \ell (\Geul_0, e_0) \\
= \ell [-3,-1^*, -1, -2],
\end{multline*}
so $\ell [-2, -1^*, -1, -3] < \ell [-3,-1^*, -1, -2] < \infty$,
which contradicts \ref{5657df69a78d09a87se<F7>k4l;234l5k2;o}.
So case~(a) cannot occur.

In case~(b) we have $(\Geul_1,e_1) = [-3,-2,-1^*, -2, -2]$,
which is not erasable by \ref{5d656d50df90d9f0s=-f=00ijk}(b).
So case~(b) does not occur either.

In case~(c) we have $(\Geul_1,e_1) = [-4, -1^*, -2, -1, -2]$.
Let $(\bar \Geul_1, \bar e_1)$ be obtained from $(\Geul_1,e_1)$ by performing
two contractions at contractible vertices. Then
$(\bar \Geul_1, \bar e_1) = [ -4, -1^*, 0]$, so
$\ell(\bar \Geul_1, \bar e_1)>0$, so 
$(\bar \Geul_1, \bar e_1)$ has a good blowing-up $(\bar \Geul_2, \bar e_2)$.
In fact $(\bar \Geul_2, \bar e_2)$ must be the blowing-up of 
$(\bar \Geul_1, \bar e_1)$ at the edge $[ -1^*, 0]$, otherwise
\ref{5d656d50df90d9f0s=-f=00ijk}(a) gives a contradiction.
So $(\bar \Geul_2, \bar e_2) = [-4, -2, -1^*, -1] = [-1, -1^*, -2, -4]$
and consequently
$$
\ell [-1, -1^*, -2, -4] = \ell (\bar \Geul_2, \bar e_2)
< \ell (\bar \Geul_1, \bar e_1) = \ell (\Geul_1, e_1)
< \ell (\Geul_0, e_0)
= \ell [-3,-1^*, -1, -2].
$$
We conclude that
$$
\ell [-1, -1^*, -2, -4] < \ell [-3,-1^*, -1, -2] < \infty,
$$
which contradicts \ref{cvgdvcdgfgcvgvcsiyuegtou78}. So we are done.
\end{proof}

\begin{proposition} \label{bvbwvwbwvbevbrbtobvbvp}
Let $x \in \Integ \setminus \{-2\}$ and $y \in \Integ$.
Then the two weighted pairs
\begin{equation*} 
\setlength{\unitlength}{1mm}
\begin{picture}(26,17)(-3,-3)
\put(0,0){\circle*{1}}
\put(20,0){\circle*{1}}
\put(10,10){\circle*{1}}
\put(10,8.5){\makebox(0,0)[t]{\tiny $*$}}
\put(0,0){\line(1,0){20}}
\put(0,0){\line(1,1){10}}
\put(10,10){\line(1,-1){10}}
\put(0,-1){\makebox(0,0)[t]{\tiny $-1$}}
\put(20,-1){\makebox(0,0)[t]{\tiny $x$}}
\put(10,11){\makebox(0,0)[b]{\tiny $-1$}}
\end{picture}
\qquad\quad
\begin{picture}(26,13)(-3,-3)
\put(0,0){\circle*{1}}
\put(10,0){\circle*{1}}
\put(20,0){\circle*{1}}
\put(10,7){\circle*{1}}
\put(0,0){\line(1,0){20}}
\put(10,0){\line(0,1){7}}
\put(0,-1){\makebox(0,0)[t]{\tiny $-2$}}
\put(10,-1){\makebox(0,0)[t]{\tiny $-1$}}
\put(9,1){\makebox(0,0)[br]{\tiny $*$}}
\put(20,-1){\makebox(0,0)[t]{\tiny $-2$}}
\put(10,8){\makebox(0,0)[b]{\tiny $y$}}
\end{picture}
\end{equation*}
are not erasable.
\end{proposition}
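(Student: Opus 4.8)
The plan is to prove both non-erasability statements by the descent method used throughout this section: assume a pair is erasable, produce a good blowing-up by Lemma~\ref{dkfjqwejpqoiweropi[wepi}, enumerate all of its blowings-up, and show each one is non-erasable—either directly by Remark~\ref{5d656d50df90d9f0s=-f=00ijk}, or by contracting (Lemma~\ref{7r6d7g6dfgfuerhju475387}, which preserves $\ell$) down to one of the atomic non-erasable pairs already established, namely $[-3,-1^*,-1,-2]$ (Lemma~\ref{g3gg4gg6g8gg9ggg3g3g}) and the families $[-1^*,-1,x,-4]$ and $[-1,-1^*,x,-4]$ with $x\le-2$ (Lemmas~\ref{cvnbcvnbx,nmv.z,mnv,js746597645782} and~\ref{cvgdvcdgfgcvgvcsiyuegtou78}). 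A good blowing-up is erasable, so if no blowing-up is erasable there is no good one, which together with $\ell(\Geul,v)\ge1$ forces non-erasability. The preliminary step is therefore to record that both pairs satisfy $\ell\ge1$: deleting the distinguished vertex of the first pair leaves the chain $[-1,x]$, which blows down to the one-vertex graph $[x+1]\not\sim\emptyset$ since $x\neq-2$; deleting it from the second leaves three isolated vertices of weights $-2,-2,y$, not equivalent to $\emptyset$.

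For the first (triangular) pair I expect a short, self-contained argument. The key observation is that the first Betti number of the underlying graph is invariant under blowing-up and blowing-down of weighted graphs (each such move preserves the number of edges minus the number of vertices), so every graph produced from the triangle has a unique cycle. If at any stage one blows up at the distinguished vertex or at a bridge, the distinguished vertex leaves the cycle and, by induction, can never return to it; then deleting the final distinguished vertex cannot break the cycle, so the remainder has positive first Betti number and is not $\sim\emptyset$. This shows the pendant blowing-up (at the distinguished vertex) is non-erasable. There remain the two blowings-up at edges incident to the distinguished vertex. Blowing up the edge towards the weight-$x$ vertex creates a $4$-cycle whose surviving weight-$(-1)$ vertex is contractible; contracting it returns the original triangle, so $\ell$ is unchanged and this blowing-up is not good. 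Blowing up the edge towards the other weight-$(-1)$ vertex behaves the same way when $x=-1$ (the weight-$x$ vertex is now a contractible weight-$(-1)$ vertex, and contraction returns the triangle); when $x\neq-1$ the resulting $4$-cycle has no good blowing-up—its pendant blowing-up is excluded by the Betti argument, and blowing up either remaining edge yields a $5$-cycle to which Remark~\ref{5d656d50df90d9f0s=-f=00ijk}(b) applies if $x\le-3$ and Remark~\ref{5d656d50df90d9f0s=-f=00ijk}(a) applies if $x\ge0$ (and one checks its distinguished-vertex deletion $[-2,x,-2]$ is not $\sim\emptyset$). Hence no blowing-up of the triangle is good, and it is non-erasable.

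For the second (star-shaped) pair I would argue by cases on $y$. If $y\le-2$ then Remark~\ref{5d656d50df90d9f0s=-f=00ijk}(b) applies at once. If $y\ge-1$ I would again take a good blowing-up and run through the four possibilities (the pendant at the centre $v$ and the three edges at $v$). Blowing up the edge towards $c$ gives Remark~\ref{5d656d50df90d9f0s=-f=00ijk}(b) when $y=-1$, since the weight of $c$ then drops to $-2$; blowing up an edge towards a weight-$(-2)$ leaf and then contracting the weight-$(-1)$ leaf $c$ produces exactly $[-3,-1^*,-1,-2]$, which is non-erasable; and when $y\ge0$ the blowings-up at the centre and at the two weight-$(-2)$ leaves all leave the nonnegative leaf $c$ off the new distinguished vertex, so Remark~\ref{5d656d50df90d9f0s=-f=00ijk}(a) disposes of them.

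The hard part will be the two residual branches: blowing up the edge towards $c$ when $y\ge0$, and the pendant at the centre when $y=-1$. Both generate ``caterpillar'' pairs built around a degree-three vertex carrying two weight-$(-2)$ leaves; neither Remark settles these, and the offending vertex is not contractible. I expect to treat them exactly as in Lemmas~\ref{cvnbcvnbx,nmv.z,mnv,js746597645782} and~\ref{cvgdvcdgfgcvgvcsiyuegtou78}: organise an induction in which the only blowing-up that can be good is the one lowering $y$ (all others being removed by the two Remarks, and by contractions that strip the weight-$(-1)$ leaf to expose one of the atomic chains), so that erasability would force an infinite strictly descending sequence of the natural numbers $\ell$, which is impossible. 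Keeping track, at each stage, of which single blowing-up can be good while funnelling every caterpillar that appears into $[-3,-1^*,-1,-2]$, $[-1^*,-1,x,-4]$, or $[-1,-1^*,x,-4]$ is the main technical obstacle.
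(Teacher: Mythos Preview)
For the triangle your argument is correct but more involved than the paper's. The paper dispatches the subcase ``blow up the edge toward the other $(-1)$-vertex'' in one stroke: once that edge is blown up, the vertex of weight $x$ is never again adjacent to any distinguished vertex in the sequence, so its weight is frozen at $x$, while every other non-distinguished vertex acquires weight $\le -2$. Hence $\Geul_n\setminus\{e_n\}$ has no $(-1)$-vertex unless $x=-1$; this forces $x=-1$, and then the contraction back to the original triangle applies. No second-level analysis of the $5$-cycle is needed.

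For the star there is a genuine gap. Your two ``hard parts'' (blowing up the edge toward $c$ when $y\ge 0$, and the pendant blow-up at $v$ when $y=-1$) both produce caterpillars with a degree-$3$ vertex flanked by two $(-2)$-leaves, and your sketched induction does not close: contracting the one available $(-1)$-leaf does not land you in any of the linear families $[-3,-1^*,-1,-2]$, $[-1^*,-1,x,-4]$, $[-1,-1^*,x,-4]$, but in another caterpillar of the same kind (for instance, after the pendant blow-up with $y=-1$ and contraction of $c$ you get the pair with $e_1$ distinguished, attached to a degree-$3$ vertex $v$ of weight $-1$ carrying two $(-2)$-leaves), and from there the case analysis branches with no visible terminating invariant. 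The paper bypasses this entirely with the same frozen-vertex observation used above: in both of these cases the two $(-2)$-leaves are never adjacent to any distinguished vertex in the sequence, so they retain weight $-2$ throughout; consequently the connected component of $\Geul_n\setminus\{e_n\}$ containing $v$ always has two $(-2)$-leaves attached to a central vertex together with a residual branch. Such a graph is never $\sim\emptyset$, since its only candidate $(-1)$-vertices lie in the branch and contracting them leaves a graph of the form $[-2,t,-2]$. This replaces your open-ended induction by a direct structural statement about $\Geul_n\setminus\{e_n\}$; the only earlier lemma actually invoked is Lemma~\ref{g3gg4gg6g8gg9ggg3g3g}, for the remaining case where one blows up an edge toward a $(-2)$-leaf.
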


\begin{proof}
Let $(\Geul,v)$ be the weighted pair which looks like a triangle, in the 
statement of the proposition, and (proceeding by contradiction)
assume that $(\Geul,v)$ is erasable.
Since $x \neq -2$, we have $\Geul\setminus\{v\} \not\sim \emptyset$,
so $\ell(\Geul,v) > 0$.  Pick a sequence \eqref{37463786457823645897}
such that $\Geul_n \setminus \{e_n\} \sim \emptyset$ and such that 
$n = \ell(\Geul,v)$;
note that $(\Geul_1, e_1)$ is a good blowing-up of $(\Geul,v)$.
If $(\Geul_1, e_1)$ is the blowing-up of $(\Geul,v)$ at $v$
then $\Geul_n \setminus \{e_n\}$ contains a simple circuit, which 
contradicts $\Geul_n \setminus \{e_n\} \sim \emptyset$;
so $(\Geul_1, e_1)$ is the blowing-up of $(\Geul,v)$
at one of the two edges incident to $v$.
Consequently, $(\Geul_1,e_1)$ is either as in \eqref{dkfjq;kwje;iqj;w}
or as in \eqref{8508ufjdht893475}, below.

Consider the case where $(\Geul_1,e_1)$ is as follows:
\begin{equation} \label{dkfjq;kwje;iqj;w}
\setlength{\unitlength}{1mm}
\raisebox{-8\unitlength}{\begin{picture}(26,17)(-3,-3)
\put(0,0){\circle*{1}}
\put(20,0){\circle*{1}}
\put(10,10){\circle*{1}}
\put(0,0){\line(1,0){20}}
\put(0,0){\line(1,1){10}}
\put(10,10){\line(1,-1){10}}
\put(0,-1){\makebox(0,0)[t]{\tiny $-1$}}
\put(21,-1){\makebox(0,0)[t]{\tiny $x-1$}}
\put(10,11){\makebox(0,0)[b]{\tiny $-2$}}
\put(15,5){\circle*{1}}
\put(14,5){\makebox(0,0)[tr]{\tiny $*$}}
\put(16,5){\makebox(0,0)[bl]{\tiny $-1$}}
\put(0,1){\makebox(0,0)[br]{\tiny $w$}}
\end{picture}}
\end{equation}
Then $w$ is a contractible vertex and if $(\bar \Geul_1,\bar e_1)$  denotes
the contraction of $(\Geul_1,e_1)$ at $w$ then 
$(\bar \Geul_1,\bar e_1)$ is isomorphic\footnote{The definition of
\textit{isomorphism of weighted pairs\/} is the obvious one.}
to $(\Geul,v)$.
This isomorphism implies that $\ell (\bar \Geul_1,\bar e_1)
= \ell (\Geul,v)$ but on the other hand \ref{7r6d7g6dfgfuerhju475387}
implies that 
$\ell (\bar \Geul_1,\bar e_1) = \ell (\Geul_1,e_1) < \ell (\Geul,v)$.
This contradiction shows that $(\Geul_1,e_1)$ cannot be as in 
\eqref{dkfjq;kwje;iqj;w}.

The only other possibility is that $(\Geul_1,e_1)$ be as follows:
\begin{equation} \label{8508ufjdht893475}
\setlength{\unitlength}{1mm}
\raisebox{-8\unitlength}{\begin{picture}(26,17)(-3,-3)
\put(0,0){\circle*{1}}
\put(20,0){\circle*{1}}
\put(10,10){\circle*{1}}
\put(6,5){\makebox(0,0)[tl]{\tiny $*$}}
\put(0,0){\line(1,0){20}}
\put(0,0){\line(1,1){10}}
\put(10,10){\line(1,-1){10}}
\put(0,-1){\makebox(0,0)[t]{\tiny $-2$}}
\put(20,-1){\makebox(0,0)[t]{\tiny $x$}}
\put(10,11){\makebox(0,0)[b]{\tiny $-2$}}
\put(5,5){\circle*{1}}
\put(4,5){\makebox(0,0)[br]{\tiny $-1$}}
\put(20,1){\makebox(0,0)[bl]{\tiny $w$}}
\end{picture}}
\end{equation}
Now we must have $x=-1$, otherwise $\Geul_n \setminus \{e_n\}$ would not
contain any vertex of weight $(-1)$ and hence would not be equivalent
to the empty weighted graph.
So $w$ is a contractible vertex and 
the contraction $(\bar \Geul_1,\bar e_1)$  of $(\Geul_1,e_1)$ at $w$
is isomorphic to $(\Geul,v)$.
This leads to the same contradiction as in the first case, so
we have shown that $(\Geul,v)$ is not erasable.

\bigskip
From now-on let $(\Geul,v)$ be the weighted pair on the right-hand-side,
in the statement of the proposition; proceeding again by contradiction,
assume that $(\Geul,v)$ is erasable.
It is clear that $\Geul\setminus\{v\} \not\sim \emptyset$,
so $\ell(\Geul,v) > 0$.  Pick a sequence \eqref{37463786457823645897}
such that $\Geul_n \setminus \{e_n\} \sim \emptyset$ and such that 
$n = \ell(\Geul,v)$, and note that $(\Geul_1, e_1)$ is a good
blowing-up of $(\Geul,v)$.
One of the following holds:
\begin{enumerata}

\item $(\Geul_1, e_1)$ is the blowing-up of $(\Geul, v)$
at the edge which contains the vertex of weight $y$

\item $(\Geul_1, e_1)$ is the blowing-up of $(\Geul, v)$
at the distinguished vertex $v$

\item $(\Geul_1, e_1)$ is the blowing-up of $(\Geul, v)$
at an edge which does not contain the vertex of weight $y$.

\end{enumerata}

In case~(a), one of the connected components of $\Geul_n \setminus \{e_n\}$ has the following shape:
\begin{equation} \label{76g7676f76g76h76d76}
\setlength{\unitlength}{1mm}
\raisebox{-7\unitlength}{\begin{picture}(26,16)(-3,-3)
\put(0,0){\circle*{1}}
\put(10,0){\circle*{1}}
\put(20,0){\circle*{1}}
\put(0,0){\line(1,0){20}}
\put(10,0){\line(0,1){5}}
\put(0,-1){\makebox(0,0)[t]{\tiny $-2$}}
\put(10,-1.5){\makebox(0,0)[t]{\tiny $z$}}
\put(9,1){\makebox(0,0)[br]{\tiny $v$}}
\put(20,-1){\makebox(0,0)[t]{\tiny $-2$}}
\put(12,9){\makebox(0,0)[l]{\tiny $\Beul$}}
\put(10,9){\oval(3,10)}
\end{picture}} \qquad \text{(for some $z \in \Integ$)}
\end{equation}
where $\Beul$ represents a (possibly empty) branch
of $\Geul_n \setminus \{e_n\}$ at $v$;
so the weighted graph~\eqref{76g7676f76g76h76d76} is equivalent to $\emptyset$.
However, \eqref{76g7676f76g76h76d76} is not equivalent to $\emptyset$.
Indeed, if it were, then we would have
$\Beul \sim \emptyset$ and in fact \eqref{76g7676f76g76h76d76}
would contract to
\begin{equation} \label{6476472ghfjdhsjfy347887378}
\setlength{\unitlength}{1mm}
\raisebox{-2\unitlength}{\begin{picture}(26,6)(-3,-3)
\put(0,0){\circle*{1}}
\put(10,0){\circle*{1}}
\put(20,0){\circle*{1}}
\put(0,0){\line(1,0){20}}
\put(0,-1){\makebox(0,0)[t]{\tiny $-2$}}
\put(10,-1.5){\makebox(0,0)[t]{\tiny $t$}}
\put(20,-1){\makebox(0,0)[t]{\tiny $-2$}}
\end{picture}} \qquad \text{(for some $t \in \Integ$)}
\end{equation}
but clearly the graph \eqref{6476472ghfjdhsjfy347887378} is not
equivalent to $\emptyset$.
So \eqref{76g7676f76g76h76d76}
is not equivalent to $\emptyset$ either,
which rules out case~(a).

In case~(b), $\Geul_n \setminus \{e_n\}$ has a connected component as follows:
\begin{equation} \label{7x67x6x6454x35x76x7x87x}
\setlength{\unitlength}{1mm}
\raisebox{-12\unitlength}{\begin{picture}(26,24)(-3,-14)
\put(0,0){\circle*{1}}
\put(10,0){\circle*{1}}
\put(20,0){\circle*{1}}
\put(10,7){\circle*{1}}
\put(0,0){\line(1,0){20}}
\put(10,0){\line(0,1){7}}
\put(0,-1){\makebox(0,0)[t]{\tiny $-2$}}
\put(9,-1){\makebox(0,0)[tr]{\tiny $z$}}
\put(20,-1){\makebox(0,0)[t]{\tiny $-2$}}
\put(10,8){\makebox(0,0)[b]{\tiny $y$}}
\put(10,0){\line(0,-1){5}}
\put(10,-9){\oval(3,10)}
\put(12,-9){\makebox(0,0)[l]{\tiny $\Beul$}}
\end{picture}}
\qquad \text{(for some $z \in \Integ$)}
\end{equation}
where $\Beul$ might be empty and 
all vertices of $\Beul$ have weight strictly less than $-1$.
This implies that the weighted graph~\eqref{7x67x6x6454x35x76x7x87x} is equivalent to the empty graph.
However, \eqref{7x67x6x6454x35x76x7x87x} is not equivalent to $\emptyset$.
Indeed, if it were then we would have $\Beul \sim \emptyset$, so in fact $\Beul = \emptyset$,
then \eqref{7x67x6x6454x35x76x7x87x} would be of the form
\eqref{76g7676f76g76h76d76} and hence would not be equivalent to $\emptyset$.
So \eqref{7x67x6x6454x35x76x7x87x} is not equivalent to $\emptyset$
and case~(b) is ruled out.

Consequently case~(c) must occur, i.e.,
$(\Geul_1, e_1)$ must be
the blowing-up of $(\Geul, v)$ at an edge which does not contain the vertex
of weight $y$.
Note that, although there are two such edges, only one case
needs to be considered because an automorphism of $(\Geul, v)$ interchanges
the two edges.
Also observe that, if the vertex of weight $y$ is called $w$, then
$w$ has the same weight in $\Geul$ and in $\Geul_n$;
consequently $y = -1$, because $\Geul_n \setminus \{e_n\}$ must have
a vertex of weight $-1$ and all vertices of 
$\Geul_n \setminus \{e_n, w \}$ have weight strictly less than $-1$.
So $(\Geul_1, e_1)$ is the following weighted pair:
\begin{equation*} 
\setlength{\unitlength}{1mm}
\raisebox{-5\unitlength}{\begin{picture}(36,13)(-3,-3)
\put(0,0){\circle*{1}}
\put(10,0){\circle*{1}}
\put(20,0){\circle*{1}}
\put(30,0){\circle*{1}}
\put(10,7){\circle*{1}}
\put(0,0){\line(1,0){30}}
\put(10,0){\line(0,1){7}}
\put(0,-1){\makebox(0,0)[t]{\tiny $-2$}}
\put(10,-1){\makebox(0,0)[t]{\tiny $-2$}}
\put(20,1){\makebox(0,0)[b]{\tiny $*$}}
\put(20,-1){\makebox(0,0)[t]{\tiny $-1$}}
\put(30,-1){\makebox(0,0)[t]{\tiny $-3$}}
\put(9,7){\makebox(0,0)[r]{\tiny $-1$}}
\put(11,7){\makebox(0,0)[l]{\tiny $w$}}
\end{picture}}
\end{equation*}
Then $w$ is a contractible vertex and
the contraction of $(\Geul_1, e_1)$ at $w$ is
$(\bar \Geul_1, \bar e_1) = [-3,-1^*, -1, -2]$. Then
$ \ell [-3,-1^*, -1, -2]
= \ell (\bar \Geul_1, \bar e_1)
= \ell (\Geul_1, e_1)
< \ell (\Geul, v) < \infty$,
which implies that $[-3,-1^*, -1, -2]$ is erasable.
This contradicts \ref{g3gg4gg6g8gg9ggg3g3g}, so the proof is complete.
\end{proof}

The next proof requires familiarity with the classical notion of dual graph (see for instance~\ref{difupawejk}).
If $D$ is an SNC-divisor of a nonsingular projective surface $S$,
we write $\Geul(D,S)$ for the dual graph of $D$ in $S$.
Recall in particular that $\Geul(D,S)$ is a weighted graph.
See \ref{ehwgehgwhepcopdopsdopsdo} for the definition of ``chain''.

\begin{proposition} \label {656758765876dfkasbkjgfakh67664}
No triple $(Y_0,D, L)$ satisfies the following conditions \mbox{\rm(i--iii):}
\begin{enumerate}

\item[(i)] $Y_0$ is a nonsingular projective surface and
$D, L \subset Y_0$ are irreducible curves.

\item[(ii)]  $L$ is nonsingular, $L^2 = 0$ and $D \cdot L = 2$.

\item[(iii)] There exists a chain 
$Y_0 \xleftarrow{ \sigma_1 } Y_1 \xleftarrow{ \sigma_2 } \cdots \xleftarrow{ \sigma_N } Y_{N}$
such that $N\ge1$ and, if $D_N \subset Y_N$, $L_N \subset Y_N$, and $G_i \subset Y_N$ denote respectively
the strict transforms of $D$, of $L$, and of the exceptional curve of $\sigma_i$, then:
\begin{itemize}

\item the subset $D_N \cup L_N \cup G_1 \cup \dots \cup G_{N-1}$
of $Y_N$ is the exceptional locus of a birational morphism $Y_N \to S$
where $S$ is a nonsingular projective surface;

\item $L_N^2 \neq -1$ in $Y_N$.

\end{itemize}
\end{enumerate}
\end{proposition}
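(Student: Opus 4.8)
Assume for contradiction that a triple $(Y_0,D,L)$ satisfies (i)--(iii). For $0\le i\le N$ let $D_i,L_i\subset Y_i$ be the strict transforms of $D,L$, let $G_j\subset Y_i$ (for $j\le i$) be the strict transform of the exceptional curve of $\sigma_j$, and, whenever the divisor $D_i+L_i+G_1+\dots+G_i$ is SNC, write $\Geul_i=\Geul(D_i+L_i+G_1+\dots+G_i,\,Y_i)$ for its dual graph. Because (iii) requires a \emph{chain} (see~\ref{ehwgehgwhepcopdopsdopsdo}), each $\sigma_{i+1}$ blows up a point lying on the last exceptional curve $G_i$; hence, as long as SNC holds, $\sigma_{i+1}$ is exactly a blowing-up of the weighted pair $(\Geul_i,G_i)$ at the distinguished vertex $G_i$ or at an edge incident to it. The plan is to produce, somewhere along this chain, a weighted pair that is simultaneously erasable and equal to one of the two non-erasable shapes of Proposition~\ref{bvbwvwbwvbevbrbtobvbvp}.

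First I would extract the content of the contractibility hypothesis. The divisor $D_N+L_N+G_1+\dots+G_{N-1}$ is the exceptional locus of a birational morphism $Y_N\to S$ of nonsingular surfaces, so it is SNC and its dual graph is obtained from the empty graph by blowings-up; thus $\Geul_N\setminus\{G_N\}\sim\emptyset$, i.e.\ $\ell(\Geul_N,G_N)=0$. Consequently, if $s$ denotes the first index at which $\Geul_s$ is defined, the tail $(\Geul_s,G_s)\bup\cdots\bup(\Geul_N,G_N)$ is a sequence of blowings-up of weighted pairs witnessing that $(\Geul_s,G_s)$ is \emph{erasable}. Next I would pin down $P_1$: since $L^2=0$ while $L_N$ is contracted (so $L_N^2<0$), the curve $L$ must be blown up, whence $P_1\in L$; and $L_N^2\neq-1$ forces $L_N^2\le-2$, so $L$ is blown up at least twice and $P_2=L_1\cap G_1$. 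Using $D\cdot L=2$ one checks $P_1\in D$ (otherwise $D_N$ meets $L_N$ in two distinct points, so $\Geul_N\setminus\{G_N\}$ is not SNC). Thus $P_1\in D\cap L$, and, $D_N$ being a smooth component of an SNC exceptional locus, $\Sing D\subseteq\{P_1\}$ with $e_{P_1}(D)\in\{1,2\}$.

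The two shapes now appear according to how $D$ meets $L$ at $P_1$. If $D$ is smooth at $P_1$ with $i_{P_1}(D,L)=2$ (a tangency), then $\sigma_1$ creates a triple point $D_1\cap L_1\cap G_1$ and the forced blow-up $\sigma_2$ of $P_2=L_1\cap G_1$ resolves it; the first SNC pair $(\Geul_2,G_2)$ is then the star-shaped pair of Proposition~\ref{bvbwvwbwvbevbrbtobvbvp}, with two legs $G_1,L_2$ of weight $-2$ and remaining vertex $D_2$ of weight $D^2-2$. If instead $D\cap L$ consists of two distinct transversal points, then already $(\Geul_1,G_1)$ is the triangular pair of Proposition~\ref{bvbwvwbwvbevbrbtobvbvp}, with distinguished vertex $G_1$ of weight $-1$, a neighbour $L_1$ of weight $-1$, and a neighbour $D_1$ of weight $D^2-1$; here the requirement $x=D^2-1\neq-2$ holds as long as $D^2\neq-1$. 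In both situations the pair just exhibited is erasable by the previous paragraph yet non-erasable by Proposition~\ref{bvbwvwbwvbevbrbtobvbvp}, a contradiction. The singular alternative $e_{P_1}(D)=2$ is disposed of directly: a node gives $D_1\cdot G_1=2$ at two points which the chain—driven onto the branch $L_1\cap G_1$ by $L_N^2\neq-1$—never separates, and a cusp leaves an unresolved tangency of $D_1$ with $G_1$ for the same reason; either way $\Geul_N\setminus\{G_N\}$ fails to be SNC. Finally the leftover value $D^2=-1$ in the two-point case is handled by carrying out the forced second blow-up of $L$ and noting that the surviving intersection at the second point keeps the configuration from being a tree.

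The step I expect to be the main obstacle is the one legitimising this whole translation: one must verify that, once an SNC stage is reached, SNC persists along the rest of the chain, so that its tail really is a sequence of blowings-up of weighted pairs and the erasability conclusion of the second paragraph genuinely applies. This should hold because $L$ is smooth and the only sources of non-SNC behaviour, namely $D\cap L$ and $\Sing D$, are concentrated at $P_1$ and resolved in the first one or two steps; but making this precise, together with the multiplicity bookkeeping in the singular-$D$ subcases and the verification in the two-point case that contracting $G_N$ cannot destroy the relevant cycle, is where the real work lies—and it is exactly here that appealing to the precise forbidden shapes of Proposition~\ref{bvbwvwbwvbevbrbtobvbvp}, rather than to a crude count of cycles, is what makes the argument close.
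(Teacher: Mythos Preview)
Your approach is essentially the paper's: locate $P_1\in D\cap L$ and $P_2\in L_1\cap G_1$, rule out $e_{P_1}(D)=2$, and then split into the tangency case (star pair) and the two-point case (triangle pair), invoking Proposition~\ref{bvbwvwbwvbevbrbtobvbvp} for the contradiction. Your worry about SNC persistence is not a real obstacle: once $\Delta_i$ is SNC, blowing up a point of $\supp\Delta_i$ keeps it SNC, so the tail is automatically a sequence of blowings-up of weighted pairs.

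The genuine gap is your handling of the two-point case when $x=D_1^2=-2$. Proposition~\ref{bvbwvwbwvbevbrbtobvbvp} excludes $x=-2$ for a reason: the triangle with weights $(-1,-1,-2)$ \emph{is} erasable (deleting the distinguished vertex leaves $[-1,-2]\sim\emptyset$). Your proposed fix, that ``the surviving intersection at the second point keeps the configuration from being a tree'', does not work: the unique cycle in $\Geul_N$ may well pass through $G_N$, and then $\Geul_N\setminus\{G_N\}$ is a tree. What actually rules this case out is a weight count, not a cycle count. The paper does this upfront: in $\Omega=D_N+L_N+G_1+\cdots+G_{N-1}$ one has $L_N^2\le-2$ by hypothesis, and $G_i^2\le-2$ for every $i<N$ because the chain condition forces $P_{i+1}\in G_i$; since $\Omega$ is the exceptional locus of a birational morphism to a smooth surface it must contain a $(-1)$-curve, so necessarily $D_N^2=-1$. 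Then $x=D_1^2\ge D_N^2=-1$, hence $x\neq-2$, and Proposition~\ref{bvbwvwbwvbevbrbtobvbvp} applies directly with no residual case. Inserting this one observation (and dropping the cycle argument) makes your outline complete.
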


\begin{proof} 
By contradiction, assume that $(Y_0,D, L)$ exists and consider
$Y_0 \xleftarrow{ \sigma_1 } Y_1 \xleftarrow{ \sigma_2 } \cdots
\xleftarrow{ \sigma_N } Y_{N}$ as in the statement,
where $\sigma_i : Y_i \to Y_{i-1}$ is 
the blowing-up at the point $Q_i \in Y_{i-1}$.
Let $D_i, L_i \subset Y_i$ be the strict transforms of $D_0 = D$ and 
$L_0=L$ respectively;
we write $G_i \subset Y_i$ for the exceptional curve of $\sigma_i$ and,
if $i<j \le N$,
the strict transform of $G_i$ in  $Y_j$ is also denoted by $G_i \subset Y_j$.
For each $i \in \{ 1, \dots, N\}$,
let $\Delta_i$ denote the reduced divisor $D_i + L_i + G_1 + \cdots + G_i$ of $Y_i$.
Let $\Omega$ denote the reduced divisor $D_N + L_N + G_1 + \dots + G_{N-1}$ of $Y_N$,
i.e., $\Omega = \Delta_N - G_N$.

As $\supp(\Omega)$ is the exceptional locus of a birational morphism,
$\Omega$ is an SNC-divisor of $Y_N$ which has at least one $(-1)$-component.
Because $L_N^2 \neq -1$, it follows that $D_N^2 = -1$ and that $D_N$ is the only $(-1)$-component of $\Omega$.
Moreover, there must hold $L_N^2 < -1$ (so $N \ge 2$, $Q_1 \in L_0$ and $Q_2 \in L_1)$.
Also note that
$D_N \cdot L_N \le 1 < 2 = D_0\cdot L_0$, so $Q_1 \in D_0 \cap L_0$.
We record:
\begin{equation} \label{ccuizuizuxiwmw,me.,qw.,qm.q,.,}
\text{$Q_1 \in L_0 \cap D_0$ and $Q_2 \in L_1 \cap G_1$.}
\end{equation}

Suppose that $Q_1$ is a singular point of $D_0$.  
Then $D_0 \cdot L_0 = 2$ implies that $D_1 \cap L_1 = \emptyset$ and that
$D_1 \cdot G_1 = 2$; then \eqref{ccuizuizuxiwmw,me.,qw.,qm.q,.,}
impies that $Q_2 \notin D_1$
and hence that $(D_N \cdot G_1)_{Y_N} = (D_1 \cdot G_1)_{Y_1} > 1$,
which contradicts the
fact that $\Omega$ is an SNC-divisor. This shows that $Q_1$ is a regular point of $D_0$.
As $D_N$ is nonsingular and $\sigma_1 \circ \dots \circ \sigma_N$ is a chain, it follows that $D_0$ is nonsingular.

Consider the case where $D_0 \cap L_0$ is one point (so it is $Q_1$).
Then it follows from \eqref{ccuizuizuxiwmw,me.,qw.,qm.q,.,} that
$\Delta_2 = D_2 + L_2 + G_1 + G_2$ is an SNC-divisor of $Y_2$ whose dual
graph is 
\begin{equation} \label {cqvcqvcavcaqvcqvcavswcvqcvcw}
\setlength{\unitlength}{1mm}
\Geul( \Delta_2, Y_2)\,: \quad
\raisebox{-4\unitlength}{\begin{picture}(26,13)(-3,-3)
\put(0,0){\circle*{1}}
\put(10,0){\circle*{1}}
\put(20,0){\circle*{1}}
\put(10,7){\circle*{1}}
\put(0,0){\line(1,0){20}}
\put(10,0){\line(0,1){7}}
\put(0,-1){\makebox(0,0)[t]{\tiny $-2$}}
\put(10,-1){\makebox(0,0)[t]{\tiny $-1$}}
\put(9,1){\makebox(0,0)[br]{\tiny $*$}}
\put(20,-1){\makebox(0,0)[t]{\tiny $-2$}}
\put(10,8){\makebox(0,0)[b]{\tiny $y$}}
\end{picture}}
\end{equation}
where $y = D_2^2 \in \Integ$ 
and where $G_2$ is the vertex indicated by an asterisk $*$.
Then
$\Delta_i = D_i + L_i + G_1 + \cdots + G_i$ is an SNC-divisor of $Y_i$
for each $i \in \{2, \dots, N\}$, and
$$
\big( \Geul( \Delta_2, Y_2), G_2 \big)
\bup \cdots \bup
\big( \Geul( \Delta_N, Y_N), G_N \big) = ( \Geul, v)
$$
is a sequence of blowings-up of weighted pairs
(cf.~\ref{dfj;wieufajfklaj;dklfj;89868}).
The weighted graph $\Geul \setminus \{v\}$ is equal to $\Geul( \Omega, Y_N )$,
which is equivalent to the empty weighted graph since 
$\supp(\Omega)$ is the exceptional locus of a birational morphism.
So the weighted pair $\big( \Geul( \Delta_2, Y_2), G_2 \big)$ is erasable,
i.e., the weighted pair pictured in \eqref{cqvcqvcavcaqvcqvcavswcvqcvcw} is erasable,
and this contradicts Proposition~\ref{bvbwvwbwvbevbrbtobvbvp}.

\bigskip
This shows that $D_0 \cap L_0$ contains more than one point.
Then it follows from \eqref{ccuizuizuxiwmw,me.,qw.,qm.q,.,} that
$\Delta_1 = D_1 + L_1 + G_1$ is an SNC-divisor of $Y_1$ whose dual
graph is
\begin{equation} \label {83274nf63hfjf}
\setlength{\unitlength}{1mm}
\Geul( \Delta_1, Y_1) \,: \quad
\raisebox{-6\unitlength}{\begin{picture}(26,17)(-3,-3)
\put(0,0){\circle*{1}}
\put(20,0){\circle*{1}}
\put(10,10){\circle*{1}}
\put(10,8.5){\makebox(0,0)[t]{\tiny $*$}}
\put(0,0){\line(1,0){20}}
\put(0,0){\line(1,1){10}}
\put(10,10){\line(1,-1){10}}
\put(0,-1){\makebox(0,0)[t]{\tiny $-1$}}
\put(20,-1){\makebox(0,0)[t]{\tiny $x$}}
\put(10,11){\makebox(0,0)[b]{\tiny $-1$}}
\end{picture}}
\end{equation}
where $x = D_1^2 \ge D_N^2 = -1$ and where $G_1$ is the vertex indicated
by the asterisk.
Then
$$
\big( \Geul( \Delta_1, Y_1), G_1 \big)
\bup \cdots \bup
\big( \Geul( \Delta_N, Y_N), G_N \big) = ( \Geul, v)
$$
is a sequence of blowings-up of weighted pairs such that 
$\Geul \setminus \{v\} = \Geul( \Omega, Y_N ) \sim \emptyset$.
So the weighted pair $\big( \Geul( \Delta_1, Y_1), G_1 \big)$ is erasable,
i.e., the weighted pair pictured in \eqref{83274nf63hfjf} is erasable.
This contradicts Proposition~\ref{bvbwvwbwvbevbrbtobvbvp}, so the proof is complete.
\end{proof}


\section{Existence of a dicritical of degree $1$}
\label {ExistDicDeg1}

\begin{parag}  \label {254desq3sa3235fa}
{\bf Dicriticals.} Let $\Lambda$ be a pencil without fixed components
on a nonsingular projective surface $S$ and
$\Phi_\Lambda : S \dasharrow \proj^1$ the rational map given
by $\Lambda$.
Choose a commutative diagram
\begin{equation}  \label{sjfpq23jq;awsdsa}
\xymatrix{
S  \ar @{-->} [d]_{ \Phi_\Lambda }
&  {\tilde S}  \ar [dl]^{ \Psi_\Lambda } \ar[l]_{ \pi } \\
{\proj^1}
}
\end{equation}
where $\tilde S$ is a nonsingular projective surface,
$\pi$ is a birational morphism and $\Psi_\Lambda$ is a morphism,
and consider the exceptional locus
$\Eeul = \exc(\pi) \subset \tilde S$ of $\pi$.
The horizontal\footnote{A curve $E \subset \tilde S$ is {\it vertical\/}
if $\Psi_\Lambda(E)$ is a point, {\it horizontal\/} otherwise.}
curves included in $\Eeul$ are called the {\it dicriticals\/} of 
diagram~\eqref{sjfpq23jq;awsdsa}.
If $E \subseteq \Eeul$ is a dicritical of \eqref{sjfpq23jq;awsdsa}
then the composition $E \hookrightarrow \tilde S \xrightarrow{\Psi_\Lambda} \proj^1$
is a surjective morphism $f_E : \proj^1 \to \proj^1$; the positive integer 
$\deg( f_E )$ is called the {\it degree of the dicritical $E$}.

Suppose that diagram~\eqref{sjfpq23jq;awsdsa} has $s\ge0$ dicriticals,
of degrees $d_1, \dots, d_s$ respectively. 
Then the number $s$ and the unordered $s$-tuple $[ d_1, \dots, d_s ]$
are uniquely determined by $\Lambda$, i.e., are independent of the choice
of a diagram~\eqref{sjfpq23jq;awsdsa} which resolves the points of
indeterminacy of $\Phi_\Lambda$.
So it makes sense to speak of the number of dicriticals
``of $\Lambda$'', and of the degrees of these dicriticals.
\end{parag}

The main objective of this section is to prove:
 
\begin{theorem}  \label {kjfkjeopozxpozx}
Let $C \subset \proj^2$ be a unicuspidal rational curve
with distinguished point $P$ and let $\Lambda_C$ be the
unique pencil on $\proj^2$ such that $C \in \Lambda_C$
and $\Bs(\Lambda_C) = \{P\}$.
If $C$ is of nonnegative type then $\Lambda_C$ has
either $1$ or $2$ dicriticals, and at least one of them has degree $1$.
\end{theorem}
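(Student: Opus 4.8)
The plan is to work on the surface $S_m$ produced by Proposition~\ref{dif4938998r984j20w2j}, which applies since $C$ is of nonnegative type. There $\Lambda_m$ is a base-point-free $\proj^1$-ruling and $\pi=\pi_1\circ\cdots\circ\pi_m:S_m\to\proj^2$ has exceptional locus $E_1+\cdots+E_m$. Taking the diagram~\eqref{sjfpq23jq;awsdsa} with $\tilde S=S_m$, this $\pi$, and $\Psi_{\Lambda_C}$ the morphism defined by $\Lambda_m$, the dicriticals of $\Lambda_C$ are precisely the horizontal curves among $E_1,\dots,E_m$. By \ref{dif4938998r984j20w2j}\eqref{ghghghghtytr} and~\eqref{hghshsghsghshgs}, $E_m$ is always horizontal and at most one $E_i$ with $i<m$ is horizontal, so $\Lambda_C$ has either $1$ or $2$ dicriticals; this settles the first assertion. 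Moreover, since $C_m\in\Lambda_m$ is linearly equivalent to a general fibre, the degree of a dicritical $E_i$ equals the intersection number $E_i\cdot C_m$.

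Next I would dispose of the positive-type case. By \ref{dif4938998r984j20w2j}\eqref{positive-char}, $E_m$ is a section of $\Lambda_m$ exactly when $C$ is of positive type; when this holds $E_m\cdot C_m=1$, so $E_m$ is a degree-$1$ dicritical and we are done. Since $r_i=1$ for every $i>n$ (past the minimal resolution $C_{i-1}$ is smooth), the formula of~\ref{sdopfiuqpweijdfklsmdn897823487} gives $0=C_m^2=\tilde\nu(C)-(m-n)$, i.e.\ $\tilde\nu(C)=m-n$. Hence the only remaining case is $\tilde\nu(C)=0$, where $m=n$, the smooth fibre $C_m$ is tangent to $E_m$ at the single point $P_{m+1}$, and $E_m\cdot C_m=r_m=r_n\ge 2$, so $E_m$ has degree $\ge 2$.

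The heart of the argument is the case $\tilde\nu(C)=0$, which I would treat by contradiction: assume $\Lambda_C$ has \emph{no} dicritical of degree~$1$, and aim to manufacture a triple $(Y_0,D,L)$ satisfying conditions (i)--(iii) of Proposition~\ref{656758765876dfkasbkjgfakh67664}, contradicting it. The natural choice is $L=C_m$, which is nonsingular with $L^2=0$, together with a horizontal curve $D$ meeting the fibre $L$ in exactly two points, so that $D\cdot L=2$. For the chain I would use the continuation~\eqref{diufpq3poaksdjfi8888} of the resolution, $S_m\leftarrow\cdots\leftarrow S_N$, the embedded resolution separating $C_m$ from the dicritical; after it the strict transform of $L$ satisfies $L_N^2=\temb(C)=\tilde\nu(C)-r_n=-r_n\le-2$, so in particular $L_N^2\ne-1$, as demanded. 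Depending on whether $D$ meets $C_m$ in two distinct points or tangentially, the first one or two blow-ups produce a divisor whose dual graph is the triangle~\eqref{83274nf63hfjf} or the fork~\eqref{cqvcqvcavcaqvcqvcavswcvqcvcw}; one then checks that the full contracted divisor $D_N+L_N+G_1+\cdots+G_{N-1}$ is the exceptional locus of a birational morphism onto a smooth surface, which is exactly the hypothesis making the erasable-weighted-pair machinery of Section~\ref{SecAppendix} bite, since Proposition~\ref{bvbwvwbwvbevbrbtobvbvp} declares both~\eqref{83274nf63hfjf} and~\eqref{cqvcqvcavcaqvcqvcavswcvqcvcw} non-erasable.

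The step I expect to be the main obstacle is the construction of the curve $D$ with $D\cdot L=2$ and the verification of hypothesis~(iii). Because $E_m$ has degree $r_m$, which may well exceed $2$, one cannot simply take $D=E_m$; instead I would have to exhibit a genuine degree-$2$ multisection, analysing the horizontal curves together with the section supplied by Gizatullin's Theorem~\ref{jhdhsgghgaqaqoaiaaqo} and exploiting that, under our standing assumption, \emph{no} section of $\Lambda_m$ is $\pi$-exceptional. One must then arrange the chain so that the resulting configuration genuinely contracts to a smooth surface and so that its initial local shape is one of the forbidden pairs~\eqref{83274nf63hfjf}, \eqref{cqvcqvcavcaqvcqvcavswcvqcvcw}. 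Pushing this combinatorial identification through -- confirming the contractibility and matching the weighted pair to Proposition~\ref{bvbwvwbwvbevbrbtobvbvp} -- is the delicate technical core, and is presumably why the argument fills all of Sections~\ref{SecAppendix} and~\ref{ExistDicDeg1}.
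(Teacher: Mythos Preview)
Your opening is fine: the count of dicriticals via \ref{dif4938998r984j20w2j}\eqref{hghshsghsghshgs} matches the paper, and disposing of the positive-type case by \ref{dif4938998r984j20w2j}\eqref{positive-char} is legitimate (the paper folds this into the contradiction hypothesis, but your split is equivalent).

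The genuine gap is exactly the one you flag, and the paper resolves it in a way you have not anticipated. Under the contradiction hypothesis (no $E_i$ is a section), the paper does \emph{not} look for an auxiliary bisection; instead it proves arithmetically that $r_m=2$, so that $E_m$ itself has degree~$2$. This is a substantial computation: from \eqref{dkjfkwjeflkjawlkaskd} one reads off that the multiplicity sequence $(r_1,\dots,r_m)$ is a concatenation of Euclidean blocks $S(a_i,b_i)$ followed by a constant tail $(a_{h+1})_e$, with $a_{h+1}=r_m$; the identities $d^2=\sum r_i^2$ and $3d-2=\sum r_i$ then force $a_{h+1}\mid d$ and $a_{h+1}\mid 2$, hence $r_m=2$. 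A further divisibility argument shows $e>0$, so $E_{m-1}$ is the unique neighbour of $E_m$ among the $E_i$, and all $E_1,\dots,E_{m-1}$ lie in a single singular fibre $F$ of $\Lambda_m$. A separate argument (comparing multiplicity sequences of $C$ and of a hypothetical component $L_0$) shows every fibre other than $F$ is integral.

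With this in hand, the paper's triple $(Y_0,D,L)$ is quite different from yours. One contracts $S_m$ to a Hirzebruch surface $\nagata$ via Gizatullin (choosing any section $\Sigma$); then $Y_0=\nagata$, $D=\rho(E_m)$, and $L=\rho(E_j)$ is the image of the unique component of $F$ meeting $\Sigma$, a genuine fibre of $\bbL$. The chain in (iii) is the factorisation of $\rho:S_m\to\nagata$, which is a chain because $\exc(\rho)$ has a single $(-1)$-component $\Gamma$. The divisor $D_N\cup L_N\cup G_1\cup\cdots\cup G_{N-1}$ is then exactly $E_1\cup\cdots\cup E_m$, whose contractibility is free: it is the exceptional locus of $\pi:S_m\to\proj^2$. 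Your proposed setup ($Y_0=S_m$, $L=C_m$, chain $S_m\leftarrow\cdots\leftarrow S_N$) would instead require $D_N+C_N+E_{m+1}+\cdots+E_{N-1}$ to be contractible on $S_N$, and there is no evident reason for that; the paper's choice makes condition~(iii) automatic.
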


The fact that $\Lambda_C$ has either one or two dicriticals easily follows
from \ref{dif4938998r984j20w2j}\eqref{hghshsghsghshgs}; the real contents of 
the theorem is the claim that there exists a dicritical of degree $1$.

The proof of the Theorem makes use of Proposition~\ref{656758765876dfkasbkjgfakh67664} 
(see the last sentence of the proof).
The following notation is also needed:

\begin{parag} \label{jcjcjucjcudcdujchc}
Let $(a,b) \in \Integ^2$ be such that $\min(a,b) \ge 1$.
Consider the Euclidean algorithm of $(a,b)$:
\begin{align*}
x_0 &= q_1 x_1 + x_2 \\
&\vdots \\
x_{p-2} &= q_{p-1} x_{p-1} + x_{p} \\
x_{p-1} &= q_p x_p
\end{align*}
where $x_0 = b$, $x_1 = a$, all $x_i$ and $q_i$ are positive integers and
$x_1 > \cdots > x_{p} \ge 1$ (so that $\gcd(a,b) = x_p$).
We define the finite sequence $S(a,b)$ by
$$
S(a,b) = (
\underbrace{x_1, \dots, x_1}_{\text{$q_1$ times}}, \dots, 
\underbrace{x_{p-1}, \dots, x_{p-1}}_{\text{$q_{p-1}$ times}},
\underbrace{x_{p}, \dots, x_{p}}_{\text{$q_{p}$ times}} ).
$$
Note that $S(a,b) = S(b,a)$.
It is well known and easy to verify that if we change the notation to
$S(a,b) = (r_1, r_2,  \dots, r_n)$ then
\begin{equation} \label {hggfqfqgfqqfdqfqddsqds}
\sum_{i=1}^n r_i  = a + b - \gcd(a,b) 
\text{\qquad and \qquad}
\sum_{i=1}^n r_i^2 = ab.
\end{equation}
\end{parag}

\begin{proof}[Proof of Theorem~\ref{kjfkjeopozxpozx}]
Let $C \subset \proj^2$ be a unicuspidal rational curve of nonnegative type, with distinguished point $P$.
Let the notation be as in \ref{sdopfiuqpweijdfklsmdn897823487} and \ref{dif4938998r984j20w2j},
and note that $\Lambda_m$ is a $\proj^1$-ruling by \ref{dif4938998r984j20w2j}(c).
The dicriticals of $\Lambda_C$ are the $E_i \subset S_m$ which are horizontal, i.e.,
which are not included in the support of an element of $\Lambda_m$.
So, by \ref{dif4938998r984j20w2j}\eqref{hghshsghsghshgs}, $\Lambda_C$ has either one or two dicriticals.
To prove that at least one dicritical has degree $1$, we have to
show that there exists $i \in \{ 1, \dots, m \}$
such that $E_i$ is a section of~$\Lambda_m$, i.e., $(E_i \cdot D)_{S_m} = 1$ for all $D \in \Lambda_m$.
Note that $\Lambda_m$ does have a section by Gizatullin's Theorem~\ref{jhdhsgghgaqaqoaiaaqo};
however, we don't know a priori whether a section can be found among the $E_i$.
Proceeding by contradiction, we assume that no $E_i$ is a section
of $\Lambda_m$. As $C_m \in \Lambda_m$ by \ref{dif4938998r984j20w2j}(b), it follows that
\begin{equation} \label{bvbvbvuubviubivubivubiu}
\text{for all $i \in \{ 1, \dots, m \}$, \quad
$E_i \cdot C_m \neq 1$ (in $S_m$)}.
\end{equation}
Then in $S_m$ we have
\begin{equation} \label{dkjfkwjeflkjawlkaskd}
\text{$E_m \cdot C_m > 1$ and for all $i < m$ we have 
$E_i \cap C_m = \emptyset$.}
\end{equation}
Indeed, $E_m \cdot C_m = e_{P_m}(C_{m-1} ) \ge1$ and 
\eqref{bvbvbvuubviubivubivubiu} implies that the inequality is strict.
If for some $i<m$ we have $E_i \cap C_m \neq \emptyset$ then
the fact that $E_i \cap C_m = \{ P_{m+1} \} = E_m \cap C_m$ implies
that $\min( E_i \cdot C_m,  E_m \cdot C_m ) = e_{ P_{m+1}}( C_m ) = 1$,
which contradicts \eqref{bvbvbvuubviubivubivubiu}.
So \eqref{dkjfkwjeflkjawlkaskd} is true.

Consider the multiplicity sequence
$(r_1, \dots, r_m)$ where $r_i = e_{P_i}(C_{i-1}) = (E_i \cdot C_i)_{S_i}$, and note that 
$$
r_m > 1
$$
by the first part of \eqref{dkjfkwjeflkjawlkaskd}.
Let $d = \deg(C)$.  As $C_m^2 = 0$ and $C_m \isom \proj^1$, we have
$0 = C_m^2 = C_0^2 - \sum_{i=1}^m r_i^2 = d^2 - \sum_{i=1}^m r_i^2$
and (by the genus formula) $(d-1)(d-2) = \sum_{i=1}^m r_i(r_i-1)$.
It follows that
\begin{equation} \label {cycuycucycttctcrc4c3c2}
d^2 = \sum_{i=1}^m r_i^2
\quad \text{and} \quad
3d - 2 = \sum_{i=1}^m r_i .
\end{equation}

Note that $(r_1, \dots, r_m)$ cannot be a constant sequence
$(a, \dots, a)$ because equations~\eqref{cycuycucycttctcrc4c3c2} would
then read $d^2 = ma^2$ and $3d-2 = ma$,
and these have no solution in integers with $a>1$.
We point out that $m\ge2$, for otherwise $(r_1, \dots, r_m)$ would be constant.
From the second part of \eqref{dkjfkwjeflkjawlkaskd}
and the fact that $(r_1, \dots, r_m)$ is not constant,
we deduce that $(r_1, \dots, r_m)$ has the following description:
there exist $(a_1, b_1), \dots, (a_h, b_h) \in \Integ^2$
(for some $h \ge 1$) such that
\begin{itemize}

\item $\min( a_i, b_i ) \ge 1$ for all $i \in \{1, \dots, h\}$

\item $a_{i+1} = \gcd(a_i, b_i)$ for all $i \in \{1, \dots, h-1\}$

\item $a_1 > \cdots > a_h > a_{h+1}$,
where we define $a_{h+1} = \gcd(a_h, b_h)$

\item $(r_1, \dots, r_m) =
\big( S(a_1, b_1),\,  \dots,\,  S(a_h, b_h),\, (a_{h+1})_e  \big)$
for some $e\ge0$,
where each sequence $S(a_i, b_i)$ is defined as in \ref{jcjcjucjcudcdujchc}
and where $(a_{h+1})_e$ is the sequence $(a_{h+1}, \dots, a_{h+1} )$ where
$a_{h+1}$ occurs $e$ times.
\end{itemize}
By \ref{jcjcjucjcudcdujchc}, the last term of the sequence $S( a_h, b_h )$
is $\gcd(a_h, b_h) = a_{h+1}$; so $r_m = a_{h+1}$ holds when $e=0$,
and obviously it also holds when $e \neq 0$.
So 
$$
a_{h+1} = r_m > 1
$$
in all cases.
By~\eqref{cycuycucycttctcrc4c3c2} and \eqref{hggfqfqgfqqfdqfqddsqds}, 
$$
d^2 = \sum_{i=1}^h a_i b_i + e a_{h+1}^2
$$
and since $a_{h+1}$ divides each $a_i$ and each $b_i$ it follows that
$a_{h+1}^2 \mid d^2$ and hence that $a_{h+1} \mid d$.
The other part of~\eqref{cycuycucycttctcrc4c3c2} gives
$$
3d-2 = \sum_{i=1}^h (a_i + b_i - a_{i+1}) + e a_{h+1}
= a_1 + (e-1) a_{h+1} + \sum_{i=1}^h b_i ,
$$
so $a_{h+1} \mid 2$ and consequently
\begin{equation} \label{cxfcsfxsdfcxfxd}
r_m = a_{h+1} = 2 .
\end{equation}
Define the integers $\delta = d/2$,
$\alpha_i = a_i/2$ ($1 \le i \le h+1$) and $\beta_i = b_i/2$ ($1 \le i \le h$).
Then $\alpha_{i+1} = \gcd( \alpha_i, \beta_i )$ for
all $i \in \{ 1, \dots, h \}$ and
$\alpha_1 > \cdots > \alpha_h > \alpha_{h+1} = 1$.
The above equations yield:
$$
\delta^2 = \sum_{i=1}^h \alpha_i \beta_i + e
\qquad
3 \delta = \alpha_1 + e + \sum_{i=1}^h \beta_i 
$$
Suppose that $p$ is a prime number which divides both $e$ and $\alpha_h$.
Then $\delta^2 \equiv 0 \pmod{p}$
and $3 \delta \equiv \beta_h \pmod{p}$,
so $p \mid \beta_h$ and consequently
$p \mid \gcd( \alpha_h, \beta_h ) = \alpha_{h+1} = 1$, which is absurd.
This contradiction shows that $\gcd( e, \alpha_h ) = 1$, and since 
$\alpha_h > 1$ we have shown that $e > 0$. This has the following consequence:
\begin{equation} \label{cvcvicoviocpcovocivpco}
\text{the only $i < m$ which satisfies
$E_i \cap E_m \neq \emptyset$ (in $S_m$) is $i = m-1$.}
\end{equation}
As $P_i \in E_{i-1}$ for all $i>1$ (cf.\  \eqref{dedsedwsewdwededswewd}),
we see in particular that $\bigcup_{i=1}^m E_i$ is connected;
by \eqref{cvcvicoviocpcovocivpco},
it follows that the subset $\Eeul = \bigcup_{i=1}^{m-1} E_i$ of $S_m$ is connected.
As each irreducible component of $\Eeul$ is vertical
by \eqref{dkjfkwjeflkjawlkaskd} and \ref{dif4938998r984j20w2j}\eqref{ghghghghtytr},
it follows that
\begin{equation}
\text{$\Eeul \subseteq \supp(F)$ for some $F \in \Lambda_m$}
\end{equation}
because distinct elements of $\Lambda_m$ have disjoint supports.
We claim:
\begin{equation} \label{kdjfkahsdufauwqekjnd}
\text{if $G \in \Lambda_m$ and $G \neq F$ then $G$ is irreducible
and reduced.}
\end{equation}
By contradiction, suppose that $G \in \Lambda_m \setminus \{F\}$ is
not irreducible and reduced. Then the support of $G$ is a union of at least
two curves (otherwise we would have $G=nG_0$ for some $n\ge2$ and some divisor
$G_0$ of $S_m$, and this would contradict the fact ~\ref{jhdhsgghgaqaqoaiaaqo} that $\Lambda_m$ has a section).
Let $L \subset S_m$ be an irreducible component of $G$.
As $E_m$ is horizontal and $\Eeul \subseteq \supp(F)$, $G$ does not contain
any $E_i$, so the image of $L$ in $S_0$
(via $\pi_1 \circ \cdots \circ \pi_m$) is a curve $L_0 \subset S_0$.
As $\emptyset \neq L_0 \cap C \subseteq \Bs( \Lambda_0 ) = \{ P_1 \}$,
we have $P_1 \in L_0$,  
so $L \cap ( \Eeul \cup E_m ) \neq \emptyset$;
as $L \cap \Eeul \subseteq \supp(G) \cap \supp(F) = \emptyset$, 
we have $L \cdot E_m > 0$ (for any irreducible component $L$
of $G$).  As $G \cdot E_m = C_m \cdot E_m = r_m = 2$,
and since $G$ has at least two irreducible components,
it follows that $G = L+M$ where $L,M$ are distinct prime divisors,
$L \cdot E_m = 1 = M \cdot E_m$
and $L \cap \Eeul = \emptyset = M \cap \Eeul$.
Moreover, Gizatullin's Theorem~\ref{jhdhsgghgaqaqoaiaaqo} implies that
$L \isom \proj^1 \isom M$ and that $L^2 = -1 = M^2$.

Let $L_i \subset S_i$ be the strict transform of $L_0$ on $S_i$
and note that $L_m = L$. By the above observations we have
$P_i \in L_{i-1}$ for all $i \in \{1, \dots, m \}$
and $L_m$ satisfies $L_m \isom \proj^1$ and $L_m^2 = -1$.
Define $\mathbf{m}(L_0) = (r_1', \dots, r_m' )$ by
$r_i' = e_{P_i}( L_{i-1} ) = (E_i \cdot L_i)_{S_i}$ and
let us compare $\mathbf{m}(L_0)$ with the sequence
$\mathbf{m}(C) = (r_1, \dots, r_m)$ which we have already considered.
We claim:
\begin{equation} \label {sdsdioidosidudifiisij}
(r_1, \dots, r_m) = 2(r_1', \dots, r_m').
\end{equation}
To see this, note that
\mbox{\scriptsize $
\left( \begin{array}{c}
(E_1 \cdot C_m)_{S_m} \\ \vdots \\ (E_m \cdot C_m)_{S_m}
\end{array} \right)$}
=
\mbox{\scriptsize $
\left( \begin{array}{c}
0 \\ \vdots \\ 0 \\ 2
\end{array} \right)$}
and
\mbox{\scriptsize $
\left( \begin{array}{c}
(E_1 \cdot L_m)_{S_m} \\ \vdots \\ (E_m \cdot L_m)_{S_m}
\end{array} \right)$}
=
\mbox{\scriptsize $
\left( \begin{array}{c}
0 \\ \vdots \\ 0 \\ 1
\end{array} \right)$},
so
\begin{equation}  \label {89r84hqawea}
\mbox{\scriptsize $
\left( \begin{array}{c}
(E_1 \cdot C_m)_{S_m} \\ \vdots \\ (E_m \cdot C_m)_{S_m}
\end{array} \right)$}
= 2
\mbox{\scriptsize $
\left( \begin{array}{c}
(E_1 \cdot L_m)_{S_m} \\ \vdots \\ (E_m \cdot L_m)_{S_m}
\end{array} \right).$}
\end{equation}
Observe that for each $j \in \{ 1, \dots, m \}$ there exists a linear
map $T_j : \Integ^m \to \Integ^j$ which is completely determined by the
values $e_{P_u}( E_v )$ and which has the following property:
given a curve $H_0 \subset S_0$ and its strict transform $H_j$ on $S_j$,
$$
T_j\ :\ \ 
\mbox{\scriptsize $\left( \begin{array}{c}
(E_1 \cdot H_m)_{S_m} \\ \vdots \\ (E_m \cdot H_m)_{S_m}
\end{array} \right)$}
\ \longmapsto\ 
\mbox{\scriptsize $\left( \begin{array}{c}
(E_1 \cdot H_j)_{S_j} \\ \vdots \\ (E_j \cdot H_j)_{S_j}
\end{array} \right)$}.
$$
By \eqref{89r84hqawea} and linearity of $T_j$ it follows that
$(E_i \cdot C_j)_{S_j} = 2 (E_i \cdot L_j)_{S_j}$
for all $i,j$ such that $1 \le i \le j \le m$, so in particular
$r_j = (E_j \cdot C_j)_{S_j} = 2 (E_j \cdot L_j)_{S_j}= 2 r_j'$
for all $j \in \{1, \dots,m\}$.  This proves \eqref{sdsdioidosidudifiisij}.

Let $d' = \deg( L_0 )$.
As $L_m \isom \proj^1$ and $L_m^2 = -1$,
$(d'-1)(d'-2) = \sum_{i=1}^m r_i' ( r_i' - 1 )$
and 
$(d')^2 = \sum_{i=1}^m (r_i')^2 - 1$, so
$3 d' = 1 + \sum_{i=1}^m r_i'$.
Doubling the last equation and using the second part of
\eqref{cycuycucycttctcrc4c3c2} gives
$$
6d' = 2 + \sum_{i=1}^m (2r_i') = 2 + \sum_{i=1}^m r_i = 3d,
$$
so $d = 2d'$.  Then
$$
d^2
= 4 (d')^2
= 4 \sum_{i=1}^m (r_i')^2 - 4 
= \sum_{i=1}^m r_i^2 - 4 
$$
contradicts \eqref{cycuycucycttctcrc4c3c2},
and hence \eqref{kdjfkahsdufauwqekjnd} is proved.

By Gizatullin's result~\ref{jhdhsgghgaqaqoaiaaqo} we may choose a section $\Sigma \subset S_m$
of $\Lambda_m$ and consider the birational
morphism $\rho : S_m \to \nagata$ whose exceptional locus is the union of
the curves in $S_m$ which are $\Lambda_m$-vertical and disjoint
from $\Sigma$.
Recall from the same result~\ref{jhdhsgghgaqaqoaiaaqo} that $\nagata$ is
one of the Nagata-Hirzebruch ruled surfaces and that
$\bbL = \rho_*( \Lambda_m )$ is a base-point-free pencil on $\nagata$
each of whose elements is a projective line.
We have $\exc( \rho ) \subseteq \supp F$ by \eqref{kdjfkahsdufauwqekjnd},
so the number of irreducible components of $\exc( \rho )$
is $1$ less than the number of irreducible components of $\supp F$
(as exactly one component of $F$ meets $\Sigma$).
Recall that the canonical divisors $K_{\proj^2}$ and $K_{\nagata}$ satisfy
$K_{\nagata}^2 = K_{\proj^2}^2 - 1$; so, consideration of 
$$
\proj^2 = S_0 \xleftarrow{\ \ \pi\ \ } S_m \xrightarrow{\ \ \rho\ \ }
\nagata
$$
(where $\pi = \pi_1 \circ \cdots \circ \pi_m$)
shows that $\rho$ contracts exactly $m-1$ curves, and hence that
$F$ has exactly $m$ irreducible components.
As $\Eeul \subseteq \supp(F)$, it follows that $\supp(F) = \Gamma \cup \Eeul$
for some curve $\Gamma \subset S_m$ such that $\Gamma \not\subseteq \Eeul$,
and where we must have $\Gamma^2 = -1$ since no component of $\Eeul$ has
that property.
We have $\Gamma \cap \Sigma = \emptyset$, for otherwise 
~\ref{jhdhsgghgaqaqoaiaaqo} would imply that $F$ has a $(-1)$-component other
than $\Gamma$, which is not the case.
Note that $\Gamma \neq E_m$ since $E_m$ is horizontal,
so $\Gamma$ is not an $E_i$.
It also follows that exactly one element $j \in \{ 1, \dots, m-1 \}$
is such that $\rho( E_j )$ is a curve; in fact $E_j$ is the unique component
of $F$ which meets $\Sigma$ and consequently $\rho( E_j )$ is an element
of $\bbL$.
Let us also observe that $\exc( \rho ) = \Gamma \cup \bigcup_{i \in I} E_i$,
where $I = \{1, \dots, m-1 \} \setminus \{ j \}$, so $\rho(E_m)$ is a curve.

Let us state some properties of the triple $(Y_0, D, L)$,
where we define $Y_0=\nagata$,  $D = \rho( E_m )$ and $L = \rho( E_j )$ 
(the symbol ``$L$'' was used in an earlier part of the proof, but we give it a new meaning here).
Obviously,

\begin{enumerate}

\item[(i)] {\it $Y_0$ is a nonsingular projective surface and
$D, L \subset Y_0$ are irreducible curves.}

\end{enumerate}

We also observe:

\begin{enumerate}

\item[(ii)]  {\it $L$ is nonsingular, $L^2 = 0$ and $D \cdot L = 2$.}

\end{enumerate}

Indeed, we have already noted that $L \in \bbL$,
so $L$ is nonsingular and $L^2=0$.
As $E_m \cdot C_m = 2$ and (since $\exc(\rho) \subseteq \supp F$) $\rho$ is an isomorphism in a neighborhood
of $C_m$, it follows that $D \cdot \rho( C_m ) = 2$;
noting that $\rho( C_m ) \in \bbL$, it follows that
$D \cdot L' = 2$ for any $L' \in \bbL$ and in particular (ii) is true.
Next we note:\footnote{See Definition \ref{ehwgehgwhepcopdopsdopsdo} for the definition of ``chain''.}

\begin{enumerate}

\item[(iii)] {\it There exists a chain 
$Y_0 \xleftarrow{ \sigma_1 } Y_1 \xleftarrow{ \sigma_2 } \cdots \xleftarrow{ \sigma_N } Y_{N}$
such that $N \ge1$ and, if $D_N \subset Y_N$, $L_N \subset Y_N$, and $G_i \subset Y_N$ denote respectively
the strict transforms of $D$, of $L$, and of the exceptional curve of $\sigma_i$, then:
\begin{itemize}

\item the subset $D_N \cup L_N \cup G_1 \cup \dots \cup G_{N-1}$
of $Y_N$ is the exceptional locus of a birational morphism $Y_N \to S$
where $S$ is a nonsingular projective surface;

\item $L_N^2 \neq -1$ in $Y_N$.

\end{itemize}}
\end{enumerate}

This is obtained from $\rho : S_m \to \nagata$ by changing the
notation:  let $N=m-1$ and factor $\rho$ as
$S_m = Y_N \xrightarrow{ \sigma_N } \cdots \xrightarrow{ \sigma_1 } 
Y_0 = \nagata$, where each $\sigma_i$ is a blowing-up at a point.
Just after \eqref{cycuycucycttctcrc4c3c2} we noted that $m \ge2$, so $N\ge1$.
The fact that the blowing-up sequence $(\sigma_1, \dots, \sigma_N)$
is a chain follows from the fact that
$\exc( \rho ) = \Gamma \cup \bigcup_{i \in I} E_i$
(where $I = \{1, \dots, m-1 \} \setminus \{ j \}$)
has exactly one $(-1)$-component.
We have $G_N = \Gamma$, $D_N = E_m$, and $L_N = E_j$, so in particular $L_N^2 \neq -1$.
The subset $D_N \cup L_N \cup G_1 \cup \dots \cup G_{N-1}$
of $Y_N = S_m$ is equal to $\bigcup_{i=1}^m E_i$, which is the exceptional 
locus of the birational morphism $\pi_1 \circ \dots \circ \pi_m$.
So (iii) is true.

By Proposition~\ref{656758765876dfkasbkjgfakh67664},
no triple $(Y_0,D, L)$ satisfies (i--iii).
This contradiction completes the proof of the Theorem.
\end{proof}


\begin{thebibliography}{99}



\bibitem{Dai:Chains}
Daigle, D.: {Classification of linear weighted graphs up to blowing-up and blowing-down},
\emph{Canad.\ J.\ Math.} {\bf 60} (2008), 64--87.


\bibitem{Dai_Mell_1}  Daigle, D. \and Melle-Hern\'andez, A.:
\emph{Linear systems of rational curves on rational surfaces}, preprint arXiv:1202.5889,  to appear.

\bibitem{Fenske1} Fenske, T.:  `Rational cuspidal plane curves of type $(d,d-4)$
with $\chi(\Theta\sb V\langle D\rangle)\leq 0$',  {\em Manuscripta
Math.\ }98  (1999) 511--527.

\bibitem{FLMN:London2006} { Fern\'andez de Bobadilla, J., Luengo, I.,
Melle-Hern\'andez, A. \and N\'emethi, A.}: On rational cuspidal projective plane curves,
 {\em Proc. London Math. Soc.} (3) {\bf 92}  (2006), 99--138.

\bibitem{FLMN:Marseille2005}
{ Fern\'andez de Bobadilla, J., Luengo, I.,
Melle-Hern\'andez, A. \and N\'emethi, A.}:
{On rational cuspidal plane curves, open surfaces and local singularities},
in \emph{Singularity theory, Proceedings of the 2005 Marseille Singularity School},
{World Sci. Publ., Hackensack, NJ}, (2007), 411--442.



\bibitem{Giz70} Gizatullin, M. H.:
Affine surfaces that can be augmented by a nonsingular rational curve,
I\emph{zv. Akad. Nauk SSSR Ser. Mat.} {\bf 34} (1970), 778--802.




\bibitem{Miy_RatUnirat}
Miyanishi, M.:
{\em Curves on rational and unirational surfaces}, Tata Inst. Fund. Res. Lectures 
on Math. and Phys., vol. 60, Tata Institute of Fundamental Research, Bombay, 1988.  

\bibitem{MiS} Miyanishi, M.  \and Sugie, T.: 
On a projective plane curve whose complement has logarithmic Kodaira dimension $-\infty$,
{\em Osaka J. Math. } {\bf 18} (1981), 1--11.


\bibitem{Or} Orevkov, S.Yu.: On rational cuspidal curves, I. Sharp
estimate for degree via multiplicity, {\em Math. Ann.} {\bf 324} (2002),
657--673.


\bibitem{KeitaTono} Tono, K.: On rational unicuspidal plane curves with
logarithmic Kodaira dimension one, {\em RIMS-K\^oky\^uroku} {\bf 1233}
(2001), 48-56. 


\bibitem{Tsu1} Tsunoda, Sh.: The complements of projective
plane curves, {\em RIMS-K\^oky\^uroku} {\bf 446} (1981), 48--56.




\end{thebibliography}
\end{document}